\newcommand{\bea}{\begin{eqnarray*}}
\newcommand{\eea}{\end{eqnarray*}}
\newcommand{\bm}{\begin{pmatrix}}
\newcommand{\fm}{\end{pmatrix}}
\newcommand\Z{\mathbb Z}
\newcommand\R{\mathbb R}
\newcommand{\ra}{\rightarrow}
\newcommand{\G}{\Gamma}
\newtheorem{theorem}{Theorem}[section]
\newtheorem{proposition}[theorem]{Proposition}
\newtheorem{prop}[theorem]{Proposition}
\newtheorem{lemma}[theorem]{Lemma}
\newtheorem{remark}[theorem]{Remark}
\newtheorem{Le}[theorem]{Lemma}
\newtheorem{conjecture}[theorem]{Conjecture}
\newtheorem{corollary}[theorem]{Corollary}
\newtheorem{definition}[theorem]{Definition}
\title{A Tukia-type theorem   for nilpotent Lie groups and quasi-isometric rigidity of
    solvable groups\\ }
\author{Tullia Dymarz}
\address{Department of Mathematics,
University of Wisconsin-Madison, 480 Lincoln Drive,  Madison, WI 53706} \email{dymarz@math.wisc.edu}
\author{David Fisher}
\address{Department of Mathematics,
 Rice  University,  6100 Main Street
Houston, TX 77005}
   \email{davidfisher@rice.edu}
\author{Xiangdong Xie}
\address{Department of Mathematics and Statistics,
Bowling Green State University,
Bowling Green, OH 43403} \email{xiex@bgsu.edu}
\thanks{}
\keywords{uniform quasisimilarity groups,  quasi-isometric rigidity, nilpotent Lie groups, solvable Lie groups}
\begin{document}
\maketitle

\begin{abstract}
In this paper  we  study uniform quasiconformal groups of  Carnot-by-Carnot groups. We show that they can be conjugated into conformal groups  provided the induced action on the space of  distinct  pairs is cocompact. Following the approach of Eskin-Fisher-Whyte \cite{EFW07} these results have applications  to quasi-isometric rigidity of  
 certain families of solvable groups.
\end{abstract}




\tableofcontents

%
%
%
%
%

\section{Introduction}

{\bf Motivation.} The motivation for this paper comes from Gromov's program of classifying finitely generated groups up to \emph{quasi-isometric equivalence}. A map $f:X \to Y$ between metric spaces is a quasi-isometry if  there are $A \geq 0$ and $L\geq 1$ such that  for all $x_1, x_2 \in X$
 $$ -A + d(x_1,x_2)/L \leq  d(f(x_1), f(x_2))  \leq L\ d(x_1,x_2)+A$$
 and for all $y \in Y$ there is $x \in X$ with $d(f(x),y) \leq A$. 
 In \cite{EFW07} Eskin-Fisher-Whyte announced quasi-isometric rigidty for lattices in $SOL=\R^2 \rtimes \R$, the simplest non-nilpotent unimodular solvable Lie group and in \cite{EF10} they outlined a conjecture and program for proving a quasi-isometric classification for all non-nilpotent polycyclic groups (or equivalently all unimodular non-nilpotent solvable Lie groups $G$). Along with the quasi-isometric classification of nilpotent groups this remains one of the most important open problems in Gromov's Program. 

The strategy outlined in \cite{EF10} has two main steps:
\begin{enumerate}
\item Understand the structure self quasi-isometries of the solvable Lie group $G$. 
\item  Use this structure to identify certain groups of self quasi-isometries of $G$  as conjugates of groups of isometries of a possibly different Lie group $\bar{G}$. 
\end{enumerate}

In the case of SOL the second step is proved in a short appendix to a paper by Farb-Mosher \cite{FM99}. In many other cases  this second step requires detailed quasiconformal analysis on a homogeneous group $N$ resulting in a so called \emph{Tukia-type} theorem (see below).  Recall that a homogeneous group is a nilpotent Lie group together with a dilation that induces a homogenous metric on $N$. Previous work has been in the case when either $N$ is abelian \cite{D10} or in the {Carnot} case 
\cite{DyX16} which is the case where the dilation is completely determined by the algebra of $N$.
In this paper we present the first case of understanding this analysis for non-abelian non-Carnot homogeneous groups. We call this case the \emph{Carnot-by-Carnot} case (Definition \ref{carnotbycarnot}).
This case has significant new challenges that involve subtle interactions between the algebra of the homogeneous group (coming from $N$ being non abelian) and the geometry of the homogeneous metric (coming from $N$ being non-Carnot). For this we introduce new analysis techniques that we expect will provide the foundation to prove the necessary results on more general homogeneous  Lie groups.  Once complete this program will establish the quasi-isometric classification of all \emph{SOL-like} solvable Lie groups (Definition \ref{SOL-like}).

Our main theorems are Theorem \ref{main-uniform}  and Theorem \ref{theorem:QIrigid} below.
Theorem \ref{theorem:QIrigid} is quasi-isometric rigidity for our new family of SOL-like groups while Theorem \ref{main-uniform}, whose proof is the bulk of this paper,
provides Step (2) for Theorem \ref{theorem:QIrigid}. Concrete examples of both Carnot-by-Carnot groups and lattices in the accompanying SOL-like groups can be found in appendices A and B respectively.



{\bf Tukia-type Theorems.}  
     A Tukia-type theorem asserts that a  group of self maps of a metric space $(X,d)$ that distort distances by a uniformly controlled amount must actually be, up to conjugation, a lot more rigid. In this paper,
 our uniformly controlled maps will be \emph{quasi-similarities}, i.e.   bijections $f:(X,d) \to (X,d)$ such that
 \begin{equation}\label{qsim-eqn} K_f^- d(x,y) \leq d(f(x), f(y)) \leq  K_f^+ d(x,y) \quad \textrm{ for all } x,y \in X\end{equation}
 where $K_f^+/K_f^-\leq K$ and $K$ is a uniform  constant  over all group elements. A more common term for a map satisfying Equation   (\ref{qsim-eqn})
       is \emph{biLipschitz} but then  $K_f^-$ is usually defined to be the inverse of $K_f^+$ so uniformity in our sense is harder to describe.
  The goal conclusion will be that up to conjugation by a  biLipschitz map, the group must be acting by \emph{similarities}, that is maps where $K_f^-=K_f^+$.  Such a theorem is always true for example when $X= \R$ \cite{FM99} but in general, additional hypotheses on the size of the group are often needed.

 Tukia's original theorem \cite{T86} was proved for groups of uniform quasiconformal maps of the $n$-sphere for $n\geq 2$, showing that up to conjugation these groups act by conformal maps.  For $n=2$ this was also proved by Sullivan \cite{S78}. When $n\geq 3$ the additional hypothesis that was needed was that every point be a \emph{radial limit point} (see Definition \ref{radial:defn}) under the action. We will also need such a hypothesis in  our case.
 Tukia's theorem was first generalized in \cite{Ch96} to quasiconformal groups of the real Heisenberg group equipped with a Carnot-Caratheodory metric. A similar generalization holds for all Carnot groups endowed with a Carnot-Caratheodory metric (see  Theorem \ref{tukia}) but in the conclusion the conformal action may be with respect to a different Carnot-Caratheodory metric.
  Other generalizations of Tukia's theorem for uniform quasisimilarity groups may be found in \cite{DyX16} where $N=\R^n$ with homogeneous distances that are not the standard Euclidean distances.
      This paper contains the first Tukia-type theorem   for non-abelian non-Carnot homogeneous groups.

 \begin{definition}\label{carnotbycarnot} Let  $N$ be a simply connected nilpotent Lie group with Lie algebra  $\mathfrak n$.  Let $D$  be  a diagonalizable
derivation of  $\mathfrak n$   with  positive eigenvalues. Let $\mathfrak w$ be the Lie sub-algebra generated by the eigenspace of $D$ associated with the smallest eigenvalue.
  {We say that $(N,D)$ is a \emph{Carnot-by-Carnot} group if the following conditions are satisfied:
  \begin{enumerate}
   \item
  $\mathfrak w$ is a proper  ideal of $\mathfrak n$;
  \item   $\mathfrak n/\mathfrak w$ is a Carnot algebra;
  \item
   $D$ induces a derivation $\bar{D}$ on $\mathfrak n/\mathfrak w$ that is a multiple of the standard Carnot   derivation on $\mathfrak n/\mathfrak w$.
   \end{enumerate}  }
 \end{definition}

 We remark                      that     $\mathfrak w$   is itself a Carnot algebra, see Lemma \ref{derivation lemma}.
Carnot-by-Carnot groups  are abundant.
  The obvious examples are   given   by a direct product  of two Carnot groups (where the derivation is a product of   distinct multiples of the standard Carnot derivation    on the Lie algebras of the two factors).  More interesting examples are   provided  by central products and semi-direct products. Using Lie algebra cohomology one can also obtain Carnot-by-Carnot groups that are not semi-direct products.
  See  Appendix  \ref{nilexample} for more details.

  Borrowing  terminology from \cite{LDNG21}, we say   a
   distance   $d$  on $N$ is  \emph{$D$-homogeneous}    if it is left invariant,   induces the manifold topology on $N$ and such that
 $d(e^{tD}x, e^{tD}y)=e^t d(x, y)$ for all $x, y\in N$ and $t\in \mathbb R$, where $\{e^{tD}|t\in \mathbb R\}$  is the one parameter group of automorphisms  of $N$ generated by the derivation $D$.
 There are many $D$-homogeneous distances  $d$ on $N$ and   it is easy to see that any two
 $D$-homogeneous distances
  are pairwise biLipschitz equivalent.
  When we talk about a biLipschitz map $F$ of $N$  we mean that $F$ is biLipschitz  with respect to a  $D$-homogeneous distance on $N$. 
Note, however, that for two  $D$-homogeneous distances $d_1$, $d_2$ on $N$,
  a map $F: (N, d_1)\to (N, d_1)$ being a similarity does not imply that  $F: (N, d_2)\to (N, d_2)$  is a similarity.
     A   $D$-homogeneous distance $d_0$  on $N$ is   \emph{maximally symmetric} if for any
      $D$-homogeneous distances $d$ on $N$ there is a biLipschitz automorphism $\phi$ such that $\phi \text{Sim}(N,d)\phi^{-1}\subset  \text{Sim}(N,d_0)$, where
     $ \text{Sim}(N,d)$ denotes the group of similarities of $(N,d)$.
             We remark that  $N$ always admits  a    maximally symmetric $D$-homogeneous distance $d_0$, see Lemma 2.3, \cite{DFX}.

  \begin{theorem}\label{main-uniform}
Let  $(N,D)$ be a Carnot-by-Carnot group and  $\Gamma$ a
    group
with a  uniform quasisimilarity action of $N$  such that almost every point in $N$ is a radial   limit point.
  When $\dim(\mathfrak w)=1$, we further   assume  $\Gamma$ is  locally compact   amenable.
   Then there exists a biLipschitz map $F_0$ of $N$
      such  that   $F_0\Gamma F_0^{-1}\subset \text{Sim}(N,d_0)$,
        where $d_0$ is a fixed maximally symmetric $D$-homogeneous distance on $N$.
\end{theorem}

 We remark that  a version of Theorem \ref{main-uniform} for uniform quasiconformal groups is not needed since by \cite{CP17}  when
  $N$ is Carnot-by-Carnot,
each quasiconformal map of
 $N\cup \{\infty\}$ fixes $\infty$  and a  uniform quasiconformal group  of $N\cup \{\infty\}$  is the same thing as a uniform quasisimilarity group of $N$.
 Theorem  \ref{main-uniform} also holds for Carnot groups but in this case
  all that is required is running the proof of Theorem \ref{tukia}  
  with the additional assumption that all maps in question  fix $\infty$ and are biLipschitz.
  In a general homogeneous group there are two sources of metric distortion, one from brackets and one
from differing rates of contraction from the derivation $D$.  The Carnot case is exactly the one
where these two distortions agree.  A major difficulty in going beyond the Carnot
case [DFX] and the abelian case [Dy] comes when it is difficult to separate the two sources
of distortion.
The  case of general $(N,D)$ involves more phenomena
of this kind not already seen in the Carnot-by-Carnot case.
In the case of products of Carnot groups this is easier than in the general
Carnot-by-Carnot case. In fact
 we can easily prove a similar theorem for uniform quasisimilarity groups of $(N,D)$ with  $N=\prod_i N_i$  where  each $N_i$ is Carnot  with Lie algebra $\mathfrak n_i$ and $D\vert_{\mathfrak n_i}$ is an increasing multiple of a Carnot derivation (see Theorem \ref{product of Carnot}).


The amenability condition when $\dim(\mathfrak w)=1$   is needed in the proof in order to apply Day's fixed point theorem which states that affine actions of amenable groups on compact convex subsets of locally convex topological  vector spaces have fixed points.
We actually also use Day's fixed point theorem in  the case $\dim(\mathfrak w)\ge 2$
   but in that  case we are able to deduce the amenability of the acting group.
This extra amenability condition does not affect the application of our theorem to quasi-isometric rigidity since amenability of discrete groups is preserved by
quasi-isometries.   Nevertheless it would be interesting to know if this condition can be removed.
The proof of  Theorem \ref{main-uniform} takes up the bulk of the paper and so we  outline it in more detail in the next section.
%


{\bf Quasi-isometric rigidity of solvable groups.} The power of a Tukia-type theorem in geometric group theory is that it can often be used to prove quasi-isometric rigidity results.  This is done
through the notion of quasi-action.
A quasi-action of a group $\Gamma$ on a metric space $X$ is an assignment $ \gamma \mapsto G_\gamma$ where $G_\gamma$ is a self quasi-isometry of $X$ such that
\begin{enumerate}
\item  $G_\gamma$ is an $(L,A)$ quasi-isometry where $L$ and $A$ are uniform over all $\gamma \in \Gamma$.
\item  $G_{\gamma\eta}$ and  $G_\gamma \circ G_\eta$ are  bounded distance apart in the sup norm, uniformly over all $\gamma, \eta \in \Gamma$.
\item $G_{Id}$ is bounded distance from the identity map on $X$.
\end{enumerate}
A quasi-action is \emph{cobounded} if there is a bounded set $S \subset X$ such that for any $x \in X$ there is $\gamma \in \Gamma$ such that $ G_\gamma(x) \in S$.

The standard example of a cobounded quasi-action arises when $\Gamma$ is a  group with   a left invariant  metric (for example, a  finitely generated group  with a word metric or a Lie group with a left invariant Riemannian metric) and $\phi: \Gamma \to X$ is a quasi-isometry with coarse inverse $\bar{\phi}$. Then $\gamma \mapsto
\phi\circ L_\gamma \circ \bar{\phi}$   defines a cobounded quasi-action   of $\Gamma$ on $X$, where $L_\gamma$ is the left translation of $\Gamma$ by $\gamma$.



 In the case that $X$ is Gromov hyperbolic,  quasi-isometries of $X$ correspond to quasiconformal maps (quasisymmetric maps, to be more precise) of the ideal boundary $\partial X$,  and quasi-actions on $X$ correspond to uniform quasiconformal   actions on  $\partial X$.
   Furthermore, with suitable choices of metrics on $X$ and $\partial X$, isometries of $X$  often correspond  to conformal maps of  $\partial X$   or similarities  of a one-point complement of  $\partial X$.
 As a result,   a group $\Gamma$ quasi-acting on a Gromov hyperbolic space $X$ induces a
 uniform quasiconformal   action on  $\partial X$;  if
 a Tukia-type theorem   is available, then this uniform quasiconformal   action is conjugate by a quasiconformal map of $\partial X$ to a conformal action of $\Gamma$ on
 $\partial X$, which implies the original quasi-action is
   conjugate by a quasi-isometry  to an isometric  action on $X$.

   We now apply the outline of the proceeding paragraph in the case when $X$ is a so called \emph{Heintze group}:   Given a simply connected nilpotent Lie group $N$ with Lie algebra $\mathfrak n$ and  a derivation $D$  of $\mathfrak n$, one  can form the semi-direct product $S=N\rtimes_D \mathbb R$, where the action of $\mathbb R$ on $N$ is by the automorphisms
    $e^{tD}$ of $N$ generated by the derivation $D$.  Such a group $S$ is called a Heintze group  if  the eigenvalues of $D$ all have positive real parts.
   For example  when $(N,D)$ is  Carnot-by-Carnot the associated extension $S$ is a Heintze group.
     By \cite{H74},
       Heintze groups    are  Gromov hyperbolic.
    By applying our Tukia-type theorem we get the following (a similar statement holds when $N$ is  a  Carnot  group, see  Corollary 1.2 in \cite{DFX}):

   \begin{theorem} \label{NCHM}
  Let $(N,D)$ be Carnot-by-Carnot and $S=N\rtimes_D \R$  the associated Heintze group.
  Then there is a left invariant Riemannian metric $g_0$ on $S$ with the following property.
 If $\Gamma$ is a
      group that quasi-acts coboundedly  on  $S$ (and when $\dim(W)=1$, we further   assume  $\Gamma$ is locally compact amenable) then
   this  quasi-action is quasi-conjugate to an isometric action of $\Gamma$  on
     $(S, g_0)$.   
\end{theorem}


   More generally, Tukia-type theorems have been used in the proofs of quasi-isometric rigidity of  certain solvable Lie groups that are not themselves negatively curved but instead are foliated by negatively curved spaces. Most notably Eskin-Fisher-Whyte's quasi-isometric rigidity theorem for SOL \cite{EFW07} uses the fact that SOL is foliated by hyperbolic planes $\mathbb{H}^2$ whose visual boundary can be identified with $S^1 \simeq \R\cup \{\infty\}$. Recall that $SOL\simeq \R^2 \rtimes \R$ where the action of $\R$ on $\R^2$ is given by $e^{tA}$ where $A$ is the diagonal matrix with $1$ and $-1$ on the diagonal and a left invariant metric can be given by $ds^2= e^{-2t}dx^2 +
   e^{2t} dy^2 + dt^2$.
 There are two foliations by hyperbolic planes, given by  fixing either the $x$ or the $y$ coordinate.    As a consequence of Eskin-Fisher-Whyte's coarse differentiation theorem on the structure of quasi-isometries of SOL, a quasi-action on SOL  induces two quasi-actions on the hyperbolic plane (and hence two quasisimilarity actions on $\R$), one for each foliation.


\begin{definition}\label{SOL-like}
We call a group \emph{SOL-like} if  it has the form $(N_1 \times N_2) \rtimes \R$,      where
     $N_i$ is  a simply connected nilpotent Lie group  with Lie algebra $\mathfrak n_i$
         and  the action of $\R$ on $N_1 \times N_2$   is given by $e^{tD}$ with  $D$  the derivation of
   $\mathfrak n_1\times \mathfrak n_2$
  given by $D:=( D_1,-D_2 )$   and  $D_i$ a  derivation  of
     $\mathfrak n_i$  whose  eigenvalues  have positive real part.
  \end{definition}   
The negatively curved spaces foliating $(N_1 \times N_2) \rtimes \R$ are simply $N_i \rtimes \R$ where the action of $\R$ is defined by the derivation $D_i$.
 The visual boundary of $N_i \rtimes \R$ is simply $N_i \cup \{ \infty\}$ but in the context   of  quasi-isometries of SOL-like groups the point $\infty$ is always preserved. The structure of quasi-isometries of SOL-like groups follows from Eskin-Fisher-Whyte's coarse differentiation techniques \cite{EFW07,EF10} (see also \cite{Fe22}  for a
  detailed treatment of the ``non-unimodular''  case  in  a more general setting).    A consequence of  Eskin-Fisher-Whyte's argument  is that any quasi-action of a group $\Gamma$ on $(N_1 \times N_2) \rtimes \R$
  induces a quasi-action of $\Gamma$  on $N_i \rtimes \R$  for $i=1, 2$.
   A Tukia-type theorem (if available)  can then be applied  to get isometric actions of $\Gamma$ on
 $N_1 \rtimes \R$   and $N_2 \rtimes \R$.  One  then  tries
  to show that the two isometric actions
 combine to give an isometric action of $\Gamma$ on $(N_1 \times N_2) \rtimes \R$.
  Using this strategy and our Tukia-type theorem  we get:





\begin{theorem}\label{theorem:QIrigid}
Let $S=(N_1 \times N_2) \rtimes \R$  be  a  SOL-like group,
  where each $(N_i, D_i)$ is either   Carnot  or
Carnot-by-Carnot.
In the case  $(N_i, D_i)$ is Carnot-by-Carnot with  $\dim(\mathfrak w_i)=1$ for  at least one $i$, where  $\mathfrak w_i$ is the Lie sub-algebra of $\mathfrak n_i$ generated by the eigenspace of  the smallest eigenvalue of $D_i$, we further assume  that $\Gamma$ is amenable or that  the isometry group $\text{Isom}(S,g)$ admits a  uniform lattice
  for some  left invariant Riemannian metric $g$  on $S$.
 There is a  left invariant Riemannian metric $g_0$ on $S$ with the following property.  Suppose a finitely generated group $\Gamma$ is   quasi-isometric  to $S$.       Then  $\Gamma$ is, up to finite index and finite kernel, a lattice in
    $\text{Isom}(S,g_0)$.   
\end{theorem}


Note that any two finitely generated groups that differ by finite index or finite kernel are automatically quasi-isometric so it is impossible  to drop these conditions.
The assumption that $\text{Isom}(S,g)$ admits a  uniform lattice implies that $\Gamma$ is amenable and so we can apply Theorem \ref{main-uniform}:   by Theorem 3 in \cite{W00},
a  uniform lattice  $\Gamma_0$ in $\text{Isom}(S,g)$   is virtually solvable; on the other hand,  $\Gamma$ is quasi-isometric to $S$ and so is also  quasi-isometric to $\Gamma_0$; since  amenability is a quasi-isometry invariant among finitely generated groups it follows that $\Gamma$ is amenable.
   In  Appendix  \ref{lattice}   we give examples of SOL-like groups  $S$ that admit lattices (in this case $\text{Isom}(S,g)$  admit uniform lattices for any  left invariant Riemannian metric $g$). The proof of Theorem \ref{theorem:QIrigid} can be found in Section \ref{cdiff-sec}.


{Theorem \ref{theorem:QIrigid}   is related to Conjecture 1.2.2(2)  in \cite{LDPX22}.    }
Theorem \ref{theorem:QIrigid} also has consequences for SOL-like groups  {whose isometry groups do not admit  lattices.}

 \begin{conjecture}
   Let $S$ be a  SOL-like group.  Suppose
$\text{Isom}(S,g)$  does not admit a  uniform lattice for any
left invariant Riemannian metric $g$  on $S$.     Then $S$ is not quasi-isometric  to  any finitely generated group.
\end{conjecture}

 By Theorem \ref{theorem:QIrigid} this conjecture is true when both $N_i$ are either Carnot or Carnot-by-Carnot with $\dim(\mathfrak w_i) \geq 2$. In case $\dim(\mathfrak w_i)=1$ we can only conclude that $S$ is not quasi-isometric to any amenable finitely generated group.





{\bf Quasi-isometric  classification of solvable Lie groups.}
An open problem  concerning the large scale geometry of Lie groups  is the quasi-isometry  classification of simply connected solvable Lie groups.    A simply connected solvable Lie group $S$ with Lie algebra $\mathfrak s$  is of   \emph{real type} (also called completely solvable, or split-solvable) if all   eigenvalues of
   $\text{ad}(X)$   are  real   for all $X\in \mathfrak s$.    By  Theorem 4.21 in \cite{CKLGO21}
     for any simply connected solvable Lie group $S$,  there is a unique simply connected solvable Lie group $S_{\mathbb R}$  of real type,
       so called   \emph{real shadow}  of $S$,      such that  $S$ and
    $S_{\mathbb R}$  can be made isometric to each other.   Recall that two Lie groups
    $S_1$ and $S_2$ can be made isometric to each other if
     there are left invariant  Riemannian metrics $g_i$  on $S_i$ ($i=1, 2$)     such that $(S_1, g_1)$ and $(S_2, g_2)$ are isometric.
    Notice that this result also implies  two simply connected solvable Lie groups $S_1$ and $S_2$  of real type  can be made isometric if and only if they are isomorphic.  It follows that to classify simply connected solvable Lie groups up to quasi-isometry, it suffices to restrict attention to the class of simply connected solvable Lie groups of real type.
     Here is a  conjecture  by   Y. Cornulier.

  \begin{conjecture} (\cite{dC18}, Conjecture 19.113)
     Two simply connected solvable Lie groups $S_1$ and $S_2$  of real type
      are  quasi-isometric if and only if they are isomorphic.
     \end{conjecture}

   The following two results
   confirm the above conjecture    for
     two  classes   of   solvable Lie groups.

  \begin{theorem}\label{QIclassification}
 Let $\mathcal S$ be the class of SOL-like groups $S=(N_1\times N_2)\rtimes \mathbb R$ where $(N_i, D_i)$  ($i=1, 2$) is either Carnot  or Carnot-by-Carnot.  Then
  two members  $S_1, S_2\in \mathcal S$ are quasi-isometric if and only if they are isomorphic.

 \end{theorem}

In the special case when $S$ is non-unimodular and $(N_i, D_i)$  ($i=1, 2$) is Carnot,
 Theorem \ref{QIclassification}  is the same as Theorem C , \cite{Fe22}.  Similarly Theorem \ref{QIC-CarnotbyCarnot}  follows from Pansu's differentiability
 theorem in the case when $(N, D)$ is  Carnot.


\begin{theorem}\label{QIC-CarnotbyCarnot}
 Let $\mathcal S$ be the class of simply connected solvable Lie groups of the form
  $S=N\rtimes_D \mathbb R$, where $(N, D)$ is  Carnot  or Carnot-by-Carnot.   Then two members $S_1, S_2\in \mathcal S$ are quasi-isometric if and only if they are isomorphic.

 \end{theorem}

As indicated above, a quasi-isometry between $S_1, S_2$ induces a quasi-action of $S_1$ on $S_2$. Our Tukia-type  theorem and Eskin-Fisher-Whyte's coarse differentiation method  then imply this quasi-action can be conjugated into an isometric action of $S_1$  on $S_2$.
 From here it is not too difficulty to show that $S_1$ and $S_2$ are isomorphic.

{\bf Structure of the paper:}    We collect
the preliminaries  in Section \ref{prelim-sec}.  The proofs of the quasi-isometric rigidity theorems are covered in Section \ref{cdiff-sec}.
The proof of Theorem \ref{main-uniform}  spans from  Section \ref{compatible}  to Section \ref{n/w}, see Section \ref{outline}  for  an  outline of this proof.   
 Appendix  \ref{nilexample}    provides examples of Carnot-by-Carnot groups.
 In   Appendix \ref{lattice}  we review some  examples of SOL-like groups that admit  lattices.   The content of    Appendix C is  the proof of   Lemma \ref{sj}.

{\bf{Acknowledgments.}}
T. Dymarz was supported by NSF    career  Grant 1552234.   D. Fisher was supported by NSF grants DMS-1906107 and DMS-2246556.
 X. Xie  was  supported by Simons Foundation grant \#315130. X. Xie
   would  like to thank  the department of mathematics,   University of Wisconsin at Madison   for financial support  during his visit there   in February 2020. We would also like to thank Dave Witte Morris for useful conversations  {and thank  Tom Ferragut and Gabriel Pallier for thoughtful comments on an earlier version.}

%

\section{Outline of main theorem} \label{outline}
 In this section we outline the proof of Theorem \ref{main-uniform}. 

Let $\G$ be a group acting on a Carnot-by-Carnot group $(N,D)$  by uniform quasisimilarities with respect to a $D$-homogeneous distance
 that satisfies the conditions of Theorem \ref{main-uniform}.    We use the notation $W$, $\mathfrak w$ and $\bar D$ from Definition \ref{carnotbycarnot} for  a  Carnot-by-Carnot group.
Without loss of generality we can assume that $D$ scales by $1$ on the first layer of $\mathfrak{n}$ (equivalently the first layer of $\mathfrak{w}$) and by $\alpha$ on the first layer of $\mathfrak{n}/\mathfrak{w}$. For example if $N\simeq \R^n$ then $D$ is a diagonal matrix with diagonal entries  $1$ and $\alpha$.
 (see Section \ref{prelim-sec} for details).


   A biLipschitz map  $f$ of $N$ is a  \emph{fiber similarity  map}  if  there exist  Carnot-Caratheodory metrics   $\bar d_{CC}$ and $d_{CC}$ on $N/W$ and  $W$ respectively
  such that       \begin{enumerate}
 \item   $f$  induces a  similarity of  $(N/W, \bar d_{CC})  $,
\item    $f$   acts by similarities along cosets of $W$, i.e. for each $g \in N$   
  the map
     $L_{f(g)^{-1}}\circ f \circ L_g : (W, d_{CC}) \to (W, d_{CC})$ is a similarity.
     \end{enumerate}
  We say  $\Gamma$ is  a \emph{fiber similarity group}  if  there exist  Carnot-Caratheodory metrics   $\bar d_{CC}$ and $d_{CC}$ on $N/W$ and  $W$ respectively
  such that   every element $\gamma\in \Gamma$   is a fiber similarity map with respect to  $\bar d_{CC}$ and $d_{CC}$.
   When $\dim(N/W)\ge 2, \dim(W)\ge 2$,    Theorem \ref{foliatedtheorem}  implies that  $\Gamma$ can be conjugated into  a fiber similarity group.


{\bf Individual biLipschitz maps (Section \ref{compatible}).}   
 In Section \ref{compatible}  we study the structure of individual  fiber similarity  maps  of $N$.  
   In general  fiber similarity  maps are not similarities of $N$ since     the similarity induced on $N/W$ may not be compatible with the similarities on the cosets of $W$  and
there may be a  shear along the cosets of $W$.
When $N$ is abelian, one can show that any  fiber similarity   map is the composition of a similarity of $N$ and   
  a shear map $(w,h)  \mapsto (w+ s(h),h)$ along cosets of $W$ (these are called ``almost similarities" in \cite{D10}). The same is true if $N$ is a direct product of  $W$  and $H$: up to left translation, any such map has the form $(w,h) \mapsto (\phi (w)s(h),  B(h))$ where
  $\phi:W \to W$ and $B:H \to H$ are automorphisms, $(w,h) \mapsto (\phi(w), B(h))$ is an automorphism of $N$ (and a similarity with respect to some $D$-homogeneous distance) and $s:H \to W$ is the {shear} amount.

The general case is more involved. To understand the interaction of $W$ and $N/W$ we decompose the  Lie algebras $\mathfrak n$ of $N$ and $\mathfrak w$ of $W$ into eigenspaces
 $$\mathfrak n=V_{\lambda_1} \oplus \cdots \oplus V_{\lambda_n} \textrm{ and } \mathfrak w= W_1 \oplus \cdots \oplus W_m.$$ Here $V_{\lambda_j}$ is the eigenspace associated to the eigenvalue $\lambda_j$ of $D$ and $W_i$ is the eigenspace of $D\vert_{\mathfrak w}$ associated to eigenvalue $i$. Recall that without loss of generality we have assumed that $\lambda_1=1$ so that, since ${\mathfrak w}$ is Carnot with respect to $D\vert_{\mathfrak w}$, its eigenvalues are all positive integers.  The eigenvalue $\lambda_j$ will either be an integer or a multiple of $\alpha$ (or both) where $\alpha$ is the smallest eigenvalue of the induced derivation $\bar{D}$ on the Lie algebra of $N/W$.

 If $k$ is not an integral multiple of $\alpha$ then $V_k=W_k$ otherwise $k=j\alpha$ and we write $$ V_k=W_{j\alpha} \oplus H_j$$
for some complement $H_j \subset V_k$  of  $W_{j\alpha}$  (see Section \ref{prelim-sec} for details).
Then we have a bijection from $H=H_1 \oplus  \cdots \oplus H_s$ to $\mathfrak n/\mathfrak w$ so that $H$ is a transversal for $\mathfrak w$ in $\mathfrak n$. Note that
 a  fiber similarity  map does not necessarily preserve $H$.
  In Lemma \ref{sj} we show that any such map must have a \emph{compatible expression}.  For a rigorous definition of compatible expression see Definition \ref{defcompatible} but morally speaking it still involves  an automorphism $\phi:W\to W$ and a shear map on $N$ determined by a map
  $s:  N/W \to W$    but now $B:H \to \mathfrak n$ is  only a linear map from the transversal $H$ into $\mathfrak n$ that satisfies the following  compatibility condition with $\phi$: for any $w \in \mathfrak w$ and $h \in H$,  $[d\phi(w), Bh]=d\phi[w,h]$. In particular $w+h \mapsto d\phi(w)+ Bh$ does not necessarily define an automorphism of $\mathfrak n$. In addition both $B$ and the shear $s$ depend on the choice of transversal $H$.

{\bf Case when $\alpha$ is not an integer.} Note that when $\alpha$ is not an integer then $V_\alpha= H_1$  and  $\mathfrak n$ is a central product (see Section \ref{structure of n}). This simplifies many of our arguments.  
 In fact a substantial portion of our later analysis takes place on $V_\alpha=W_\alpha \oplus H_1$ when $W_\alpha \neq 0$.


{\bf Shear maps (Section \ref{shear-bilipschitz}).} In Section \ref{shear-bilipschitz} we characterize  maps $s:  N/W \to W$   for which  the corresponding shear maps $g \mapsto g\cdot s(gW)$  are  biLipschitz maps of $N$.     BiLipschitz  shear maps are used to conjugate a uniform quasisimilarity group to eliminate the  $W_j$ shear components  for $j<\alpha$  in the compatible expression of group elements.   As above we work with Lie algebra coordinates, we identify $W$ with $\mathfrak w$ and $N/W$ with $\mathfrak n/\mathfrak w$ via the exponential map and write $s$ as $s: \mathfrak n/\mathfrak w\ra \mathfrak w$.
First we show that all such maps  $s$ must have image in $Z(\mathfrak w)$, the center of $\mathfrak w$.
In the abelian case any $\frac{1}{\alpha}$-H\o lder map $\mathfrak n/\mathfrak w \to Z(\mathfrak w)$ induces a biLipschitz map of $N$. When $\alpha$ is not an integer
there is also a simple description of  such admissible  maps  $s$. In general, the description is more complex. To describe  $s:\mathfrak n/\mathfrak w \to Z(\mathfrak w)$ we first decompose $s=(s_i)$  where $s_i: H \to  Z_i:= W_i\cap Z(\mathfrak w)$.   Then we show that $s$ is admissible as long as each $s_i$ lies in a function space $\mathcal{H}_i$  that is recursively defined by the vanishing of certain integrals along horizontal closed curves.  (See Section \ref{shear-sec} and especially Propositions \ref{shear-bilip} and \ref{construction} for details).



{\bf Day's theorem (Section \ref{eliminate}).} Now we are ready to look at the entire uniform group $\G$, acting by fiber similarity  maps on $N$. The goal in this step is to show that  after conjugation
   the shear  components $s_j$ for $j < \alpha$
     of each group element must be zero. This is where Day's theorem comes into play. Recall that Day's theorem shows that any amenable group that has an affine action on a compact convex subset of a locally convex topological vector space has a fixed point.
In previous cases, such as those covered in \cite{DyX16},  Day's theorem applies directly to the action of $\G$ on the Banach space of $\frac{1}{\alpha}$-Holder functions.   Using the fixed point  obtained, one can use the corresponding biLipschitz shear map to conjugate the group action to an action where all shear  components are trivial. In this paper   we need to show that the fixed point lies  in $\mathcal{H}_j$, the subspace of H\o lder continuous maps defined above.   We do this in order to construct a conjugating biLipschitz shear map, see the preceding paragraph.


 {\bf{Conformal structure in the $V_\alpha$ direction (Section \ref{cstructure}).}}
   At this point,
  the shear  components  $s_j$ for $j<\alpha$ of each  group element  are  zero.
   In Section \ref{cstructure} we conjugate one more time so that $s_\alpha$ is a homomorphism. This  part requires substantially more work. It involves first  applying a modified version of a foliated Tukia-type argument (as in \cite{D10} and \cite{DFX})  that uses a measurable conformal structure defined solely in the $V_\alpha$  direction.
   Some care is needed to define this structure since our maps do not necessarily preserve $V_\alpha$. Once we know that $s_\alpha$ is a homomorphism it is not hard to show that our maps are affine maps  of $N$ and hence similarities with respect to some $D$-homogeneous   distance on $N$.
   This section is only needed when $\alpha$ is an integer.

{\bf Dimension one (Sections \ref{dim(w)=1}    and \ref{n/w}).}  Theorem \ref{foliatedtheorem}
 only applies when $\dim(N/W)$ and $\dim(W)$ are both at least two. In the case where $\dim(W)=1$ we are not able to define a foliated conformal structure along $W$ but instead we use the action of the group on the space of derivative maps in the $W$ direction. Once again we require Day's theorem which we apply to this action. We are able to construct a conjugating map that conjugates our group action into one by  fiber similarity maps. 
   If instead $\dim(N/W)=1$  then there are two cases. If $N$ is a
 direct product we can use Farb-Mosher \cite{FM99} to conjugate the action on $N/W$. Otherwise we use the algebraic structure to show that the group action was already by  fiber similarity maps. 

\section{Preliminary}\label{prelim-sec}

\subsection{Quasi-actions  and  uniform quasisimilarity actions}

Let $L\ge 1$, $A\ge 0$.
A (not necessarily continuous) map
$f:X\ra Y$ between two metric spaces is an $(L,A)$-\emph{quasi-isometry}
if:
\newline (1) $d(x_1,x_2)/L-A\le d(f(x_1), f(x_2))\le L\, d(x_1, x_2)+A$
for all $x_1,x_2\in X$;
\newline (2)  for any $y\in Y$, there is some $x\in X$ with
$d(f(x), y)\le A$.
\newline
  An $(L,0)$-quasi-isometry is also called a $L$-biLipschitz map.

   One of the main motivations for studying  quasi-isometries comes from geometric group theory.
A finitely generated group $\Gamma$ with a symmetric finite generating set $S$ has an associated left invariant \emph{word metric} given by $d_S(g,h)= \| g^{-1} h \|_S$ where $\| a \|_S$ denotes the minimal $k \in \Z_+$ such that $a=s_1 s_2 \cdots s_k$ with $s_i \in S$ for $i=1, \ldots , k$. It is a simple exercise to show that for any two finite generating sets $S, S'$ we have that $(\Gamma, d_S)$ and $(\Gamma, d_{S'})$ are quasi-isometric.

The definition of a quasi-action  can be found in the Introduction.




Let $\Lambda \ge 1$ and $C>0$. A bijection $F:X\ra Y$ between two
metric spaces is called a $(\Lambda,C)$-\emph{quasisimilarity}    if
\[
   \frac{C}{\Lambda}\, d(x,y)\le d(F(x), F(y))\le C\,\Lambda\, d(x,y)
\]
for all $x,y \in X$.
   When $\Lambda=1$, we say $F$ is a {similarity}.
It is clear that a map is a quasisimilarity if and only if it is a
biLipschitz map. The point of using the notion of quasisimilarity is
that sometimes there is control on $\Lambda$ but not on $C$.
 An action  $\Phi: \Gamma\ra \text{Homeo}(X)$ of a group $\Gamma$ on a metric space $X$ is  a \emph{uniform quasisimilarity action} if there is some $\Lambda\ge 1$  with the following property: for every $\gamma\in \Gamma$ there is a constant $C_\gamma>0$  such that $\Phi(\gamma)$ is a $(\Lambda, C_\gamma)$-quasisimilarity.


Let $X$ be a Gromov hyperbolic space, and $\Gamma$ a group quasi-acting on $X$.
  If there is a point $\infty\in \partial X$ such that for any $\gamma\in \Gamma$,
    the homeomorphism of $\partial X$  induced by $G_\gamma$ fixes $\infty$ and is biLipschitz with respect to a fixed visual metric on $\partial X$, then the quasi-action of $\Gamma$ on $X$ induces a uniform quasisimilarity action of $\Gamma$ on
     $(\partial X\backslash \{\infty\},  d)$ for some parabolic visual metric   $d$  based at $\infty$.
       Conversely,  if $X$ is a visual Gromov hyperbolic space and $\Gamma$
   admits a uniform quasisimilarity action on   $(\partial X\backslash \{\infty\},  d)$ for some parabolic visual metric  $d$  based at $\infty$, then this action extends to a quasi-action of $\Gamma$ on $X$.   More generally, there is a notion of uniform quasi-Mobius actions,
     and  quasi-actions on  visual Gromov hyperbolic spaces correspond to  uniform quasi-Mobius actions on the visual boundary.  Since we do not need  the notion of uniform quasi-Mobius actions,  we shall not recall the definition here.


\subsection{Nilpotent Lie algebras and nilpotent Lie groups}\label{nilpprelim}

Let $\mathfrak n$ be a Lie algebra.  The lower central series is defined recursively as follows:
 ${\mathfrak n}^{(1)}=\mathfrak n$, ${\mathfrak n}^{(k)}=[\mathfrak n, \mathfrak n^{(k-1)}]$.
 The Lie algebra $\mathfrak n$ is called nilpotent if  ${\mathfrak n}^{(t+1)}=0$  for some $t\ge 1$. The smallest such $t$ is called the nilpotency of $\mathfrak n$.
  A connected Lie group is nilpotent if and only if its Lie algebra is nilpotent.

Let  $N$ be a simply connected nilpotent Lie group with Lie algebra $\mathfrak n$.
  Then the  exponential map  $\text{exp}:  {\mathfrak n}\ra N$ is a diffeomorphism.
    Under this identification the Lebesgue   measure on
  $\mathfrak n$  is a  Haar measure on $N$ (\cite{CG90},  page 19).
   One can
     pull back the group operation from $N$ to get a group structure
 on $\mathfrak n$.     This group structure on $\mathfrak n$  can be described by the   Baker-Campbell-Hausdorff formula
 (BCH formula in short),  which expresses the product $X*Y$ ($X, Y\in {\mathfrak n}$)
    in terms of the iterated Lie brackets  of $X$ and $Y$.
     The group operation in $N$ will be denoted by $\cdot$.
   The pull-back group operation   $*$  on $\mathfrak n$ is defined as follows.
      For $X,   Y\in \mathfrak n$,   define
   $$X*Y=\text{exp}^{-1}(\text{exp} X\cdot \text{exp} Y).$$
  Then the BCH formula (\cite{CG90},    page 11)  says
\begin{align*}
 X*Y  & =X+Y+\frac{1}{2}[X,Y]+\frac{1}{12}[X,[X,Y]]
-\frac{1}{12}[Y, [X, Y]]+\cdots.
\end{align*}

We also recall the following formula (\cite{Hel78}, pages 127, 128):
\begin{equation}\label{conjugationformula}
Y*X*(-Y)=\sum_{k=0}^\infty\frac{1}{k!}({\rm ad} \,Y)^k (X)=X+[Y, X]+\cdots +\frac{1}{k!}({\rm ad} \,Y)^k (X)+\cdots.
\end{equation}

Recall, if $\phi: G_1\to G_2$ is a Lie group homomorphism and $d\phi: \mathfrak g_1\to \mathfrak g_2$ denotes the associated Lie algebra homomorphism then
$\phi\circ \text{exp}= \text{exp}  \circ d\phi$.

{\bf Standing Assumption.} {In this paper (particularly in Sections \ref{shear-bilipschitz}--\ref{cstructure}   and Appendix \ref{proof of compatible}) we shall often identify a simply connected  nilpotent Lie group  $N$ with its Lie algebra  $\mathfrak n$ via the exponential map.  With this identification, the inverse
   (with respect to $*$)   of an element $X\in\mathfrak n$  is $-X$:  $X^{-1}=-X$.
     For this reason we denote by $0$ the identity element of a Lie group.  }


The following statement is known (see for example \cite{P11}, p.980). We include it here for completeness.

\begin{Le}\label{irine}
Let  $\phi, \tilde \phi: N_1\ra N_2$ be Lie group   homomorphisms between two simply connected nilpotent Lie groups.
 Let $N_2$ be equipped with a left invariant metric $d$ such that  $d$  induces the manifold topology and  closed balls are compact. Suppose
  $C:=\sup_{x\in N_1}\{d(\phi(x), \tilde \phi(x))\}<\infty$. Then $\phi=\tilde \phi$.

\end{Le}


\begin{proof}
  It suffices to show    the   corresponding Lie algebra homomorphisms   agree.    Suppose the contrary that there is some $X\in \mathfrak n_1$ such that
  $d\phi(X)\not= d\tilde \phi(X)$.  Let $\{\mathfrak n_2^{(j)}\}_j$ be the lower central series of $\mathfrak n_2$  and $\pi_j: \mathfrak n_2\ra  \mathfrak n_2/{\mathfrak n_2^{(j)}}$ the quotient homomorphism.   Let $k\ge 2$ be the integer satisfying
      $d\phi(X)-d\tilde \phi(X)\in \mathfrak n_2^{(k-1)}\backslash \mathfrak n_2^{(k)}$.
 Set $Y=\pi_k(d\phi(X))$,    $\tilde Y=\pi_k(d\tilde \phi(X))$  and $\mathfrak g=  \mathfrak n_2/{\mathfrak n_2^{(k)}}$.
   Then   $Y=\tilde Y+Z$ for some $Z\not=0$ in the center of $\mathfrak g$. The assumption implies
$\bar d(tY, t\tilde Y)\le C$  for all $t\in \mathbb R$, where $\bar d$ is the  distance on $\mathfrak g$  induced by $d$:
 $\bar d (\bar a, \bar b)=\inf\{d(a, b)| a\in \pi_k^{-1}(\bar a),   b\in \pi_k^{-1}(\bar b) \}$.   Here we identified $N_2$ with $\mathfrak n_2$ and so $d$ is a  distance on $\mathfrak n_2$.
    On the other hand, the BCH formula gives $(-t\tilde Y)*(tY)=tZ$ and so
 $C\ge \bar d(tY, t\tilde Y)=\bar d(0, (-t\tilde Y)*(tY))=\bar d(0, tZ)$.  Pick $z\in\pi^{-1}_k(Z)$.  Then there is some
  $a_t\in {\mathfrak n_2^{(k)}}$ satisfying $d(a_t, tz)\le C$.   We can write $tz=a_t*b_t$ for some $b_t\in B:=\{x\in \mathfrak n_2| d(0, x)\le C\}$.
     Since ${\mathfrak n_2^{(k)}}$  is an ideal of $\mathfrak n_2$,  by the BCH formula we can write $a_t*b_t=c_t+b_t$ for some $c_t\in {\mathfrak n_2^{(k)}}$.
      We have $tz=c_t+b_t$  and so $z=\frac{c_t}{t}+\frac{b_t}{t}$.  Since the  ball $B$ is compact, we have $\frac{b_t}{t}\to 0$ as $t\to \infty$.
       This implies $\frac{c_t}{t}\to z$ as $t\to \infty$.   As $\frac{c_t}{t}\in {\mathfrak n_2^{(k)}}$  and ${\mathfrak n_2^{(k)}}$ is closed in $\mathfrak n_2$, we conclude
       that $z\in {\mathfrak n_2^{(k)}}$, contradicting the  fact that $Z\not=0$.

\end{proof}

\subsection{Carnot algebras   and  Carnot groups  }\label{basics}

  Carnot algebras  form a particular class of nilpotent Lie algebras.
A   {Carnot Lie algebra} is a finite dimensional Lie algebra
$\mathfrak n$   together with  a direct sum   decomposition
    $\mathfrak n=V_1\oplus V_2\oplus\cdots \oplus V_r$
  of   non-trivial   vector subspaces
 such that $[V_1,
V_i]=V_{i+1}$ for all $1\le i\le r$,
    where we set $V_{r+1}=\{0\}$.    For every Carnot algebra
 $\mathfrak n=V_1\oplus V_2\oplus\cdots \oplus V_r$,  the \emph{Carnot derivation}
     $D: \mathfrak n\ra \mathfrak n$ is given by
  $D(x)=ix$ for $x\in V_i$.   The automorphisms of $\mathfrak n$ generated by the Carnot derivation are called  \emph{Carnot dilations}
   and are given by
  $\delta_t: \mathfrak n \ra \mathfrak n$, $t\in (0, +\infty)$, where
 $\delta_t(x)=t^i x$ for  $x\in V_i$.
  Let   $\mathfrak n=V_1\oplus V_2\oplus\cdots \oplus V_r$
    and $\mathfrak n'=V'_1\oplus V'_2\oplus\cdots \oplus V'_s$  be two  Carnot
    algebras.
  A Lie algebra  homomorphism
  $\phi: \mathfrak n\ra \mathfrak n'$
     is graded if 
  $\phi(V_i)\subset V'_i$ for all $1\le i\le
  r$.


 A  simply connected nilpotent Lie group is a  {Carnot group}
 if its Lie algebra is a Carnot algebra.
    Let $N$ be a Carnot group with Lie algebra
      $\mathfrak n=V_1\oplus \cdots \oplus V_r$.  The subspace $V_1$ defines
      a left invariant distribution  $H N\subset TN$ on $N$.   
           An
      absolutely continuous curve $\gamma$ in $N$  whose velocity vector
       $\gamma'(t)$  is contained in  $H_{\gamma(t)} N$ for a.e. $t$
        is called  a horizontal curve.
          By Chow's theorem ([BR, Theorem 2.4]),   any two points
  of $N$ can be  connected by horizontal curves.
    Fix a left invariant inner product on
          $HN$.
  Let $p,q\in N$, the
  \emph{Carnot-Caratheodory   metric} $d_{CC}(p,q)$   between them is defined as
  the infimum of length of horizontal curves that join $p$ and $q$.
      Since the inner product on   $HN$ is left invariant, the Carnot
  metric on $N$ is also left invariant.  Different choices of inner
  product on $HN$ result in Carnot metrics that are biLipschitz
  equivalent.
    The Hausdorff dimension of $N$ with respect to  a  Carnot metric
    is  given by $\sum_{i=1}^r i\cdot \dim(V_i)$.

       \begin{definition}\label{pansu-d}
 Let $G$ and $G'$
  be two Carnot groups endowed with Carnot-Caratheodory metrics,  and $U\subset G$, $U'\subset G'$ open subsets.
   A map $F: U\ra U'$ is {Pansu  differentiable}
    at $x\in U$  if there exists a   graded   homomorphism
     $L: G\ra G'$ such that
     $$\lim_{y\ra x}\frac{d(F(x)^{-1}*F(y),\, L(x^{-1}*y))}{d(x,y)}=0.$$
          In this case, the  graded  homomorphism
     $L: G\ra G'$  is called the {Pansu  differential} of $F$ at $x$, and
     is denoted by   $DF(x)$.
\end{definition}

Pansu  \cite{P89}  showed that  a Lipschitz map between Carnot groups is Pansu differentiable  a.e. and if the map is biLipschitz then   the Pansu differential is a graded isomorphism.


\subsection{Derivations, semi-direct products, Heintze groups, and SOL-like groups}\label{deri}

Let $\mathfrak n$ be a Lie algebra.
A linear map $D: \mathfrak n\ra \mathfrak n$  is called a derivation if $D[X,Y]=[DX, Y]+[X, DY]$ for all $X, Y\in \mathfrak n$.
    Let $\text{der}(\mathfrak n)$ be the set of derivations of $\mathfrak n$. It is clearly a vector space.
     It becomes a Lie algebra with the usual bracket for endomorphisms: for $D_1, D_2\in \text{der}(\mathfrak n)$,
      define $[D_1, D_2]=D_1\circ D_2-D_2\circ D_1$.

        Let $\mathfrak w$ and $\mathfrak h$ be two Lie algebras, and $\phi: \mathfrak h\ra  \text{der}(\mathfrak w)$
         a Lie algebra homomorphism.  The semi-direct product $\mathfrak w\rtimes_\phi \mathfrak h$ is the
         vector space $\mathfrak w\oplus \mathfrak h$ with the Lie bracket given by:
         $$[(W_1, H_1), (W_2, H_2)]=([W_1, W_2]+\phi(H_1)(W_2)-\phi(H_2)(W_1),   [H_1, H_2]).$$

         Let $\mathfrak n$ be a Lie algebra and $D\in  \text{der}(\mathfrak n)$.  Then   we can define  a homomorphism $\phi: \mathbb R\ra  \text{der}(\mathfrak n)$ by $\phi(t)=tD$. We denote
         $\mathfrak n\rtimes_D\mathbb R:=\mathfrak n\rtimes_\phi\mathbb R$.
            When $\mathfrak n$ is a nilpotent Lie algebra and the eigenvalues of $D\in \text{der}(\mathfrak n)$
             have positive real parts, we call   $\mathfrak n\rtimes_D\mathbb R$ a Heintze algebra.   In this case, the simply connected
              solvable Lie group  with Lie algebra  $\mathfrak n\rtimes_D\mathbb R$  is called  a Heintze group,  and   the pair $(\mathfrak n, D)$  (and also $(N,D)$)
                  is called a Heintze pair.
              A Heintze algebra (or Heintze group, or the pair $(\mathfrak n, D)$)  is called purely real if all the eigenvalues of $D$ are real numbers.   By \cite{Ale75},  every Heintze group is biLipschitz to a purely real Heintze group.
               Since we are only  interested in quai-isometries of Heintze groups in this paper, we    only need to consider purely real Heintze groups.


  A  purely real Heintze pair
$(\mathfrak n, D)$   is  called  diagonal if $D$ is diagonalizable.

\begin{Le}\label{derivation}\label{derivation lemma}
    Suppose  $(\mathfrak n, D)$ is a  diagonal  Heintze pair  and
    $0<\lambda_1<\cdots <\lambda_r$ are the  eigenvalues  of $D$.    Denote by $V_{\lambda_i}$ the eigenspace   associated to $\lambda_i$. Then\newline
  (1) $\mathfrak n=\oplus_{i=1}^r V_{\lambda_i}$  and $[V_{\lambda_i}, V_{\lambda_j}]\subset  V_{\lambda_i+\lambda_j} $;\newline
   (2)  Let $\mathfrak w$ be the Lie sub-algebra of $\mathfrak n$ generated by $V_{\lambda_1}$.   Then
     $\mathfrak w$ is a Carnot algebra and  $D(\mathfrak w)=\mathfrak w$.  If $\mathfrak w$ is an ideal of $\mathfrak n$, then $D$  projects to a derivation $\bar D$ of $\bar{\mathfrak n}:=\mathfrak n/\mathfrak w$,  which is
     diagonalizable  with positive eigenvalues.   Furthermore, the smallest eigenvalue  of $\bar D$ is  strictly larger than $\lambda_1$.

\end{Le}

\begin{proof}
(1)  $\mathfrak n=\oplus_{i=1}^r V_{\lambda_i}$    follows from linear algebra and
$[V_{\lambda_i}, V_{\lambda_j}]\subset  V_{\lambda_i+\lambda_j} $  follows from the definition of derivation.

  (2)   Set $W_1=V_{\lambda_1}$  and $W_j=[W_1, W_{j-1}]$ for $j\ge 2$.  By (1) we have
  $W_j\subset  V_{j\lambda_1}$. It follows that $\mathfrak w=\oplus_j W_j$ is a Carnot grading  and $D(\mathfrak w)=\mathfrak w$.
    Now assume $\mathfrak w$ is an ideal.   
     As $D(\mathfrak w)=\mathfrak w$, $D$ induces a   derivation $\bar D: \bar{\mathfrak n}\ra \bar{\mathfrak n}$.

 Since $\mathfrak w$ is graded,  that is,  $\mathfrak w=\oplus_i(\mathfrak w\cap V_{\lambda_i})$,
we have $\bar{\mathfrak n}=\oplus_i (V_{\lambda_i}/(\mathfrak w\cap V_{\lambda_i})$.  As $D|_{V_{\lambda_i}}$ is multiplication by $\lambda_i$,  $\bar D$ is also multiplication by $\lambda_i$ when restricted   to
  $V_{\lambda_i}/(\mathfrak w\cap V_{\lambda_i})$. It follows that $\bar D$ is also diagonalizable and  its set of eigenvalues is a
  subset of the set of eigenvalues of $D$.   Since $W_1=V_{\lambda_1}$ we have $V_{\lambda_1}/(\mathfrak w\cap V_{\lambda_1})=0$  and so $\lambda_1$ is   not  an eigenvalue of $\bar D$.  It follows that the smallest eigenvalue of $\bar D$ is $>\lambda_1$.

\end{proof}




 A Heintze pair $(\mathfrak n, D)$ (Heintze algebra, Heintze group) is  of Carnot type if it is purely real  with $D$ diagonal  such that $\mathfrak w=\mathfrak n$, where $\mathfrak w$ is as in Lemma \ref{derivation}.  A  Heintze pair   is of non-Carnot type otherwise.

   Let $(\mathfrak n_i, D_i)$ ($i=1, 2$)  be a Heintze pair.
               Let  $D$ be the derivation of $\mathfrak n_1\times \mathfrak n_2$ given by
              $D=(D_1, -D_2)$.   We call $(\mathfrak n_1\times \mathfrak n_2)\rtimes_D \mathbb R$ a SOL-like algebra  and
                the simply connected solvable Lie group  with Lie algebra $(\mathfrak n_1\times \mathfrak n_2)\rtimes_D \mathbb R$   a   SOL-like group.

 For any Lie sub-algebra $\mathfrak h$ of a Lie algebra $\mathfrak n$,
  the normalizer of $\mathfrak h$ in $\mathfrak n$ is defined by:
   $$N(\mathfrak h)=\{x\in \mathfrak n:  [x, \mathfrak h]\subset \mathfrak h\}.$$


   \begin{lemma} \label{normalizer}
    If a derivation $D$ of a Lie algebra $\mathfrak n$ satisfies  $D(\mathfrak h)\subset  \mathfrak h$, then
   $D(N(\mathfrak h))\subset N(\mathfrak h)$.
   \end{lemma}

   \begin{proof}
       Let $X\in N(\mathfrak h)$  and $Y\in \mathfrak h$   be arbitrary.  Then
         $$[D(X), Y]=D[X, Y]-[X, D(Y)]\in \mathfrak h$$
          implying $D(X)\in N(\mathfrak h)$.

   \end{proof}

\subsection{Homogeneous distances on nilpotent Lie groups}\label{homo distance}

Let $(N,D)$ be a   diagonal  Heintze pair.
  A distance   $d$  on $N$ is called $D$-homogeneous    if it is left invariant,   induces the manifold topology on $N$ and such that
 $d(e^{tD}x, e^{tD}y)=e^t d(x, y)$ for all $x, y\in N$ and $t\in \mathbb R$, where $\{e^{tD}|t\in \mathbb R\}$  denotes the automorphisms  of $N$ generated by the derivation $D$.    Let $\mathfrak n=\oplus_j V_{\lambda_j}$ be the decomposition of $\mathfrak n$ into the direct sum of eigenspaces of $D$.
  An inner product $\left<\cdot , \cdot \right>$ on $\mathfrak n$ is called a $D$-inner product if
  the eigenspaces corresponding to distinct eigenvalues are perpendicular with respect to
  $\left<\cdot , \cdot \right>$.  By the construction in Theorem 2 of \cite{HSi90}, given any $D$-inner product $\left<\cdot , \cdot \right>$ on $\mathfrak n$, there is a $D$-homogeneous distance $d$ on $N$ such that
  $d(0,x)=\langle x,x\rangle^{\frac{1}{2\lambda_j}}$ for $x\in V_{\lambda_j}$.     
        During the course of proof of  Theorem  \ref{main-uniform}, we will modify the $D$-inner products   several times  and so also  the corresponding $D$-homogeneous distances.

  It is easy to see that any two $D$-homogeneous distances on $N$ are biLipschitz equivalent.   We will always equip $N$ with a $D$-homogeneous distance. Hence it makes sense to
   speak of  a biLipschitz map of $N$ without specifying the  $D$-homogeneous distance.

        For computational purposes, we also define a function $\rho$ that is biLipschitz equivalent to a $D$-homogeneous
        distance
         $d$.  For any  $D$-inner product $\left<\cdot , \cdot \right>$ on  $\mathfrak n$  define a ``norm'' on $\mathfrak n$ by
                 $$||v||=\sum_i |v_i|^{\frac{1}{\lambda_i}} ,$$
                  where $v=\sum_i v_i$ with $v_i\in V_{\lambda_i}$.
                  Then   define $\rho$ by
                  $\rho(x,y)=||x^{-1}*y||$.  We identify $\mathfrak n$ and $N$.
                   Clearly $\rho$ is left invariant, induces the manifold topology and satisfies
                   $\rho(e^{tD}x, e^{tD}y)=e^t \rho(x, y)$ for all $x, y\in \mathfrak n$ and $t\in \mathbb R$.   It follows that for any $D$-homogeneous  distance $d$ on $\mathfrak n$,  there is a constant $L\ge 1$ such that   $d(x,y)/L\le \rho(x,y)\le L\cdot d(x,y)$ for all $x, y\in \mathfrak n$.
                       The explicit formula for $\rho$ will make the calculations much easier.    We will frequently use
                        $\rho$ rather than $d$ for  estimates.


 \begin{lemma}\label{bilip auto}
 Let $\phi$ be an  automorphism of $N$.
 Then $\phi$ is  biLipschitz  if and only if  $d\phi$ is
  ``layer preserving''; that is, $d\phi(V_{\lambda_j})=V_{\lambda_j}$ for each $j$.
 \end{lemma}

        \begin{proof}
        First suppose $\phi$ is biLipschitz. Let $0\not=v\in V_{\lambda_j}$ and write
        $d\phi(v)=\sum_i x_i$ with $x_i\in V_{\lambda_i}$.
         Then $d\phi(tv)=\sum_i{tx_i}$.
           We have $$\rho(0, tv)= |v|^{\frac{1}{\lambda_j}}|t|^{\frac{1}{\lambda_j}}$$
             and
             $$\rho(0, d\phi(tv))=\sum_i |x_i|^{\frac{1}{\lambda_i}}|t|^{\frac{1}{\lambda_i}}.$$
             The biLipschitz condition implies $x_i=0$ when $i\not=j$ by letting $t\ra \infty$ or $t\ra 0$.

        Conversely assume $d\phi$ is layer preserving.
          Then there is some constant $C\ge 1$ such that
          \begin{equation}           \label{phi(v)}
          |v|/C\le |d\phi(v)|\le C |v|
          \end{equation}
             for all $v\in V_{\lambda_j}$, $\forall j$.  Now let $v\in \mathfrak n$.  Write
          $v=\sum_j v_j$ with $v_j\in V_{\lambda_j}$.  Then
          $d\phi(v)=\sum_j d\phi(v_j)$.
           We have $\rho(0, v)=\sum_j|v_j|^{\frac{1}{\lambda_j}}$ and
            $\rho(0,d \phi(v))=\sum_j|d\phi(v_j)|^{\frac{1}{\lambda_j}}$.
        Now the claim follows from (\ref{phi(v)}).

                 \end{proof}

An automorphism $\phi$  of $N$ is called graded if it satisfies the condition in Lemma
\ref{bilip auto}.

Let $G$ be a connected Lie group with a left invariant distance $d$ that induces the manifold topology, and $H$ a  closed normal  subgroup of $G$.
 We define a distance on $G/H$ by $\bar d(xH, yH)=\inf\{d(xh_1, yh_2)| h_1, h_2\in H\}$.     Then $\bar d$ is a left invariant distance on $G/H$ that induces the manifold topology and the quotient map $(G, d) \to (G/H, \bar d)$ is $1$-Lipschitz.
Since $H$ is normal,   we have $\bar d(xH, yH)=d(xh_1, yH)=d(yh_2, xH)=d_H(xH, yH)$ for any $h_1, h_2\in H$, where
 $d_H$ denotes the Hausdorff distance.
If $F$ is a biLipschitz map of $(G, d)$ that permutes the cosets of $H$, then $F$ induces a   biLipschitz map $\bar F: (G/H, \bar d) \to (G/H, \bar d)$   with the same biLipschitz constant as $F$.

Let $(\mathfrak n, D)$ be a diagonal
   Heintze pair and $d$ a $D$-homogeneous distance on $N$. Assume $\mathfrak w$ is an ideal of $\mathfrak n$ such that
 $D(\mathfrak w)\subset
\mathfrak w$.   Then $D$ induces a derivation $\bar D$ of $\mathfrak n/\mathfrak w$   and  $({\mathfrak n/\mathfrak w}, \bar D)$   is a
 diagonal Heintze pair.
     In this case, the distance $\bar d$ on $N/W$ induced by $d$ is a $\bar D$-homogeneous distance, where $W$
       is the Lie subgroup of $N$ with Lie algebra $\mathfrak w$.

     \subsection{A  fiber Tukia theorem for  diagonal Heintze pairs}\label{fibertukia}

     Here we recall the Tukia-type theorem for Carnot groups and a fiber version of Tukia theorem for  diagonal Heintze pairs, see \cite{DFX} for more details.

      \begin{theorem}\label{tukia} (Theorem 1.1, \cite{DFX})
    Let $N$ be  a Carnot group  and $\hat N=N\cup \{\infty\}$ the one-point compactification of $N$.
      There is    a left invariant Carnot-Caratheodory metric $d_0$ on $N$ with the following property.
    Let $G$ be a uniform  quasiconformal group of   $\hat N$.
     If the action of $G$ on the space of distinct triples of $\hat N$ is co-compact, then
       there is some quasiconformal map $f: \hat N \ra \hat N$
         such that $fGf^{-1}$    consists of conformal  maps  with respect to $d_0$.
   \end{theorem}

   The metric $d_0$ has the largest conformal group  in the sense that  the conformal group of any  left invariant Carnot-Caratheodory metric is conjugated into the conformal group of $d_0$.  In general it is not possible to conjugate a uniform quasiconformal group into the conformal group of an arbitrary  left invariant Carnot-Caratheodory metric, see   Section 6, \cite{DFX}  for an example.

           Next let $(\mathfrak n, D)$ be  a diagonal Heintze pair.
     Then there is a sequence of $D$-invariant Lie sub-algebras
     $\{0\}=\mathfrak n_0\subset \mathfrak n_1\subset \cdots\subset \mathfrak n_s= \mathfrak n$  with the following properties:   each $\mathfrak n_{i-1}$ is an ideal of $\mathfrak n_{i}$  with the quotient
     $\mathfrak n_{i}/{\mathfrak n_{i-1}}$ a Carnot Lie algebra;  $D$ induces a derivation $\bar D:  \mathfrak n_{i}/{\mathfrak n_{i-1}}  \ra
      \mathfrak n_{i}/{\mathfrak n_{i-1}} $  which is a multiple of the Carnot derivation of
       $\mathfrak n_{i}/{\mathfrak n_{i-1}}$.
      Let $N_i$ be the connected Lie subgroup of $N$ with Lie algebra $\mathfrak n_i$.  Then $N/{N_i}$  is a homogeneous manifold and the natural map
       $\pi_i:  N/{N_{i-1}}   \ra N/{N_i}$  is a fiber bundle with fiber the Carnot group $N_i/{N_{i-1}}$.
        We call the sequence of subgroups $0=N_0<N_1<\cdots<N_s=N$  the preserved subgroup sequence.

        Let $d$ be  a $D$-homogeneous distance  on $N$.
       In general  $d$  does not induce any metric on the homogeneous   space $N/{N_i}$
         when $N_i$ is not normal in $N$.  Nonetheless,     it induces a metric on the fibers
        $ N_i/{N_{i-1}}$  of  $\pi_i:  N/{N_{i-1}}   \ra N/{N_i}$.
    Furthermore, every biLipschitz map $F$ of $N$ permutes the cosets of $N_i$ for each $i$.   Hence $F$ induces a   map $F_i: N/{N_i}  \ra N/{N_i}$  and a bundle map of  $\pi_i:  N/{N_{i-1}}   \ra N/{N_i}$.  The restriction of $F_i$ to  the  fibers of $\pi_i$ are biLipschitz maps of the Carnot group   $ N_i/{N_{i-1}}$ in the following sense.      For each $p\in N$, let $F_p=L_{F(p)^{-1}}\circ F\circ L_p$, where  $L_x$ denotes the left translation of $N$ by $x$.   Notice that the map
$(F_p)_{i-1}: N/{N_{i-1}}\ra N/{N_{i-1}}$ satisfies
$(F_p)_{i-1}(N_i/{N_{i-1}})=N_i/{N_{i-1}}$.  The statement above  simply means
  $(F_p)_{i-1}|_{N_i/{N_{i-1}}}:  N_i/{N_{i-1}}\ra  N_i/{N_{i-1}}$  is biLipschitz with respect to any  left invariant Carnot-Caratheodory metric on $N_i/{N_{i-1}}$.

\begin{theorem}\label{foliatedtheorem}  (Theorem 1.3, \cite{DFX})
Let  $(N,D)$ be  a diagonal Heintze pair  and
 $\G$ be a  uniform quasisimilarity group of $N$ that acts cocompactly on the space of distinct pairs of $N$ (or equivalently $\G$ a group that quasi-acts coboundedly on $S=N \rtimes_D \R$).
      Let $I=\{i| 1\le i\le s,  \dim(N_i/{N_{i-1}})\ge 2\}$.
    Then there exists a biLipschitz map $F_0: N \to N$    and   a left invariant
     Carnot-Caratheodory metric  $d_i$ on  $N_i/{N_{i-1}}$ for each $i\in I$
    such  that     for each $p\in N$ and   each  $g\in F_0\G F^{-1}_0$, the map
      $(g_p)_{i-1}|_{N_i/{N_{i-1}}}:  (N_i/{N_{i-1}}, d_i)\ra  (N_i/{N_{i-1}}, d_i) $
          is a similarity.

\end{theorem}

       \section{Quasi-isometric rigidity of  solvable groups}\label{cdiff-sec}

       In this section we combine Tukia-type  theorems with the coarse differentiation method of Eskin-Fisher-Whyte \cite{EFW12}, \cite{EFW13} to establish quasi-isometric rigidity results for solvable groups.

       \subsection{Quasi-isometric rigidity of  lattices in  the isometry group of SOL-like  groups}

One of the main applications of Tukia-type  theorems for nilpotent groups   
  is quasi-isometric rigidity of lattices in the isometry group of SOL-like groups.
  Recall a SOL-like group has the form  $S=(N_1\times N_2)\rtimes \mathbb R$  and admits two foliations by Heintze groups
    $N_1 \rtimes_{D_1} \R$ and $N_2 \rtimes_{D_2} \R$.

    There is a two step outline for proving the quasi-isometric rigidity for  SOL-like groups:
 \begin{enumerate}
 \item Show that up to composition with an isometry any self quasi-isometry of a SOL-like group is bounded distance from a product map $$(x,y,t) \to (f(x),f(y),t)\textrm{ where }x\in N_1,\ y \in N_2,\ t \in \R.$$
 \item Prove a Tukia-type  theorem for groups acting by quasi-similarities on $N_i, \ i=1,2$.\\
 \end{enumerate}
This strategy comes from Eskin-Fisher-Whyte's seminal work on the quasi-isometric rigidity of SOL itself. In their work  \cite{EFW07, EFW12, EFW13} they focus primarily on Step  (1) and develop a new ``coarse differentiation" technique for understanding quasi-isometries of SOL-like groups. For clarity
their papers focus primarily on the case of the three dimensional SOL and its discrete cousins the Diestel-Leader graphs.
 Despite this, their results follow similarly for all SOL-like groups as already pointed out in [EF10, Section 4.4].  Indeed, in her work \cite{P11a,P11b} on abelian-by-abelian higher rank SOL-like groups (i.e groups of the form $\R^n \rtimes \R^m$ with appropriate conditions on the action of $\R^m$)  Peng already points out that Eskin-Fisher-Whyte's arguments work more generally in the case of SOL-like groups where $N_i$ is abelian.  Recently Ferragut in his thesis \cite{Fe22} has endeavored to find the most general framework for which Eskin-Fisher-Whyte's work applies. In doing so he has
 extended the work of \cite{EFW12}  to the class of \emph{horo-pointed metric measure spaces} and is looking to do the same for \cite{EFW13}. 



We will briefly review in the SOL case how combining steps  (1) and  (2) gives quasi-isometric rigidity for lattices in SOL.

{\bf Quasi-isometric rigidity outline for SOL.} Let $\Gamma$ be a finitely generated group quasi-isometric to SOL (hence to any lattice in SOL). Then there is an induced quasi-action of $\Gamma$ on SOL by $L\geq 1,\ C\geq 0$ quasi-isometries where $L,C$ are uniform over all $\gamma \in \Gamma$. Up to replacing $\Gamma$ with a subgroup of index two we can apply Eskin-Fisher-Whyte (i.e. Step   (1) ) to get that all $\gamma \in \Gamma$ act by maps of the form $\gamma(x, y, t) = (f_\gamma(x), g_\gamma(y), t + c_\gamma)$. This induces two uniform quasisimilarity actions on $\R$ (via $f_\gamma$ and $g_\gamma$ respectively). By Step  (2) (in our case by \cite{FM99}) both of these actions can be conjugated to actions by similarities on $\R$. That is,  after conjugation $f_\gamma$ scales distances on $\R$ by some constant $n_\gamma$ and $g_\gamma$  by some constant $m_\gamma$.
Note that these two conjugations together define a quasi-conjugation of the original quasi-action on SOL.  If  $n_\gamma= m_\gamma$ for all $\gamma \in \Gamma$ then the quasi-action is bounded distance from an action by isometries. The other case cannot occur because if $n_{\gamma_0} \neq m_{\gamma_0}$ for even a single $\gamma_0$ then the quasi-isometry constants of the map induced by $\gamma_0^k$ go to infinity as $k \to \infty$ thus violating uniformity of the quasi-action.
Now we have that $\Gamma$ up to finite index and finite kernel acts by isometries on SOL. But SOL  is an index $8$ subgroup of
   its isometry group so  $\Gamma$ must be virtually a lattice in SOL.


For the proof of Theorem \ref{theorem:QIrigid} we also follow this outline however there are several extra considerations that apply in our more general setting. First, our conjugation may only give us an isometric action with respect to a different metric than the one we started with. Second, the isometry group of a more general SOL-like group $S$ may be much larger than $S$ (they do not have the same dimension in general). 

 {\bf Proof of Theorem   \ref{theorem:QIrigid}.}    We now prove Theorem \ref{theorem:QIrigid} in detail.

       Let $S=(N_1\times N_2)\rtimes \mathbb R$ be a SOL-like group  equipped with a left invariant Riemannian metric $g$, where  $(N_i, D_i)$   is Carnot  or Carnot-by-Carnot.   
          Let $\Gamma$ be a finitely generated group quasi-isometric to
            $S$. 
            The quasi-isometry between $\Gamma$ and $S$  induces a quasi-action of $\Gamma$ on $S$. In particular,
         each $\gamma\in \Gamma$ gives rise to a   $(L,C)$-quasi-isometry  $\phi(\gamma): S\rightarrow S$,         where  $L\ge 1$, $C\ge 0$   are constants independent of $\gamma$.
          As explained above,  the arguments of Eskin-Fisher-Whyte  imply  that  $\phi(\gamma)$ is at a bounded distance from a product map.  After  replacing $C$  by a larger constant   and replacing $\phi(\gamma) $ by a product map  we may assume that  $\phi(\gamma)$ is a product map.  After taking an index two subgroup  if necessary we may further assume that each $\phi(\gamma)$ preserves the foliation of $S$ by  cosets of  $S_i=N_i\rtimes_{D_i} \mathbb R$.  Thus for each $\gamma\in \Gamma$, there are maps $\gamma_1: N_1\rightarrow N_1$, $\gamma_2: N_2\rightarrow N_2$, $h_\gamma:
          \mathbb R  \rightarrow \mathbb R$, such that $\phi(\gamma)$ is given by:
      $$\phi(\gamma)(n_1, n_2, t)=(\gamma_1(n_1), \gamma_2(n_2), h_\gamma(t)).$$

             Let
          $\tilde{\gamma}_i:  S_i\rightarrow S_i$ be  given by $\tilde\gamma_i(n_i,t)=
          (\gamma_i(n_i), h_\gamma(t))$. Then $\gamma\mapsto \tilde \gamma_i$ defines a quasi-action of $\Gamma$ on $S_i$.   Since this quasi-action is by height-respecting quasi-isometries,   $\Gamma$ induces a uniform quasisimilarity action on
           $(N_i, d_i)$,
         where $d_i$ is a $D_i$-homogeneous distance on $N_i$.   Due to  the particular form of  $\tilde \gamma_i$  this  induced action of $\Gamma$ on
         $N_i $  
          is given by $\gamma\mapsto \gamma_i$.

          Now we assume  each $(N_i, D_i)$  $(i=1, 2$) is either Carnot or  Carnot-by-Carnot with $\dim(\mathfrak w_i)\ge 2$, where
           $\mathfrak w_i$ is the Lie sub-algebra of  $\mathfrak n_i$ generated by the  eigenspace of the smallest eigenvalue of $D_i$.
          Then by  Theorem \ref{main-uniform} (Carnot-by-Carnot case) or  Theorem \ref{tukia} (Carnot case), there is a  biLipschitz map $f_i$  of $N_i$
              and   a maximally symmetric
        $D_i$-homogeneous distance  $d_i$   on $N_i$   such that $f_i\circ \gamma_i\circ f_i^{-1}$    is a    similarity of  $(N_i, d_i)$   for each $\gamma\in \Gamma$.
        \cite{KLD17} Theorem 1.2  implies that  $f_i\circ \gamma_i\circ f_i^{-1}$
        has the form $f_i\circ \gamma_i\circ f_i^{-1}=L_{a_i}\circ e^{v_{i, \gamma}D_i}\circ \phi_i$  for some $a_i\in N_i$, where $\phi_i$ is an  automorphism of $N_i$
          and is also an isometry of $(N_i, d_i)$.   .





       \begin{lemma} \label{sol-compatible}
       The equality $v_{1, \gamma}+v_{2,\gamma}=0$ holds for all $\gamma\in \Gamma$.
       \end{lemma}

       \begin{proof}
       Note that the left translation $L_{(0, -v_{1, \gamma})}: S_1\rightarrow S_1$  (which is an isometry of $S_1$)
          is given by
       $L_{(0, -v_{1, \gamma})}(x,t)=(e^{-v_{1, \gamma}D_1}x, t-v_{1, \gamma})$.
       It follows that $L_{(0, -v_{1, \gamma})}\circ  \tilde \gamma_1$  is a $(L, C)$-quasi-isometry and  is given by
          $L_{(0, -v_{1, \gamma})}  \circ \tilde \gamma_1 (x, t)=(e^{-v_{1, \gamma}D_1}\gamma_1(x),
           h_\gamma(t)-v_{1, \gamma})$.
       Since the boundary map $e^{-v_{1, \gamma}D_1}\circ \gamma_1$  of
        $L_{(0, -v_{1, \gamma})}\circ  \tilde \gamma_1$ is
              an isometry, Lemma 5.1  in \cite{SX12}
               implies $| h_\gamma(t)-v_{1, \gamma}-t|\le C_1$ for a constant $C_1$ that depends only on
           $L,C$ and $N_1$.   Similarly by considering $L_{(0, v_{2, \gamma})}\circ  \tilde \gamma_2$ we get
            $| h_\gamma(t)+v_{2, \gamma}-t|\le C_2$ for a constant $C_2$ that depends only on
           $L,C$ and $N_2$.  Combining these two inequalities  we get
            $|v_{1, \gamma}+v_{2, \gamma}|\le C_1+C_2$ for all $\gamma\in \Gamma$.
             Since $v_{i, \gamma^n}=n v_{i, \gamma}$ for any $n\ge 1 $,  the above inequality applied to $\gamma^n$ implies
             $|v_{1, \gamma}+v_{2, \gamma}|\le (C_1+C_2)/n$.
              Since this is true for all $n\ge 1$, the lemma follows.

       \end{proof}

       \begin{lemma}\label{distance-sol}
       Let $f_i: (N_i, d_i)\rightarrow (N_i, d_i)$ ($i=1,2$) be  a biLipschitz map.  Let $d$ be a  left invariant Riemannian metric on $S$ such that $N_1$, $N_2$ and $\mathbb R$ are perpendicular to each other.
        Define $F:(S, d)\rightarrow (S, d)$  by $F(n_1, n_2, t)=(f_1(n_1), f_2(n_2), t)$. Then
         $F$ is a $(1,\tilde C)$-quasi-isometry for some constant $\tilde C\ge 0$.

       \end{lemma}

       \begin{proof}
       After rescaling the metric we may assume  that vertical lines $c_{(n_1, n_2)}:
       \mathbb R\rightarrow S$, $c_{(n_1, n_2)}(t)=(n_1, n_2, t)$,
         are unit speed geodesics.   Let $\pi_1: S\rightarrow S_1$ be given by $\pi_1(n_1, n_2, t)=(n_1, t)$ and  $\pi_2: S\rightarrow S_2$ be  given by  $\pi_2(n_1, n_2, t)=(n_2, t)$.  Also let $h: S\rightarrow \mathbb R$ be the height function
          $h(n_1, n_2, t)=t$.
           By     { Corollary 4.13 of  \cite{Fe20}   or  }  Theorem 4.1  in \cite{LDPX22}
           there is a constant $C_1\ge 0$ such that for any $p,q\in S$ we have
          $$|d(p,q)-d^{(1)}(\pi_1(p), \pi_1(q))-d^{(2)}(\pi_2(p), \pi_2(q))+|h(p)-h(q)||\le C_1,$$
            where $d^{(1)}$ is the metric on $S_1\sim (N_1\times\{0\})\rtimes \mathbb R  \subset S$ induced by $d$ and similarly for $d^{(2)}$.  Replacing $p,q$ with $F(p)$ and $F(q)$ respectively we  get
         $$|d(F(p), F(q))-d^{(1)}(\pi_1(F(p)), \pi_1(F(q)))-d^{(2)}(\pi_2(F(p)), \pi_2(F(q)))+|h(F(p))-h(F(q))||\le C_1.$$   The lemma follows from the following claim since by the definition of $F$ we have $h(F(x))=h(x)$ for any $x\in S$.

          Claim: There is a constant $\tilde C_i$ depending only on  the Gromov hyperbolicity constant of $S_i$ and the biLipschitz constant of $f_i$    such that  for all $p_1,  p_2\in S$:
           $|d^{(i)}(\pi_i(F(p_1)), \pi_i(F(p_2)))-d^{(i)}(\pi_i(p_1), \pi_i(p_2))|\le \tilde C_i$.

      Proof of the claim:  we will only consider the case $i=1$ as the case $i=2$ is similar.
       Let $p_1=(x_1, y_1, t_1)$, $p_2=(x_2, y_2, t_2)$. Then  the claim takes the form
        $$|d^{(1)}(f_1(x_1), t_1), (f_1(x_2), t_2))-d^{(1)}((x_1, t_1), (x_2, t_2))|\le \tilde C_1.$$
         Let $t_{x_1, x_2}$ be the height at which the two vertical geodesics $c_{x_1}$ and $c_{x_2}$ in $S_1$ diverge from each other (that is, the distance between $c_{x_1}(t_{x_1, x_2})$ and $c_{x_2}(t_{x_1, x_2})$ is $1$),    where for $x\in N_1$, $c_x: \mathbb R\rightarrow S_1$ is given by $c_x(t)=(x,t)$.
           Then we have
           $$|d^{(1)}((x_1, t_1), (x_2, t_2))-(t_{x_1, x_2}-t_1)-
           (t_{x_1, x_2}-t_2)|\le  C_2$$  if  $t_{x_1, x_2}>\max\{t_1, t_2\}$  and
           $|d^{(1)}((x_1, t_1), (x_2, t_2))-|t_1-t_2||\le C_2$      otherwise,
            for some constant $C_2$ depending only on     the Gromov hyperbolicity constant
           of      $S_1$.   Similarly
       $$|d^{(1)}((f_1(x_1), t_1), (f_1(x_2), t_2))-(t_{f_1(x_1), f_1(x_2)}-t_1)-
           (t_{f_1(x_1),  f_1(x_2)}-t_2)|\le C_2$$ if    $t_{f_1(x_1),  f_1(x_2)}>\max\{t_1, t_2\}$  and
           $|d^{(1)}((f_1(x_1), t_1), (f_1(x_2), t_2))-|t_1-t_2||\le C_2$   otherwise.
            It now suffices to show that there is a constant $C_3$ such that $|t_{f_1(x_1),  f_1(x_2)}-t_{x_1, x_2}|\le C_3$ for all $x_1, x_2\in N_1$.
         This follows from the  fact that $d_1(x_1, x_2)$ is  comparable with $e^{t_{x_1, x_2}}$    and  that $f_1$ is biLipschitz.

       \end{proof}

       {\bf{Completing the proof of Theorem \ref{theorem:QIrigid}}}.  
        Recall  that the    $D_i$-homogeneous distance  $d_i$   on $N_i$ is associated to   an inner product    $\langle,\rangle_i$  on $\mathfrak n_i$.
        Since  $\phi_i$ is an  automorphism of $N_i$ and also an isometry of $(N_i, d_i)$,
           Lemma \ref{bilip auto}  implies that   $d\phi_i$ is layer-preserving  and is an orthogonal transformation with respect to $\langle,\rangle_i$.
          Let $\langle,\rangle_0$ be the inner product on $T_e S=(\mathfrak n_1\times \mathfrak n_2)\rtimes \mathbb R$  that agrees with $\langle,\rangle_i$ on $\mathfrak n_i$, that satisfies $\langle(0,0,1), (0,0,1)\rangle_0=1$,   and such that
          $\mathfrak n_1$, $\mathfrak n_2$ and $\mathbb R$  are perpendicular to each other.
         Let $g_0$ be the  left invariant  Riemannian metric on $S$ determined by  $\langle,\rangle_0$.

          For each $\gamma\in \Gamma$, define a
           map $\Psi(\gamma):  S\rightarrow S$   by
         $$\Psi(\gamma)(n_1, n_2, t)= (f_1\circ \gamma_1\circ f_1^{-1}(n_1),  f_2\circ \gamma_2\circ f_2^{-1}(n_2), t+v_{1,\gamma}).$$   Since
          $f_i\circ \gamma_i\circ f_i^{-1}=L_{a_i}\circ e^{v_{i, \gamma}D_i}\circ \phi_i$,
           Lemma \ref{sol-compatible}    implies
         $\Psi(\gamma)(n_1, n_2, t)= L_{(a_1, a_2, v_{1, \gamma})}(\phi_1 n_1, \phi_2 n_2, t)$.
            The properties of $\phi_i$ imply that the map $S\rightarrow S$, $(n_1, n_2, t)\mapsto (\phi_1 n_1, \phi_2n_2,t)$ is an automorphism of $S$ and is an isometry
            of $(S,g_0)$.  It follows that  $\Psi(\gamma)$ is an isometry of $(S,g_0)$.

          Define $F: S\rightarrow S$  by
       $F(n_1, n_2, t)=(f_1(n_1), f_2(n_2), t)$ as in Lemma \ref{distance-sol}.   Then $F$ is a
          quasi-isometry of $S$.
         Notice that  $\Psi(\gamma)$   and  $F\circ \phi(\gamma)\circ F^{-1}$
           induce the same boundary map of $S_i$ . 
           It follows that $\Psi(\gamma)$ and  $F\circ \phi(\gamma)\circ F^{-1}$ are at a bounded distance from each other.   By replacing $F\circ \phi(\gamma)\circ F^{-1}$  with $\Psi(\gamma)$   we see that
          the original quasi-action of $\Gamma$ on $S$ is now quasi-conjugated to an isometric action of $\Gamma$ on $(S,g_0)$.  Since the original quasi-action is cobounded, the isometric action is cocompact and so $\Psi(\Gamma)$ is a uniform
             lattice in $\text{Isom}(S,g_0)$.  This finishes the proof when each $(N_i, D_i)$  $(i=1, 2$) is either Carnot or  Carnot-by-Carnot with $\dim(\mathfrak w_i)\ge 2$.

             If  $\dim(\mathfrak w_i)=1$ for at least one $i$ and $\Gamma$ is  amenable, then  Theorem \ref{main-uniform}   still applies and the above proof works.
             Now assume $\dim(\mathfrak w_i)=1$   and $\text{Isom}(S,g)$ admits a
  uniform  lattice   $\Gamma_0$   for some  left invariant Riemannian metric  $g$ on $S$.
    By Theorem 3 in \cite{W00}
   $\Gamma_0$ is virtually a  lattice in some simply connected solvable Lie group.  In particular, $\Gamma_0$ is a  finitely generated  amenable  group.
    It follows that $\Gamma$ is also amenable, being quasi-isometric to $\Gamma_0$.   Hence again  Theorem \ref{main-uniform}    applies and the above proof works.


         \qed

 \subsection{Quasi-isometric classification of a  class of solvable Lie groups}

  Here we provide the proofs of Theorem \ref{QIclassification}  and Theorem \ref{QIC-CarnotbyCarnot}.

 {\bf{Proof of Theorem \ref{QIclassification}}}.
 One direction is clear.  For the other direction, we assume $S_1,  S_2\in \mathcal S$   are quasi-isometric.  Then $S_1$ quasi-acts on $S_2$. Since $S_1$ is amenable,
 the proof of Theorem \ref{theorem:QIrigid}  applies (for $\Gamma:=S_1$, $S:=S_2$)
   and  we conclude that
   there is a left invariant Riemannian metric  $g_2$ on $S_2$ such that  the quasi-action of $S_1$ on $S_2$ is quasi-conjugate to an isometric action of $S_1$ on $(S_2, g_2)$.
    This induces a continuous homomorphism $\phi:  S_1\rightarrow \text{Isom}(S_2, g_2)$. Since a continuous homomorphism  between Lie groups is a Lie group homomorphism, we see that the isometric action of  $S_1$ on $(S_2, g_2)$ is a smooth action; that is, the corresponding map $F:  S_1\times S_2\rightarrow S_2$,  $F(s_1, s_2)=\phi(s_1)(s_2)$,   is a smooth map.   Fix some $x\in S_2$.
   Then  the map $f: S_1\rightarrow S_2$ given by $f(s_1)=F(s_1, x)=\phi(s_1)(x)$ is smooth.  It is easy to see that   $\phi(s_1') \circ f=f\circ L_{s_1'}$ for all $s_1'\in S_1$, where $ L_{s_1'}$   is the left translation of $S_1$ by $s_1'$.   This implies the
      differentials of the  map $f$ have constant rank. If this  rank  is less than $\text{dim}(S_1)$, then there exists  $h\in f^{-1}(x)\backslash\{e\}$.  Then all the powers $h^n$, $n\in \mathbb Z$, would fix $x\in S_2$.  This is a contradiction since
     $\{h^n|n\in \mathbb Z\}$ is unbounded in $S_1$, $\{\phi(h^n)(x)|n\in \mathbb Z\}=\{x\}$ is bounded in $S_2$, and the action of $S_1$ on $S_2$ is quasi-conjugate to the action of $S_1$ on itself by left translations.  Hence the rank of $f$ is at least $\text{dim}(S_1)$, which implies $\text{dim}(S_2)\ge \text{dim}(S_1)$.  By switching the roles of $S_1$ and $S_2$ we get the reverse inequality and so $\text{dim}(S_2)= \text{dim}(S_1)$.
     Hence $f$ is a local diffeomorphism from $S_1$ to $S_2$. The above argument also shows $f$ is injective  and so is a  diffeomorphism onto an open subset of $S_2$.  Since the image of $f$ is an orbit of the $S_1$ action on $S_2$, we see that $f$ must be surjective since otherwise $S_2$ would be a  disjoint union of  at least two open subsets.
      Consequently $f$ is a diffeomorphism  from $S_1$ to $S_2$.

       We next use an idea we found in   \cite{FLD21}, Proposition 2.1.
       Let $g_1$ be  the pullback Riemannian metric of $g_2$  by $f$. Then $f: (S_1, g_1)\rightarrow (S_2, g_2)$ is an isometry.   Finally the equality   $\phi(s_1') \circ f=f\circ L_{s_1'}$    implies
         $L_{s_1'}: (S_1, g_1)\rightarrow (S_1, g_1)$ is an isometry for any $s_1'\in S_1$. In other words, $g_1$ is left invariant.     Hence $S_1$ and $S_2$ can be made isometric.
          Since they are both of real type,    Theorem 4.21 in \cite{CKLGO21}
             implies they are isomorphic.

\qed

 By using Theorem \ref{NCHM} (proved next) and Corollary  1.2, \cite{DFX}
 instead of Theorem \ref{theorem:QIrigid}
    the above  argument yields  Theorem \ref{QIC-CarnotbyCarnot}.



\subsection{Quasi-isometric rigidity of quasi-actions on  certain Heintze groups}

      Finally we supply the proof of Theorem \ref{NCHM}.

     \noindent{\bf{Proof of Theorem \ref{NCHM}}}.
       Let $(N, D)$ be  Carnot-by-Carnot,  and $\Gamma$  a   group that quasi-acts coboundedly  on $N\rtimes_D \R$.
        We  further assume $\Gamma$ is amenable when $\dim(W)=1$.
         Then $\Gamma$ induces a    uniform quasisimilarity action  on $N$  such that every point in $N$ is a
       radial limit point (since the induced action of $\Gamma$ on the space of distinct pairs of $N$ is cocompact).  By   Theorem \ref{main-uniform}, there is a
         biLipschitz map $F_0$ of $N$ such that
         $F_0\Gamma F_0^{-1}\subset \text{Sim}(N, d_0)$,  where $d_0$ is a fixed
        maximally symmetric   $D$-homogeneous distance  on $N$.
         Recall  that  the  $D$-homogeneous distance  $d_0$ is   associated with  an inner product
          $\langle,\rangle_0$   on
          $\mathfrak n $.    
           Let
          $\langle,\rangle$ be the inner product on $T_e(N\rtimes_D \R)=\mathfrak n\times \mathbb R$ satisfying $\langle,\rangle|_{\mathfrak n \times \mathfrak n}=\langle,\rangle_0$, $\langle \mathfrak n, \{0\}\times \mathbb R\rangle=0$ and $\langle (0,1),(0,1)\rangle=1$.
            Let $g_0$ be the left invariant Riemannian metric on $N\rtimes_D \R$ determined by the inner product $\langle,\rangle$.  Let $\gamma\in \Gamma$.
            Recall that  $F_0\gamma F_0^{-1}$  acts on $N$ as an affine map and has the form  $F_0\gamma F_0^{-1}=L_{a_\gamma}\circ e^{t_\gamma D}\circ \phi_\gamma$ for some  $a_\gamma\in N$,   $t_\gamma\in \mathbb R$, where
            $L_{a_\gamma}$ is the left translation of $N$ by $a_\gamma$  and
            $\phi_\gamma$ is a
             graded automorphism such that $d\phi_\gamma$ is a linear isometry of
              $(\mathfrak n, \langle,\rangle_0)$.
                Now let $F_0\gamma F_0^{-1}$  act on  $N\rtimes_D \R$ by
             $F_0\gamma F_0^{-1}(n, t)=L_{(a_\gamma, t_\gamma)}(\phi_\gamma(n),t)$, where   $L_{(a_\gamma, t_\gamma)}$ is the left translation of $N\rtimes_D \R$  by
               $(a_\gamma, t_\gamma)$.
              It is easy to check that this defines an isometric action of $\Gamma$ on $(N\rtimes_D \R, g_0)$  that induces the given
                action   of     $F_0\Gamma F_0^{-1}$  on $N=\partial S\backslash\{\infty\}$.

      \qed

  \section{BiLipschitz maps of diagonal   Heintze pairs}\label{compatible}

       In this section we    study   individual  biLipschitz maps of   diagonal Heintze pairs (see section \ref{deri}).  We show that if a   biLipschitz map permutes the cosets of a connected graded normal  subgroup, then it has an expression (we call it  compatible expression) with some  nice properties, see Lemma \ref{sj}.



      For any  biLipschitz  map  $F: N\rightarrow N $
          and  $g\in N$, we  denote
          by $F_g:=  L_{F(g)^{-1}}\circ F\circ L_g$.    Note that $F_g(0)=0$.

       \begin{lemma}  \label{onfiber}
         Let $(\mathfrak n, D)$ be a  diagonal Heintze pair and $\mathfrak w $
            an ideal of $\mathfrak n$ such that $D(\mathfrak w)=\mathfrak w$.
         Let   $F: N\rightarrow N $   be  a  biLipschitz  map   
            that permutes the cosets of $W$, where $W$ is the connected Lie subgroup of $N$ with Lie algebra
           $\mathfrak w$.        In addition  assume for every $g\in N$, the map
       $F_g|_W:  W \to    W$ is an automorphism of $W$.    Then
       there is an automorphism $\phi: W\rightarrow W$
         such that    $d\phi: \mathfrak w\ra \mathfrak w$ is layer-preserving,
           $F(gw)=F(g) \phi(w)$ and
                             $(\chi_{{F_0(g)}}|_W)    \circ \phi=\phi\circ (\chi_{g}|_W)$  hold
                                for any $g\in N$, where $\chi_x$ denotes the conjugation by $x$.

       \end{lemma}

       \begin{remark}
           If we start with a uniform quasisimilarity group  $\Gamma$  of a  Carnot-by-Carnot group $N$    and $W$ is the connected Lie subgroup of $N$ with Lie algebra generated by the eigenspace of the smallest eigenvalue of $D$,
     we       can  apply the  fiber  Tukia  theorem (Theorem \ref{foliatedtheorem})    when $\dim(W)\ge 2$  such that
     every element in       a  biLipschitz  conjugate of $\Gamma$
     satisfies the assumption in the lemma. 
       \end{remark}


       \begin{proof}  By replacing $F$ with $F_0$ we may assume $F(0)=0$.
       Since $W$ is normal in $N$, $gW=Wg$ for any $g\in N$.
        The fact that $F$ permutes the cosets of $W$ implies that
            $F(gW)=F(g)W$ and $F(Wg)=W F(g)$.
          Hence  there are two functions
            $\phi_{ g}, \psi_{ g}: W\rightarrow W$ such that
              $F(gw)=F(g) \phi_g(w)$ and $F(wg)=\psi_g (w) F(g)$  for any $w\in W$.
       Notice that $\phi_g=F_g|_W$ and so    
       by assumption   
       is an automorphism of $W$.
          By definition $\psi_g(w) =F(wg)F(g)^{-1}=F(g\cdot g^{-1}wg)F(g)^{-1}
                    =F(g)\phi_g(g^{-1}wg)F(g)^{-1}$. It follows that
                    \begin{equation}\label{bgag}
                    \psi_g=(\chi_{F(g)}|_W)\circ \phi_g\circ (\chi_{g^{-1}}|_{W})
                    \end{equation}
                     is a composition of automorphisms of $W$  and so  is an automorphism of $W$.  Here we used the assumption that $W$ is normal in $N$ and so $\chi_x|_W$ is an automorphism of $W$ for any $x\in N$.


           We claim that           $\psi:=\psi_g$ is independent of  $g$.
                    Let $g_1,  g_2\in N$.  Then for any $w\in W$, $d(wg_1, wg_2)=
                     d(g_1, g_2)$. Since $F$ is $L$-biLipschitz for some $L\ge 1$,
                       we  have $d(F(wg_1), F(wg_2))\le L d(g_1, g_2)$.
                          Since $F(wg_i)=\psi_{g_i}(w) F(g_i)$,
                           we get
                            \begin{align*}
                            d(\psi_{g_1}(w), \psi_{g_2}(w))  & \le d(\psi_{g_1}(w), F(wg_1))
                            +d(F(wg_1), F(wg_2))+ d(F(wg_2), \psi_{g_2}(w))\\
                              &  \le d(0, F(g_1))+
                            L d(g_1, g_2)+d(F(g_2), 0)
                            \end{align*}
                             for all $w\in W$.     By Lemma \ref{irine}   we have  $\psi_{g_1}=\psi_{g_2}$.

                        We now    show that     $\phi:=\phi_g$   is also  independent of  $g$.
            Since $D(\mathfrak w)=\mathfrak w$,   the restriction of  a $D$-homogeneous distance to  $W$     is a $D|_{\mathfrak w}$-homogeneous distance on $W$.
            As the restriction of  a biLipschitz map, the automorphism  $\phi_g$ is a  biLipschitz map of $W$. 
               By Lemma \ref{bilip auto},
             $d\phi_g: \mathfrak w\ra \mathfrak w$ is layer preserving.    Write $\mathfrak w=W_{\mu_1}\oplus \cdots\oplus W_{\mu_m}$
              as the direct sum of eigenspaces of
             $D|_{\mathfrak w}$, where $0<\mu_1<\cdots< \mu_m$ are the distant eigenvalues of  $D|_{\mathfrak w}$.   Then  $d\phi_g(W_{\mu_j})=W_{\mu_j}$.
             Let $g, h\in N$.  We shall show that  for each $j$ the equality
              $d\phi_g|_{W_{\mu_j}}=d\phi_h|_{W_{\mu_j}}$  holds. To see this,
                notice that for any $i$ and $x\in N$,   if we denote  $ I_i=   \oplus_{k\ge i}W_{\mu_k}$,    then   (\ref{conjugationformula}) implies
                 $d\chi_x(I_i)\subset I_i$  
                  and  that   $d\chi_x$
                   induces the identity  map on $I_j/{I_{j+1}}\cong W_{\mu_j}$.  Hence   the map on $I_j/{I_{j+1}}$  induced by
                $d(\chi_{F(g)}|_W)\circ d\phi_g\circ d(\chi_{g^{-1}}|_{W})$  equals
                 $d\phi_g|_{W_{\mu_j}}$.  The same is true when $g$ is replaced by $h$.
                  Now  (\ref{bgag})  and the fact that $d\psi_g=d\psi_h$ implies
                   $d\phi_g|_{W_{\mu_j}}=d\phi_h|_{W_{\mu_j}}$.



                      Finally,          by picking $g=0$ in
                             (\ref{bgag})    we obtain $\phi=\psi$  as $F(0)=0$. 
                                The  equality   $(\chi_{{F_0(g)}}|_W)    \circ \phi=\phi\circ (\chi_{g}|_W)$ for any $g\in N$  also follows from     (\ref{bgag}).


     \end{proof}

                  Below when we say $F:\mathfrak n\ra\mathfrak n$ is a biLipschitz map,  we identify $\mathfrak n$ with $N$ and $\mathfrak n$ is equipped with a $D$-homogeneous distance.

               A self homeomorphism $F: G\to G$ of  a Lie group is called an affine map if
                $F=L_g\circ \phi$, where $\phi$ is an  automorphism  of $G$    and
                    $L_g$ is  left translation by $g\in G$.    We say $\phi$ is the automorphism part of  the affine map $F$.

                    Let $(\mathfrak n, D)$ and $\mathfrak w$ be as in  Lemma  \ref{onfiber}.  Denote by $\bar D: {\mathfrak n}/{\mathfrak w}\ra {\mathfrak n}/{\mathfrak w}$   the derivation of ${\mathfrak n}/{\mathfrak w}$ induced by $D$  and $\sigma(\bar D)$ the set of eigenvalues of $\bar D$.
                       Let $H\subset \mathfrak n$ be a graded subspace of $\mathfrak n$ complementary to $\mathfrak w$; that is, for each $\lambda\in \sigma(\bar D)$,
                   $H_\lambda\subset V_\lambda$ is a complementary linear subspace of
                    $W_\lambda$  in  $ V_\lambda$   (if $W_\lambda=\{0\}$, then  $H_\lambda= V_\lambda$), and $H=\oplus_{\lambda\in \sigma(\bar D)} H_{\lambda}$.
                  Notice  that for every  $g\in \mathfrak n$, there are unique
                     $h\in H$,  $w \in \mathfrak w$ such that $g=h*w$.



                  For $x\in \mathfrak n$, we use $\bar x$ to denote $\pi(x)$, where
                   $\pi: \mathfrak n\to \mathfrak n/\mathfrak w$ is the quotient map.
               For convenience, we introduce  the following terminology.

                  \begin{definition}\label{defcompatible}
                  Let $(\mathfrak n, D)$ be a  diagonal Heintze pair and $\mathfrak w $
            an ideal of $\mathfrak n$ such that $D(\mathfrak w)=\mathfrak w$.
         Let   $F: N\rightarrow N $   be  a  biLipschitz  map   that permutes the cosets of $W$, where $W$ is the connected Lie subgroup of $N$ with Lie algebra
           $\mathfrak w$.        In addition  assume that
            $F$ induces an affine map of $N/W$ and that
           for every $g\in N$, the map
       $F_g|_W:  W \to    W$ is an automorphism of $W$.
                    Let $\bar B$ be the automorphism part of the affine map of $N/W$ induced by $F$.
                    We say an expression
                    $F(h*w)=F(0)*Bh*Aw*A s(\bar h)$ ($h\in H$, $w\in \mathfrak w$),
                     with  $A=d\phi$, where $\phi$  is the automorphism   of $W$ from Lemma \ref{onfiber},
                      is a compatible expression of $F$ if the following hold:\newline
                     1. $B: H\ra \mathfrak n$ is a linear map satisfying $B(H_\lambda)\subset V_{\lambda}$ and $d\bar B\circ \pi|_H=\pi\circ B$,  where $\pi: \mathfrak n\ra \bar {\mathfrak n}$  is  the   quotient map;\newline
                      2.   $[Bh, Aw]=A[h,w]$ for any $h\in H$, $w\in \mathfrak w$;\newline
                      3.    $s$ is  a map $\mathfrak n/\mathfrak w\ra Z(\mathfrak w)$, where $Z(\mathfrak w)$ denotes the center of $\mathfrak w$.

                  \end{definition}

   We shall show (see Lemma \ref{drop5}) that conditions 1-2 above imply condition 3.  But to prove this we need the  existence
          of           a  compatible expression (Lemma \ref{sj}).

                In general a biLipschitz map $F$ does not have a unique  compatible expression.  The reason is that there might be more than  one linear map
                 $B: H\ra \mathfrak n$   satisfying Conditions 1 and 2 above. 
                    As a result, the map $s$ is also not unique.   However,
                    if  $\alpha$ denotes  the smallest eigenvalue of $\bar D$
                   and  $\pi_\lambda: \mathfrak w\ra W_\lambda$ is the projection with respect to the decomposition
                    $\mathfrak w=\oplus_{\lambda}W_\lambda$,  then
                    $\pi_\lambda\circ s$ for $ \lambda< \alpha$ is unique
                 since a change in $B$ only affects  $\pi_\lambda\circ s$ for $\lambda \ge \alpha$. 
                     Another way to see this is to notice  $(\pi_\lambda\circ s)(\bar h)=A^{-1}(\pi_\lambda(F(0)^{-1}*F(h)))$ for $ \lambda<\alpha$.


                  \begin{lemma}\label{sj}
                   Let $(\mathfrak n, D)$, $\mathfrak w$  and $F$ be as in Definition \ref{defcompatible}.
             Then $F$ has  a  compatible expression.

             \end{lemma}


                The proof of  Lemma \ref{sj} is tedious.
                  To improve readability we have placed the proof of Lemma \ref{sj} in   Appendix \ref{proof of compatible}.
                The main ingredient in the proof is the fact that
                  $\phi\circ \chi_g=\chi_{G(g)}\circ \phi$, where $G=F_0$.  See Lemma \ref{onfiber}.    We also  implicitly (and repeatedly) use the fact that
                  $[Z(\mathfrak w), \mathfrak n]\subset Z(\mathfrak w)$.   This  follows from  the Jacobi identity  and the fact that $\mathfrak w$ is an ideal of $\mathfrak n$.


          Notice that the order of  appearance  of  $Aw$ and $A s(\bar h)$  in (1) and (2) below are different.

          \begin{lemma}\label{drop5}
           Let $(\mathfrak n, D)$, $\mathfrak w$  and $F$ be as in Definition \ref{defcompatible}.
           Using the notation  $p=h*w \in \mathfrak{n}$ where $h\in H$, $w\in \mathfrak w$:
           \newline
          (1)    If
          an expression
                    $F(h*w)=F(0)*Bh*Aw*A s(\bar h)$
                     for $F$ satisfies conditions 1-2  of Definition \ref{defcompatible}, then it also satisfies  condition 3;\newline
                       (2)    If
          an expression
                    $F(h*w)=F(0)*Bh*A s(\bar h)*Aw$
                     for $F$ satisfies conditions 1-2  of Definition \ref{defcompatible}, then it also satisfies  condition 3 ($s(\bar h)\in Z(\mathfrak w)$) and so  $F(h*w)=F(0)*Bh*Aw* As(\bar h)$     is a    compatible expression.

          \end{lemma}

          \begin{proof}
            (1)  Let   $F(h*w)=F(0)*B_1h*Aw*A s_1(\bar h)$   be   a compatible expression.
           Condition 1 implies $B_1h-Bh\in \mathfrak w$ for all $h\in H$.
          From condition 2 we have
           $[Bh, Aw]=A[h,w]=[B_1 h, Aw]$ for all $h\in H$ and all $w\in \mathfrak w$.
            It follows that $[B_1h-Bh, Aw]=0$  and so $s'(h):=B_1h-Bh\in Z(\mathfrak w)$.
                   By using  the  BCH formula and the fact that $[\mathfrak n,  Z(\mathfrak w)]\subset  Z(\mathfrak w) $   we get
              $(-Bh)*B_1h\in  Z(\mathfrak w)$.  Finally from the two expressions for $F(h)$ we get  $As(\bar h)=(-Bh)*B_1h*As_1(\bar h)\in  Z(\mathfrak w)$.

              (2)   The same  proof as above also shows  in this case
               $s(\bar h)\in Z(\mathfrak w)$. This allows us  to switch
               $Aw$ and $As(\bar h)$  to get a compatible expression.

          \end{proof}

                   We observe that if $[Bh, Aw]=A[h,w]$ for all $h\in H$, $w\in \mathfrak w$, then 
                          \begin{equation}\label{cc}
                          Bh* Aw* (Bh)^{-1}=\sum_{i=0}^\infty \frac{1}{i!}(\text{ad}\,(Bh))^i(Aw)
                          =A(\sum_{i=0}^\infty \frac{1}{i!} (\text{ad}\,(h))^i(w))=A(h*w*h^{-1}).
                          \end{equation}


                  For later use (in Lemma \ref{fisauto}), we record the following lemma. The main point of the lemma is that the same linear map $B$ works for $F_p$.

                  \begin{lemma}\label{same B}
                  Let $F: \mathfrak n\ra \mathfrak n$ be  a biLipschitz map with a compatible expression
                   $F(h*w)=F(0)*Bh*Aw*A s(\bar h)$.  Then for any $p\in N$, the map $F_p$ admits a compatible expression of the form $F_p(h*w)=Bh*Aw*A \tilde{s}(\bar h)$  for some map  $\tilde{s}: \mathfrak n/\mathfrak w \ra Z(\mathfrak w)$.

                  \end{lemma}

                  \begin{proof}
                  Note that   $(F_p)_q=F_{pq}$   for any $p, q\in N$ so the same $A$ works for $F_p$. By Lemma \ref{onfiber}
                      we can write
                   $F_p(h*w)=F_p(h)*Aw$.  Since $F_p$ induces the automorphism $\bar B$ on $N/W$, we can write  $F_p(h)= Bh*A\tilde s(\bar h)$ for some map
                    $\tilde s: \bar{\mathfrak n}\ra \mathfrak w$. So we have $F_p(h*w)=Bh*A\tilde s(\bar h)*Aw$ with Conditions 1 and 2 of  Definition \ref{defcompatible}    satisfied.
                     By  Lemma \ref{drop5} (2)
                  $F_p(h*w)=Bh*Aw*A \tilde{s}(\bar h)$    is a compatible expression for $F_p$.


                \end{proof}

                  \section{ Characterization of biLipschitz shear maps}\label{shear-bilipschitz}

           {\bf{In Sections \ref{shear-bilipschitz}--\ref{cstructure}   and Appendix 
           \ref{proof of compatible} we shall often   identify a simply connected nilpotent Lie group with 
                  its Lie algebra  via the exponential map.}}  (See Section \ref{nilpprelim}.)

        From now on, unless stated otherwise,    $(N,D)$  will be  Carnot-by-Carnot and $W$ will be the Lie subgroup of $N$ with Lie algebra  $\mathfrak w$ generated by the  eigenspace of the smallest eigenvalue of $D$.  
        In this section we will  characterize     continuous  maps $s: N/W\ra  W$ such that the shear    maps
      $F: (N, d) \ra (N, d) $ given by $F(g)=g\,  s(gW) $       are 
          biLipschitz.    BiLipschitz  shear maps will be used to conjugate a uniform quasisimilarity group into a similarity group.   We are only interested in the case $s(0)=0$. In this case, $F(0)=0$.  
           We first introduce a useful differential one form on Carnot groups, 
               which will  be used in  the 
           characterization  of biLipschitz shear maps.

       \subsection{Horizontal tautological one form on Carnot groups}

        We first recall the tautological one form on  Lie groups and then define the horizontal tautological one form on Carnot groups. 
         Finally  we give an expression  for the horizontal tautological one form  in exponential coordinates.
       
         For  a Lie group $G$ with Lie algebra $\mathfrak g=T_eG$, the tautological one form  $\theta$ is a $\mathfrak g$ valued left invariant one form on $G$ defined as follows.    For each $x\in G$,  $\theta_x: T_xG\ra \mathfrak g$ is the linear map given by
          $\theta_x(X)=\tilde X_e$, where $\tilde X$ is the left invariant vector field on $G$ satisfying $\tilde X_x=X$ and $\tilde X_g$  denotes  the value of $\tilde  X$ at $g\in G$.  The name  ``tautological'' comes from the fact that at each $x\in G$,  $\theta_x$ is the identity map if one identifies the tangent vectors  with the corresponding  left invariant vector fields.
          
         The  horizontal tautological one   form is a counterpart of tautological one  form  in the setting of Carnot groups.
          Let $G$ be a  Carnot group with Lie algebra  grading $\mathfrak g=\oplus_{j=1}^n V_j$.    Then 
            the horizontal tautological one form $\theta_H$ is a $V_1$-valued left invariant one form on $G$.   
            Let $\pi_1: \mathfrak g\ra V_1$ be the projection with respect to the decomposition $\mathfrak g=\oplus_{j=1}^n V_j$.   
          Define $\theta_H=\pi_1\circ \theta$; that is, for each $x\in G$,     $(\theta_H)_x: T_x G\ra V_1$  is defined to be the composition $\pi_1\circ \theta_x$.

       Let $e_1, \cdots, e_{k_n}$ be  a basis of $\mathfrak g=T_eG$ such that $\{e_1, \cdots, e_{k_1}\}$ is  a basis of $V_1$.
        For each $i$, let $X_i$ be the left invariant vector field on $G$  that equals $e_i$ at $e$.    Let $\{\theta_1, \cdots, \theta_{k_n}\}$ be  the basis of   the space of left invariant one forms on $G$ dual to $\{e_1, \cdots, e_{k_n}\}$.  So we have 
         $\theta_i(X_j)=\delta_{ij}$.     Now it is easy to see that $\theta_H=\sum_{i=1}^{k_1} \theta_i  e_i$ as 
         $(\sum_{i=1}^{k_1} \theta_i  e_i)(\sum_{j=1}^{k_n} a_j X_j)=\sum_{i=1}^{k_1} a_i  e_i$.
         If we use the exponential coordinates  $\mathbb R^{\dim \mathfrak g}\ra G$, $(x_1, \cdots, x_{k_n})\mapsto
           \text{exp}(\sum x_j e_j)$, then $\theta_i$ is given by $\theta_i=dx_i$ for $1\le i\le k_1$.    Hence $\theta_H$ has the expression $\theta_H=\sum_{i=1}^{k_1} dx_i e_i$.  
         
        \subsection{Structure of Lie algebra when $\alpha$ is not an integer}\label{structure of n}

        We return to the case that $(N,D)$ is Carnot-by-Carnot. 
        We rescale so that the smallest eigenvalue of $D$ is $1$. Let $\alpha>1$ be the smallest eigenvalue of the induced derivation $\bar D: {\mathfrak n}/{\mathfrak w}\ra 
        {\mathfrak n}/{\mathfrak w}$.  
        Under the assumption that $\alpha$ is not an integer    we  clarify  the algebraic structure of $\mathfrak n$ and show that $\mathfrak n$ is a central product. 
       
       Recall $\bar{\mathfrak n}=\mathfrak n/\mathfrak w=\bar V_1\oplus\cdots
       \oplus \bar V_m$  is assumed to be a Carnot algebra   and $\pi: \mathfrak n\ra \bar {\mathfrak n}$ is the quotient map.  
     \begin{Le}\label{notinteger}
     (1) If $\alpha>1$ is irrational, then $\mathfrak  n$ has an ideal $\mathfrak h$ that is mapped by $\pi$ isomorphically onto  
     $\bar{\mathfrak n}$. In particular, $\mathfrak n$ is the direct sum of two ideals $\mathfrak w$ and $\mathfrak h$.\newline
      (2) Suppose $\alpha>1$ is rational but not an integer.  Let $k_0=\min\{ k\in \mathbb N| \,k\alpha\;\; \text{is an integer}\}$.  
      Then there exist a graded central ideal $I$ of $\mathfrak w$ contained in $\oplus_lW_{lk_0\alpha}$,   a Carnot algebra $\mathfrak h=\oplus_j H_j$ with a graded central ideal $J$ 
       contained in $\oplus_l H_{lk_0}$, and a linear isomorphism $\phi: I\ra J$ satisfying 
      $\phi(I\cap W_{lk_0\alpha})=J\cap H_{lk_0}$, such that $\mathfrak n$ is isomorphic to the central product
       $\mathfrak w\times_\phi {\mathfrak h}=(\mathfrak w\oplus {\mathfrak h})/K$, where $K=\{(x, \phi(x))|x\in I\}$. 
      
     \end{Le}
     
  \begin{proof}
  Let $H_1=V_\alpha$  and $\mathfrak h$ be the Lie subalgebra  of $\mathfrak n$  generated by $H_1$.     Then $\pi(H_1)=\bar V_1$
    and $\pi(\mathfrak h)=\bar{\mathfrak n}$.   Furthermore, the property  $[V_a, V_b]\subset V_{a+b}$ implies $\mathfrak h$ is a Carnot algebra.    \newline
  (1) Assume $\alpha$ is irrational.  Since $\mathfrak h\subset \oplus_l V_{l\alpha}$
    and $\mathfrak w\subset \oplus_{j\ge 1}V_j$, we have $\mathfrak w \cap \mathfrak h=\{0\}$.  
  As $\mathfrak w $ 
   is an ideal of $\mathfrak n$, the property $[V_a, V_b]\subset V_{a+b}$ implies
   $[\mathfrak w, \mathfrak h]=0$.  It follows that  $\mathfrak h$ is an ideal of $\mathfrak n$ and $\mathfrak n=\mathfrak w\oplus \mathfrak h$ is a direct sum of  two ideals.

  (2) Suppose $\alpha>1$ is rational but not an integer. The fact $[H_1, W_1]\subset V_{1+\alpha}\cap \mathfrak w=\{0\}$ implies 
    $[\mathfrak w, \mathfrak h]=0$  and so $\mathfrak h$ is also an ideal of $\mathfrak n$. However in this case $\mathfrak h$ and $\mathfrak w$ may have nontrivial intersection. The fact $[\mathfrak w,\mathfrak h]=0$ implies $\mathfrak w\cap \mathfrak h$ is central in $\mathfrak n$ and so is central in both $ \mathfrak w$ and $\mathfrak h$. Define $f: \mathfrak w\oplus \mathfrak h\ra \mathfrak n$ by   $f(w,h)=w+h$.      Then $f$ is a surjective Lie algebra homomorphism with kernel
     $f^{-1}(0)=\{(w,h)| w\in \mathfrak w\cap \mathfrak h, h=-w\}$.  Set $I=J=\mathfrak w\cap \mathfrak h$ and define $\phi: I\ra J$ by $\phi(w)=-w$. Then 
       $\mathfrak n\cong\mathfrak w\times_\phi \mathfrak h$.  Finally   as the intersection of two graded ideals $\mathfrak w$, $\mathfrak h$ of $\mathfrak n$,  $\mathfrak w\cap \mathfrak h$ ($=I$=$J$)   is  a graded ideal in both $\mathfrak w$ and $\mathfrak h$.


  \end{proof}
       
       Since $[\mathfrak w, \mathfrak h]=0$ and 
       $\mathfrak n=\mathfrak w+ \mathfrak h$, we see that $Z(\mathfrak w)$ lies in the center of $\mathfrak n$.

       \subsection{Characterization  of biLipschitz shear maps}\label{shear-sec}

       Let $s:  N/W\ra W$ be a map  satisfying $s(0)=0$  and
        $F: (N, d) \ra (N, d) $ be given by $F(g)=g s(gW)$.  
       
  \begin{lemma}\label{z(w)}
  Assume $F$ as above is biLipschitz. Then  $s$ takes values in the center $Z(W)$ of $W$.
  \end{lemma}
  
   \begin{proof}   As  $s(gW)\in W$,  $F$ maps each coset of $W$ to itself.  
   Let $g\in N$ and  $A_g:  W\to W$ be given by $A_g=F_g|_W$.  
    Observe that $A_g$ is an inner automorphism of $W$: $A_g(w)=s(gW)^{-1}ws(gW)$.   As $F$ is biLipschitz, so is $A_g$.   A biLipschitz  automorphism of a Carnot group is necessarily graded (see Lemma \ref{bilip auto})   
      and so is completely determined by its action on the first layer.  On the other hand, an inner automorphism induces  the trivial  map  on $\mathfrak w/[\mathfrak w,\mathfrak w]$.   It follows that $A_g$ is the trivial automorphism and therefore  $s(gW)$ must lie in $Z(W)$.

   \end{proof}            
       

       We need to introduce some function spaces before we can  give the statement of the    characterization.  
       
       {\bf The spaces $\mathcal H_j$.}  Let $\pi: \mathfrak n\ra \mathfrak n/\mathfrak w$ be the natural projection.
          For   any $\bar X\in  \mathfrak n/\mathfrak w$  and  $z\in Z(\mathfrak w)$, we define $[\bar X, z]:=[X, z]$, where $X\in \mathfrak n$ is such that 
           $\pi(X)=\bar X$.   This is well-defined since    $z\in Z(\mathfrak w)$  and different choices of $X$ differ by an element in $\mathfrak w$.  Notice  $[\bar X, z]\in Z(\mathfrak w)$.  
       
       Let $\theta_H$ be the horizontal tautological one form on $\mathfrak n/\mathfrak w$.  
          For any $j\ge 1$,  denote by  $P_j$ the space of all continuous maps  $c: \mathfrak n/\mathfrak w\ra Z_j(\mathfrak w)$   (recall $Z_j(\mathfrak w)=Z(\mathfrak w)\cap W_j$) satisfying  $c(0)=0$ and 
          $\int_\gamma [c(x),  \theta_H (x)]=0  $ for all closed horizontal curves  $\gamma$ in $\mathfrak n/\mathfrak w$.  
           Here we are using our definition of bracket $[z, \bar X]$  for $z\in Z(\mathfrak w)$, $\bar X\in \mathfrak n/\mathfrak w$.    Notice $[c(x),  \theta_H (x)]\in Z_{j+\alpha}(\mathfrak w)$ as $c(x)\in Z_j(\mathfrak w)$ and $\theta_H(x)\in \bar V_1$.  
       For any $c\in P_j$,   define a  map
       $c^{(1)}:  \mathfrak n/\mathfrak w\ra Z_{j+\alpha}(\mathfrak w)$   by
        $c^{(1)}(p)=\int_\gamma  [c(x), \theta_H (x)]$, where $\gamma$ is any horizontal path from $0$  to $p$. This is well-defined by the definition of $P_j$.    If $c^{(1)}\in P_{j+\alpha}$, then  we define 
        $c^{(2)}=(c^{(1)})^{(1)}$.   Similarly,  we can define 
        $c^{(k)}:      \mathfrak n/\mathfrak w\ra Z_{j+k\alpha}(\mathfrak w)$ if 
        $c^{(k-1)}\in P_{j+(k-1)\alpha}$.   
        
        For each  integer $1\le j\le \alpha$, let $E_j$ be the space of    $\frac{j}{\alpha}$ -Holder  continuous maps $c:  \mathfrak n/\mathfrak w\ra Z_j(\mathfrak w)$  satisfying $c(0)=0$,  and    $\mathcal H_j$ be the set of elements $c\in E_j$  such that $c^{(k)}$ is defined  for all
            $k\ge 1$ (i.e.,   $c^{(k-1)} \in P_{j+(k-1)\alpha}$).  Here the metric on $\mathfrak n/\mathfrak w$   is a Carnot metric and the metric on  $Z_j(\mathfrak w)$   is an Euclidean metric. 
             Note that $c^{(k)}\equiv 0$  for large enough $k$ since $\mathfrak n$ is nilpotent.  
        
     {\bf Special case.}   We notice that $\mathcal H_j=E_j$  for all $1\le j<\alpha$ when $\alpha$ is not an integer. This is because in this case $[Z(\mathfrak w),  \mathfrak n]=0$ and so $c^{(i)}\equiv 0$     for all  $c\in E_j$ and all $i\ge 1$.

        Given a continuous map  $s: \mathfrak n/\mathfrak w \ra Z(\mathfrak w)$ satisfying $s(0)=0$,  define 
       $K: \mathfrak n/\mathfrak w \times \mathfrak n/\mathfrak w   \ra Z(\mathfrak w)$  by  
       $$K(\bar g_1, \bar g_2)=s(\bar g_2)*(g_1^{-1}*g_2)^{-1}*(-s(\bar g_1)) *(g_1^{-1}*g_2).$$   Notice that $K$ is well-defined as $s$ takes values in $ Z(\mathfrak w)$.   Since both $s(\bar g_2)$ and $(g_1^{-1}*g_2)^{-1}*(-s(\bar g_1)) *(g_1^{-1}*g_2)$  lie in $Z(\mathfrak w)$, the BCH formula implies 
          $K(\bar g_1, \bar g_2)=s(\bar g_2)+(g_1^{-1}*g_2)^{-1}*(-s(\bar g_1)) *(g_1^{-1}*g_2)$.

        \begin{lemma}\label{shear-bilip-lemma}
          Write $s=\sum_j s_j$, where $s_j: \mathfrak n/\mathfrak w \ra Z_j(\mathfrak w)$   is the $W_j$ component of $s$. \newline
        (1)   Assume $\alpha$ is not an integer.  
          If there is a constant $C>0$ such that 
             $|\pi_{j}(K(\bar g_1, \bar g_2))|^{\frac{1}{j}}
     \le C d^{\frac{1}{\alpha}}_{CC}(\bar g_1,   \bar g_2)$ for  all $j \ge 1$ and all $\bar g_1, \bar g_2\in 
            \mathfrak n/\mathfrak w$,   
        then  $s_j\in E_j$ for $1\le j<\alpha$ and $s_j\equiv 0$ for $j>\alpha$;\newline
       (2)  Assume $\alpha$ is an integer and $j_0$ is an integer satisfying $1\le j_0\le \alpha$.      If  there is a constant $C>0$ such that 
             $|\pi_{k\alpha+j_0}(K(\bar g_1, \bar g_2))|^{\frac{1}{k\alpha+j_0}}
     \le C d^{\frac{1}{\alpha}}_{CC}(\bar g_1,   \bar g_2)$ for  all $k \ge 0$ and all $\bar g_1, \bar g_2\in 
            \mathfrak n/\mathfrak w$,      then  $s_{j_0}\in \mathcal H_{j_0}$   and 
            $s_{k\alpha+j_0}=s_{j_0}^{(k)}$ for all $k\ge 1$. 
        \end{lemma}
        
        \begin{proof}  
          Notice 
         \begin{align*}K(\bar g_1, \bar g_2)
         =s(\bar g_2)-s(\bar g_1)+[-(g_1^{-1}*g_2), -s(\bar g_1)]+
      \sum_{k\ge 2}\frac{1}{k!}(ad(-(g_1^{-1}*g_2)))^k(-s(\bar g_1)).
      \end{align*}
      Since $s$ takes values in $Z(\mathfrak w)$,     using our definition of bracket $[\bar X, z]$ with $\bar X\in \mathfrak n/\mathfrak w$, $z\in Z(\mathfrak w)$, 
       we can write
       $$K(\bar g_1, \bar g_2)=s(\bar g_2)-s(\bar g_1)+[-(\bar g_1^{-1}*\bar g_2), -s(\bar g_1)]+
      \sum_{k\ge 2}\frac{1}{k!}(ad(-(\bar g_1^{-1}*\bar g_2)))^k(-s(\bar g_1)).$$


                (1) Assume $\alpha$ is not an integer.  Since $[Z(\mathfrak w), \mathfrak n]=0$ in this case we have  
                $K(\bar g_1, \bar g_2)=s(\bar g_2)-s(\bar g_1)$.   The assumption implies 
                $|s_j(\bar g_2)-s_j(\bar g_1)|^{\frac{1}{j}}\le C d^{\frac{1}{\alpha}}_{CC}(\bar g_1,   \bar g_2)$ for  all $j \ge 1$ and all $\bar g_1, \bar g_2\in 
            \mathfrak n/\mathfrak w$. 
              When $1\le j<\alpha$, this implies $s_j\in E_j$.   
             When $j>\alpha$, this implies $s_j$ is a constant function and so $s_j\equiv 0$ as $s_j(0)=0$.

      (2) Assume $\alpha$ is an integer.       
      As    $[-(\bar g_1^{-1}*\bar g_2), -s(\bar g_1)]\in\oplus_{i\ge \alpha+1}W_i$,  
             we have  $\pi_jK(\bar g_1, \bar g_2)=s_j(\bar g_2)-s_j(\bar g_1)$ for $1\le j\le \alpha$.   
      The assumption implies $s_{j_0}\in E_{j_0}$.            
         Let $\bar \gamma: [a,b]\ra \mathfrak n/\mathfrak w$ be a rectifiable   horizontal curve parametrized by arc length.     Then  $d_{CC}(\bar \gamma(t_1),  \bar \gamma(t_2))\le |t_2-t_1|$ for any  $a\le t_1, t_2\le b$.     
            Write   $\bar\gamma(t_1)^{-1}*\bar \gamma(t_2)=\sum_j \bar p_j$ with   
           $\bar p_j=\bar p_j(t_1, t_2)\in \bar V_j$.  
           Since $\bar \gamma$ is a horizontal curve, we have $\bar p_j=o(t_2-t_1)$   for $j\ge 2$  (as $t_2\ra t_1$).   If $\bar \gamma$ has tangent at $t_1$, then  
           $\frac{\bar p_1}{t_2-t_1}\ra \bar\gamma'(t_1)$ as $t_2\ra t_1$.

            For $k\ge 1$,  
            we have  
            $$\pi_{k\alpha+j_0}K(\bar \gamma(t_1), \bar \gamma(t_2)) =s_{k\alpha+j_0}(\bar \gamma(t_2))-s_{k\alpha+j_0}(\bar \gamma(t_1))
            +[\bar p_1, s_{(k-1)\alpha+j_0}(\bar \gamma(t_1))]+o(t_2-t_1). $$
              Since  $d_{CC}(\bar \gamma(t_1),  \bar \gamma(t_2))\le |t_2-t_1|$,  
             the   assumption  implies that
               $$\frac{\pi_{k\alpha+j_0}K(\bar \gamma(t_1), \bar \gamma(t_2)) }{t_2-t_1}\ra 0$$ as $t_2\ra t_1$.  
             If $\bar \gamma$ has tangent at $t_1$, then 
             $$\frac{d}{dt}s_{k\alpha+j_0}(\bar \gamma(t))|_{t=t_1}=
             \lim_{t_2\ra t_1}\frac{s_{k\alpha+j_0}(\bar \gamma(t_2))-s_{k\alpha+j_0}(\bar \gamma(t_1))}{t_2-t_1}= [s_{(k-1)\alpha+j_0}(\bar \gamma(t_1)),   \bar\gamma'(t_1)].$$
        By the fundamental theorem of calculus, 
        we have
        \begin{equation}\label{path inde}
        s_{k\alpha+j_0}(\bar \gamma(b))-s_{k\alpha+j_0}(\bar \gamma(a))=\int_a^b
        [s_{(k-1)\alpha+j_0}(\bar \gamma(t)), \bar \gamma'(t)]dt=\int_{\bar\gamma}
         [s_{(k-1)\alpha+j_0}(x),  \theta_H(x)].
        \end{equation}
        In particular,  $\int_{\bar\gamma}
         [s_{(k-1)\alpha+j_0}(x),  \theta_H(x)]=0$ when $\bar \gamma$ is a closed horizontal curve in $\mathfrak n/\mathfrak w$  and so $s_{(k-1)\alpha+j_0}\in P_{(k-1)\alpha+j_0}$.   Furthermore, (\ref{path inde}) also shows $s_{k\alpha+j_0}=s_{(k-1)\alpha+j_0}^{(1)}$.  
        By induction this shows that $s_{j_0}\in \mathcal H_{j_0}$
          and  $s_{k\alpha+j_0}=s_{j_0}^{(k)}$ for all $k\ge 1$. 
        
        \end{proof}

        \begin{proposition}\label{shear-bilip}   
        Let $s: \mathfrak n/\mathfrak w \ra Z(\mathfrak w)$ be a continuous map  satisfying $s(0)=0$  and $F:\mathfrak n \ra \mathfrak n$, $F(g)=g*s(\bar g)$ be the associated  shear map. Write $s=\sum_j s_j$, where $s_j: \mathfrak n/\mathfrak w \ra Z_j(\mathfrak w)$   is the $W_j$ component of $s$.   
          Assume     $F$ is biLipschitz.\newline
           (1) If $\alpha$ is not an integer, then $s_j\in E_j$ for $1\le j<\alpha$ and $s_j\equiv 0$ for $j>\alpha$;\newline
        (2) If $\alpha$ is an integer,     then  $s_j\in \mathcal H_j$ for each $1\le j\le \alpha$
           and  $s_{k\alpha+j}=s_j^{(k)}$ for all $k\ge 1$ and $1\le j\le \alpha$.
        
        \end{proposition}

\begin{proof}
           There is some $L>0$ such that 
           $d(F(g_1), F(g_2))\le L\cdot d(g_1, g_2)$ for any $g_1, g_2\in \mathfrak n$. 
        Let $g_1,   g_2\in \mathfrak n$.  Then $F(g_1)=g_1*s(\bar g_1)$  and $F(g_2)=g_2*s(\bar g_2)$.   
     Since   $$(F(g_1))^{-1}*F(g_2)=(s(\bar g_1))^{-1} *g_1^{-1}*g_2*s(\bar g_2),$$
       we have 
       $K(\bar g_1, \bar g_2)
       =(g_1^{-1}*g_2)^{-1}*(F(g_1))^{-1}*F(g_2)$.  
        It follows that 
        \begin{align*}d(0, K(\bar g_1, \bar g_2))
        &=
        d(g_1^{-1}*g_2, (F(g_1))^{-1}*F(g_2))\\
        & \le d(g_1^{-1}*g_2, 0)
        +d(0, (F(g_1)^{-1}*F(g_2)))\\
        &=d(g_1, g_2)+d(F(g_1), F(g_2))\le (L+1) d(g_1, g_2).
        \end{align*}
         Hence     for any $i\ge 1$ we have
       \begin{align*}  |\pi_{i}(K(\bar g_1, \bar g_2))|^{\frac{1}{i}}
      \le  d(0, K(\bar g_1, \bar g_2))
      & \le (L+1) \inf_{w_1, w_2\in \mathfrak w}d(g_1*w_1, g_2*w_2)\\
      &=(L+1) d(g_1*\mathfrak w, g_2*\mathfrak w)\\
         &    \le  (L+1)C d_{CC}(\bar g_1,   \bar g_2)^{\frac{1}{\alpha}},
       \end{align*}
         for some $C>0$ as  the distance between cosets $d(g_1*\mathfrak w, g_2 *\mathfrak w)$ is comparable with 
           $d_{CC}(\bar g_1,   \bar g_2)^{\frac{1}{\alpha}}$.   
           The proposition now follows from Lemma \ref{shear-bilip-lemma}.  
           
           \end{proof}


       


        \begin{lemma}\label{zigzag path}
        Given any Carnot group $G$, there exists an integer $n_0$ and a constant $C>0$ 
         with the following property:
          for any $g\in G$, there is a horizontal curve $c$ from $0$ to $g$ 
           that is a concatenation of at most $n_0$ horizontal line segments  and such that 
           the length of $c$  is at most $C\cdot d(0,g)$.  Here a horizontal line segment in $G$ is  a path  $c:[0,a]\ra G$ of the form $c(t)=g \,{\text{exp}}(tX)$, where  $X$ lies in the first layer of the Lie algebra of $G$.  
        
        \end{lemma}
        
        \begin{proof}
        This follows from the proof of  Proposition 2.26 in \cite{BLD}, see also  Chapter 8 
        of \cite{AS04} (in particular Theorem 8.1 and
Proposition 8.5).

        \end{proof}
        
     Recall that $H_1\subset  V_\alpha$ is a linear subspace complimentary to $W_\alpha$.

        \begin{lemma}\label{conjugate}
          There is a constant $C>0$ depending only on  $N$ with the following property. 
        For any   $b_1, b_2>0$,  and any    $w\in Z(\mathfrak w)$,  $h\in H_1$ satisfying 
          $d(0,w)\le b_1^{\frac{1}{\alpha}}$ and $|h|<b_2$, the inequality $d(0, (-h)*w*h)\le C\cdot (\max\{b_1, b_2\})^{\frac{1}{\alpha}}$  holds.

        \end{lemma}
        
        \begin{proof}
        Write $w=w_1+\cdots +w_m$ with $w_j\in W_j$.  The assumption
          $d(0,w)\le b_1^{\frac{1}{\alpha}}$   implies $|w_j|\le b_1^{\frac{j}{\alpha}}$
        for each $j$.   We calculate
         $$(-h)*w*h=w+\sum_{k\ge 1}\frac{(-1)^k}{k!}(ad(h))^k w.$$
          As $N$ is nilpotent, the above is a finite sum. 
           We have 
         $$\pi_j((-h)*w*h)=w_j+\sum_{k\ge 1, j-k\alpha\ge 1}\frac{(-1)^k}{k!}(ad(h))^k w_{j-k\alpha}.$$
         By using $|[X,Y]|\le C_0\cdot |X|\cdot |Y|$, we get  (with $b=\max\{b_1, b_2\}$) 
          $$\huge|\frac{(-1)^k}{k!}(ad(h))^k w_{j-k\alpha}\huge|
          \le \frac{C_0^k}{k!} |h|^k |w_{j-k\alpha}|
          \le \frac{C_0^k}{k!} b_2^{{k}}b_1^{\frac{j-k\alpha}{\alpha}}
          \le \frac{C_0^k}{k!} b^{\frac{j}{\alpha}}.
          $$
        From this it is clear that $|\pi_j((-h)*w*h)|\le C b^{\frac{j}{\alpha}}$  for some $C$ depending only on $N$ and the lemma follows.

        \end{proof}


            \begin{proposition}\label{construction}   
            Let $s_j\in \mathcal H_j$  be given  for each integer  $1\le j\le \alpha$.\newline
             (1) If  $\alpha$ is not an integer,    set $ s_j\equiv 0$ for $j>\alpha$ and $s=\sum_j s_j$;  \newline
               (2)  If  $\alpha$ is an integer,    set $ s_{k\alpha+j}= s_j^{(k)}$ for all $k \ge 1$   and $s=\sum_j s_j$. \newline
               Then  the  shear map  associated to $s$ is biLipschitz. 
             
            
            \end{proposition}
            
            \begin{proof}
           We shall prove that    the  shear map  $F$ associated to $s$ 
             is Lipschitz.   The same argument shows $F^{-1}$ is also Lipschitz since $F^{-1}$ is the shear map associated to     $-s$.  

             By the triangle inequality,
             \begin{align*}
             d(F(g_1), F(g_2))&=d(0, F(g_1)^{-1}*F(g_2))\\
             &=d((g_1^{-1}*g_2)^{-1}, (g_1^{-1}*g_2)^{-1}*F(g_1)^{-1}*F(g_2))\\
             &\le d((g_1^{-1}*g_2)^{-1}, 0)+d(0, (g_1^{-1}*g_2)^{-1}*F(g_1)^{-1}*F(g_2))\\
             &=d(g_1, g_2)+d(0, K(\bar g_1, \bar g_2)).     
             \end{align*}
             Hence it  suffices to show there is some $C>0$ such that
             \begin{equation}\label{ine5.3}
             d(0, K(\bar g_1, \bar g_2))
             \le C\cdot d(g_1, g_2)\;\;\; \forall g_1, g_2\in \mathfrak n.
             \end{equation}
             
       (1) Assume $\alpha$ is not an integer. In this  case we have 
       $    K(\bar g_1, \bar g_2)=s(\bar g_2)-s(\bar g_1)$  and   (\ref{ine5.3}) follows from the assumption.  
             
             (2) Assume $\alpha$ is  an integer.
             We first show  (\ref{ine5.3}) in the case when $\bar g_1^{-1}*\bar g_2$ lies in $\bar V_1$.
             So assume $\bar g_1^{-1}*\bar g_2=t\bar h\in \bar V_1$  with $|\bar h|=1$ and $t>0$.    
                We have 
              \begin{align*}
         K(\bar g_1, \bar g_2)
         &=s(\bar g_2)-s(\bar g_1)+[-t\bar h, -s(\bar g_1)]+
      \sum_{k\ge 2}\frac{1}{k!}(ad(-t\bar h))^k(-s(\bar g_1))\\
      &=s(\bar g_2)-s(\bar g_1)+t[\bar h, s(\bar g_1)]+\sum_{k\ge 2}\frac{(-1)^{k+1}t^k}{k!}(ad(\bar h))^ks(\bar g_1).
      \end{align*}
             
            Since in our case $d(g_1, g_2)\ge d_{CC}(\bar g_1, \bar g_2)^{\frac{1}{\alpha}}=t^{\frac{1}{\alpha}}$, it suffice to show  that there is some constant 
             $C>0$    such that for each $j\ge 1$, the   following inequality  holds:
            \begin{equation}\label{e5.12}
            |\pi_jK(\bar g_1, \bar g_2)|^{\frac{1}{j}}\le C\cdot t^{\frac{1}{\alpha}}.
            \end{equation}

            When $1\le j\le \alpha$,  
            $\pi_jK(\bar g_1, \bar g_2)=s_j(\bar g_2)-s_j(\bar g_1)$  and  so  
               (\ref{e5.12})  holds in this case since by assumption $s_j\in E_j$.
             Now let $k_0\ge 1$ and 
             suppose there is some constant $C>0$ such that 
             $|\pi_{k\alpha+j}K(\bar g_1, \bar g_2)|^{\frac{1}{k\alpha+j}}\le C\cdot t^{\frac{1}{\alpha}}$  holds  for all $k<k_0$ and all $1\le j\le \alpha$.  We shall show  a similar inequality holds for $k=k_0$ (with a different constant).
              We have
            \begin{align*}
        & \pi_{k_0\alpha+j}K(\bar g_1,   \bar g_2)\\
        &=    s_{k_0\alpha+j}(\bar g_2)-s_{k_0\alpha+j}(\bar g_1)+t[\bar h, s_{(k_0-1)\alpha+j}(\bar g_1)]+\sum_{k= 2}^{k_0}\frac{(-1)^{k+1}t^k}{k!}(ad(\bar h))^k(s_{(k_0-k)\alpha+j}(\bar g_1)).
        \end{align*}
            
            Let $\bar \gamma: [0,t]\ra \mathfrak n/\mathfrak w$ be the horizontal curve given by
             $\bar \gamma(t_1)=\bar g_1*(t_1\bar h)$.   We have $\bar \gamma'(t_1)=\bar h$ 
              for all $t_1$.   By the definition of  $s_{k\alpha+j}$, we have 
            \begin{align*}
           & s_{k_0\alpha+j}(\bar g_2)-s_{k_0\alpha+j}(\bar g_1)+t[\bar h, s_{(k_0-1)\alpha+j}(\bar g_1)]\\
          &=\int_0^t [s_{(k_0-1)\alpha+j}(\bar g_1*(t_1\bar h)), \bar h]dt_1-\int_0^t[s_{(k_0-1)\alpha+j}(\bar g_1), \bar h]dt_1\\
          &=\int_0^t [s_{(k_0-1)\alpha+j}(\bar g_1*(t_1\bar h))-s_{(k_0-1)\alpha+j}(\bar g_1), \bar h]dt_1\\
          &=\int_0^t[\int_0^{t_1}[s_{(k_0-2)\alpha+j}(\bar g_1*(t_2\bar h)),  \bar h]dt_2, \bar h]dt_1\\
          &=\int_0^t\int_0^{t_1}[[s_{(k_0-2)\alpha+j}(\bar g_1*(t_2\bar h)),  \bar h],  \bar h]dt_2dt_1\\
          &=(-1)^2\int_0^t\int_0^{t_1} (ad(\bar h))^2(s_{(k_0-2)\alpha+j}(\bar g_1*(t_2\bar h))dt_2dt_1.
          \end{align*}
            On the other hand, notice  
             \begin{align*}
             \frac{(-1)^{k+1}t^k}{k!}(ad(\bar h))^k(s_{(k_0-k)\alpha+j}(\bar g_1))
          =-(-1)^{k}\int_0^t\int_0^{t_1}\cdots\int_0^{t_{k-1}}(ad(\bar h))^k(s_{(k_0-k)\alpha+j}(\bar g_1))dt_{k}\cdots dt_1.
          \end{align*}
         By induction we get
         \begin{align*}   
           &K_{k_0\alpha+j}(\bar g_1, \bar g_2)\\
           &=(-1)^{k_0}\int_0^t\cdots\int_0^{t_{k_0-1}}(ad (\bar h))^{k_0}
           (s_j(\bar g_1*t_{k_0}\bar h)-s_j(\bar g_1)) dt_{k_0}\cdots dt_1.
           \end{align*}
            There is a constant  $C_1>0$ depending only on $\mathfrak n$ such that $|[X, Y]|\le C_1 |X|\cdot |Y|$ for any $X, Y\in \mathfrak n$. 
            Since $s_j$ is $\frac{j}{\alpha}$-Holder   for $1\le j\le \alpha$, we have
            \begin{align*}
            |K_{k_0\alpha+j}(\bar g_1, \bar g_2)|
            &\le \int_0^t\cdots\int_0^{t_{k_0-1}}|(ad (\bar h))^{k_0}
           (s_j(\bar g_1*t_{k_0}\bar h)-s_j(\bar g_1))| dt_{k_0}\cdots dt_1\\
           &\le \int_0^t\cdots\int_0^{t_{k_0-1}}C_1^{k_0}|\bar h|^{k_0}
           |(s_j(\bar g_1*t_{k_0}\bar h)-s_j(\bar g_1))| dt_{k_0}\cdots dt_1\\
           &\le C_1^{k_0}\int_0^t\cdots\int_0^{t_{k_0-1}} C 
           d_{CC}(\bar g_1*(t_{k_0}\bar h),  \bar g_1)^{\frac{j}{\alpha}}dt_{k_0}\cdots dt_1\\
           &=  C_1^{k_0}C\int_0^t\cdots\int_0^{t_{k_0-1}} t_{k_0}^{\frac{j}{\alpha}}  
           dt_{k_0}\cdots dt_1\\
           &=C_2 t^{\frac{k_0\alpha+j}{\alpha}}.
           \end{align*}
             Hence  (\ref{e5.12}) and in turn  (\ref{ine5.3}) holds 
         when    ${\bar g_1}^{-1}*\bar g_2\in \bar V_1$.  

            Now consider the general case.   Let  $g, g'\in \mathfrak n$.  As $\mathfrak n/\mathfrak w$ is Carnot,  
            by Lemma \ref{zigzag path},    there exist
                   $\bar g={\bar g}_0,  {\bar g}_1, \cdots, {\bar g}_k={\bar g}'$   such that 
                     $k\le n_0$,     
                   ${\bar g}_{i-1}^{-1}*{\bar g}_i\in \bar V_1$ and $\sum_i d_{CC}({\bar g}_{i-1},   {\bar g}_i)\le C d_{CC}(\bar g,{\bar g}')$, where   $C>0$  and $n_0$ 
                    depend only on $\mathfrak n/\mathfrak w$.    In particular we have $d_{CC}({\bar g}_{i-1},   {\bar g}_i)\le C d_{CC}(\bar g,{\bar g}')$.   
                Let $g_0=g$. We inductively define $g_i$ for $1\le i\le k$ such that 
                  $d(g_i, g_{i-1})=d(g_i*\mathfrak w, g_{i-1}*\mathfrak w)$.      
                       There is a constant $C_4>0$ such that 
                     $\frac{1}{C_4} d_{CC}(\bar x, \bar y)^{\frac{1}{\alpha}}\le d(x*\mathfrak w, y*\mathfrak w)\le C_4 d_{CC}(\bar x, \bar y)^{\frac{1}{\alpha}}$ for all $x,y\in \mathfrak n$.  We have  
                      \begin{align*}
                      d(g, g_k)&\le \sum_{i=1}^k d(g_{i-1}, g_i)=\sum_{i=1}^k d(g_{i-1}*\mathfrak w, g_i*\mathfrak w)\\
                      &\le \sum_{i=1}^k C_4 d_{CC}({\bar g}_{i-1}, {\bar g}_i)^{\frac{1}{\alpha}}\le C_4 n_0 C^{\frac{1}{\alpha}} d_{CC}({\bar g}, {\bar g}')^{\frac{1}{\alpha}}\\
                      &\le C_4^2n_0C^{\frac{1}{\alpha}} d(g*\mathfrak w, g'*\mathfrak w)\le 
                      C_4^2n_0C^{\frac{1}{\alpha}} d(g, g').
                      \end{align*}
                    By the triangle inequality we have $d(g_k, g')\le (1+C_4^2n_0C^{\frac{1}{\alpha}}) d(g, g')$.   
                 Now by the special case, we have 
                     $$d(F(g), F(g_k))\le \sum_i d(F(g_{i-1}), F(g_i))\le \sum_i C_3 d(g_{i-1}, g_i)\le  
                     C_3C_4^2n_0C^{\frac{1}{\alpha}} d(g, g').$$
                      On the other hand it is easy to see that the restriction of $F$ on the cosets 
                      of $W$  are isometries  and so $d(F(g_k), F(g'))=d(g_k, g')\le (1+C_4^2n_0C^{\frac{1}{\alpha}}) d(g, g')$.  Finally by the triangle inequality 
                       $d(F(g), F(g'))\le d(F(g), F(g_k))
+d(F(g_k), F(g'))\le      (1+(1+C_3)C_4^2n_0C^{\frac{1}{\alpha}})
     d(g, g')$.

            \end{proof}

          \section{Eliminating $s_j$ for $j<\alpha$}\label{eliminate}

   The goal of  this section  is to prove the following result.

  \begin{proposition}\label{elliminatedim(W)ge 2}
      Let $(N, D)$ be    Carnot-by-Carnot  satisfying $\dim(W)\ge 2$, $\dim(N/W)\ge 2$,    and $\Gamma$ 
      a    
    uniform quasisimilarity group  of $N$.
        Then 
       there exists a biLipschitz  map $F_0$ such that every element in the conjugate  
            $F_0\Gamma F_0^{-1}$    has a compatible  expression  $a*Bh*Aw *A s(\bar h)$ where the $W_j$ component $s_j$ of  $s$ vanishes for every $1\le j<\alpha$. 
            \end{proposition}

    We  shall use the approach in \cite{DyX16}  to prove  Proposition \ref{elliminatedim(W)ge 2}.   The cases $\dim(W)=1$ and  $\dim(N/W)=1 $   will be considered in Sections \ref{dim(w)=1}  and \ref{n/w} respectively.

             Let $N$   and $\Gamma$ be as in Proposition \ref{elliminatedim(W)ge 2}.   
               After replacing $\Gamma$ with a biLipschitz conjugate we may assume $\Gamma$ is a fiber similarity group (see  Section \ref{outline} 
               for the definition of  fiber similarity map and fiber similarity group).
                          To   be more precise, after applying   Theorem \ref{foliatedtheorem}   (and Lemma \ref{onfiber}) 
                           we may assume 
                           there are Carnot metrics  $d_{CC}$   and $\bar d_{CC}$   on  $W$ and $N/W$   respectively  such 
                           that  
                              every 
                    $\gamma\in \Gamma$ induces a similarity 
        $\bar\gamma$  of $(N/W,  \bar d_{CC})$,   and there is a graded automorphism $A_\gamma$ of  $W$ that is  also a similarity of 
         $(W, d_{CC})$ such that  $A_\gamma=\gamma_p|_W$ for any $p\in N$.

                              Let   $\text{Aut}_I(W, d_{CC})$ be the group of isometric graded automorphisms of $(W, d_{CC})$,  and 
      $\text{Aut}_c(W,d_{CC})$ be the group of graded automorphisms of $W$ that are compositions of a Carnot dilation and an isometric graded automorphism.  
        Then $\text{Aut}_I(W, d_{CC})$  is compact and 
        $\text{Aut}_c(W, d_{CC}))\cong   \text{Aut}_I(W, d_{CC})\times \mathbb R$  is amenable.   
   Similarly $\text{Aut}_c(\mathfrak n/\mathfrak w, \bar d_{CC})$ is amenable and so is the group of similarities  $\text{Sim}(\mathfrak n/\mathfrak w, \bar d_{CC})=(\mathfrak n/\mathfrak w)\rtimes \text{Aut}_c(\mathfrak n/\mathfrak w, \bar d_{CC})$
    of $(\mathfrak n/\mathfrak w, \bar d_{CC})$.   
             Denote  $G_0=\text{Aut}_c(W, d_{CC})\times
        \text{Sim}(\mathfrak n/\mathfrak w, \bar d_{CC})$
                and  define a map $\Psi: \Gamma\rightarrow  G_0$ by 
         $\Psi(\gamma)=(A_\gamma,  \overline{\gamma})$.   It is easy to see that $\Psi$ is a homomorphism. 
     Let $G$ be the closure of $\Psi(\Gamma)$ in $G_0$. 
           We notice that  $G$ is amenable, being a closed subgroup of the amenable group   $G_0$.

            Let $1\le j<\alpha$ and $E_j=\{s: (\mathfrak n/\mathfrak w, \bar d_{CC})\ra Z_j(\mathfrak w)|   s \;\text{is } \; \frac{j}{\alpha}-\text{Holder}, \; s(0)=0\}$.   We shall define an affine action of $G$ on $E_j$, then show that this affine action  has 
            a fixed point  in  $ \mathcal H_j\subset E_j$,   and finally   use   this fixed point to construct a biLipschitz   shear 
                map of $N$ to conjugate $\Gamma$ into a group  with 
               the  desired property.

            \subsection{The affine action on $E_j$}
              We first define a norm on $E_j$, then   a  linear  isometric action on $E_j$, and finally the translational part of the affine action.

           
           
           Since   the compact group $\text{Aut}_I(W, d_{CC})$ leaves $W_j$ invariant,  
               there exists a  $\text{Aut}_I(W, d_{CC})$-invariant 
                inner product $\langle,\rangle_j$ on $W_j$. 
                   Let $|\cdot|_j$ be the associated norm on $W_j$.
               Notice that   $E_j$ is a Banach space with respect to the norm   
             $$||s||=\sup_{p\not=q}\frac{|s(p)-s(q)|_j}{(\bar d_{CC}(p,q))^{\frac{j}{\alpha}}}.$$
            
            For a similarity  $f: X\ra X$  of a  metric space $X$, we denote by $\lambda_f$ the similarity constant of $f$: 
             $d(f(x_1), f(x_2))=\lambda_f d(x_1, x_2)$.   
             Let
    $G_1:=\{(A, B)\in  G_0|  \lambda_B=\lambda_A^\alpha\}$.    
           We first define an action of $G_1$ on $E_j$.  Let $c\in E_j$ and $(A, B)\in   G_1$.   We define $\pi_{(A,B)} c: 
                           \mathfrak n/\mathfrak w \ra Z_j(\mathfrak w)$ to be the function given by 
                            $$(\pi_{(A,B)} c)(\bar h)=A^{-1} c(B(\bar h))-A^{-1} c(B(0)).$$
               Clearly $(\pi_{(A,B)} c)(0)=0$ and $\pi_{(A,B)} c$ is linear in $c$.

             \begin{lemma}\label{homo}
                                $\pi_{(A,B)}$ defines an action of the opposite group $G_1^*$ of $G_1$ on $E_j$  by linear isometries.
                                \end{lemma}
                                
                                \begin{proof}    Write $A=\delta_{\lambda_A}\circ A'$, where 
                                $\delta_{\lambda_A}$ is a Carnot dilation and $A'\in \text{Aut}_I(W, d_{CC})$. 
                                For $w\in W_j$ we have $A(w)=\lambda_A^j A'(w)$ and so 
                                $|A(w_1)- A(w_2)|_j=\lambda_A^j|w_1-w_2|_j$ for $w_1, w_2\in W_j$.                
                Now let $\bar h_1, \bar h_2\in \mathfrak n/\mathfrak w$   and $c\in E_j$.    Then
                \begin{align*}
                \frac{|(\pi_{(A,B)} c)(\bar h_1)-(\pi_{(A,B)} c)(\bar h_2)|_j}{  \bar d_{CC}(\bar h_1, \bar h_2)^{\frac{j}{\alpha}}}&=\frac{|A^{-1} c(B(\bar h_1))-A^{-1} c(B (\bar h_2))|_j}
                { \bar d_{CC}(\bar h_1, \bar h_2)^{\frac{j}{\alpha}}}\\
                 &=\frac{\lambda_A^{-j}| c(B(\bar h_1))- c(B (\bar h_2))|_j}
                { \bar d_{CC}(\bar h_1, \bar h_2)^{\frac{j}{\alpha}}}\\
                &=\frac{| c(B (\bar h_1))-c(B  (\bar h_2))|_j}
                {(\lambda_B  \bar d_{CC}(\bar h_1, \bar h_2))^{\frac{j}{\alpha}}}\\
                &=\frac{|c(B(\bar h_1))-c(B (\bar h_2))|_j}{( \bar d_{CC}(B(\bar h_1),   B(\bar h_2)))^{\frac{j}{\alpha}}}.
                \end{align*}
                                showing  that  $\pi_{(A,B)} c\in E_j$  and  $||\pi_{(A,B)} c||=||c||$.  
                                We already observed that $\pi_{(A,B)} c$ is linear in $c$.
                                 Therefore
                                  the map $E_j\ra E_j$,  $c\mapsto \pi_{(A,B)} c$, is  a linear isometry. 
    
    It is easy to check that 
                               $\pi_{(A_2, B_2)(A_1, B_1)}=\pi_{(A_1, B_1)}\circ \pi_{(A_2, B_2)}$  holds    for any $(A_1,  B_1), (A_2, B_2) \in G_1$  and so $\pi_{(A,B)}$ defines an action of the opposite group $G_1^*$ of $G_1$ on $E_j$.    
    
    \end{proof}
                         


           
            To obtain a linear isometric  action of $G^*$ on $E_j$, we will show that $G\subset G_1$.  
           
           \begin{lemma}\label{dilation-relation}
        The formula $\lambda_{\bar \gamma}=\lambda^\alpha_{A_\gamma}$ holds for all $\gamma\in \Gamma$.    
           
           \end{lemma}
           
       \begin{proof} Since both $d_{CC}$ and $d|_W$ ($d$ is a   fixed $D$-homogeneous distance on $N$) are homogeneous distances on $W$, 
       there is a constant $L\ge 1$ such that  $d_{CC}(w_1, w_2)/L\le d(w_1, w_2)\le L d_{CC}(w_1, w_2)$ for all $w_1, w_2\in W$.     Similarly, as both $\bar d$ (recall $\bar d$ is the distance on $N/W$ induced by $d$, see   end of  Section \ref{homo distance})  and $ \bar d_{CC}^{\frac{1}{\alpha}}$ are   $\bar D$-homogeneous distances on $N/W$,  
       there is a constant $\bar L\ge 1$ such that  $ \bar d^{\frac{1}{\alpha}}_{CC}(x, y)/{\bar L}\le 
       \bar d(x, y)\le \bar L  \bar d^{\frac{1}{\alpha}}_{CC}(x, y)$  for any $x, y\in N/W$.  
        After possibly replace both $L$ and $\bar L$ with $\text{max}(L, \bar L)$ we 
          may  assume $\bar L=L$.  
        
         Let $\gamma\in \Gamma$.   
        Let $w_1\not= w_2\in  W$.    As $\gamma$ is a $(\Lambda, C_\gamma)$-quasi-similarity with respect to $d$  for some $C_\gamma>0$,  we have 
          \begin{align*}
       \lambda_{A_\gamma} d_{CC}(w_1, w_2)&=  d_{CC}(\gamma_p(w_1), \gamma_p(w_2))\le L d(\gamma_p(w_1), \gamma_p(w_2))\\
          &=L d(\gamma(pw_1), \gamma(pw_2))
          \le L \Lambda C_\gamma d(pw_1, pw_2)\\
          &\le L^2 \Lambda C_\gamma d_{CC}(w_1, w_2),
          \end{align*}
           and so $\lambda_{A_\gamma}  \le  L^2 \Lambda C_\gamma$.  
            Similarly   we get $\lambda_{A_\gamma}  \ge  \frac{C_\gamma}{L^2\Lambda}$     from 
           \begin{align*}
     \lambda_{A_\gamma} d_{CC}(w_1, w_2)&=    d_{CC}(\gamma_p(w_1), \gamma_p(w_2)) \ge \frac{1}{L}\cdot  d(\gamma_p(w_1), \gamma_p(w_2))\\
          &=\frac{1}{L}\cdot     d(\gamma(pw_1), \gamma(pw_2))
          \ge \frac{C_\gamma}{L\Lambda} d(pw_1, pw_2)\\
          &\ge  \frac{C_\gamma}{L^2\Lambda}  d_{CC}(w_1, w_2). 
          \end{align*}
       
       Pick $p, q\in N$ so that $pW\not=qW$ and such that $p, q$ realize the distance between the cosets $pW$ and $qW$.  Now
        \begin{align*} 
      (\lambda_{\bar \gamma} \bar d_{CC}(pW,  qW))^{\frac{1}{\alpha}} &= \bar d^{\frac{1}{\alpha}}_{CC}(\bar \gamma(pW), \bar\gamma(qW)) \le L  \bar d(\bar \gamma(pW), \bar \gamma(qW))\\
      &=L d(\gamma(p)W, \gamma(q)W) 
        \le L d(\gamma(p), \gamma(q))\\
      &  \le L\Lambda C_\gamma d(p,q)
      =L\Lambda C_\gamma \bar d(pW,  qW)\\
      &\le L^2\Lambda C_\gamma  \bar d^{\frac{1}{\alpha}}_{CC}(pW,  qW),
        \end{align*}
        yielding   $\lambda_{\bar \gamma}^{\frac{1}{\alpha}}\le  L^2\Lambda  C_\gamma$. 
         Similarly by picking $p, q\in N$ so that $pW\not=qW$ and $d(\gamma(p), \gamma(q))=d(\gamma(p)W, \gamma(q)W)$  we get 
           $\lambda_{\bar \gamma}^{\frac{1}{\alpha}}\ge  \frac{C_\gamma}{L^2\Lambda }$. 
         Combining the above four inequalities we get 
          $\frac{1}{L^4\Lambda^2} \le     \frac{ \lambda_{\bar \gamma}^{\frac{1}{\alpha}}  }{\lambda_{A_\gamma} }    \le L^4\Lambda ^2$  for all $\gamma\in \Gamma$.
       Notice that for any integer $n\ge 1$, we have 
        $\lambda_{A_{\gamma^n}} =\lambda^n_{A_\gamma} $ and $\lambda_{\bar \gamma^n}=\lambda^n_{\bar \gamma}$.  
           The above inequality applied to $\gamma^n$  yields   $$\frac{1}{L^4\Lambda^2} \le     \left(\frac{ \lambda_{\bar \gamma}^{\frac{1}{\alpha}}  }{\lambda_{A_\gamma} }\right)^n    \le L^4\Lambda ^2$$   for all $n\ge 1$  and   so we must have 
           $\lambda_{\bar \gamma}=\lambda^\alpha_{A_\gamma}$.

       \end{proof}


             Lemma \ref{dilation-relation}   implies  that $\Psi(\Gamma)\subset G_1$.  
         Since $G_1$ is a closed subgroup of
        $G_0$, we have $G\subset G_1$.   
            By  restricting to $G^*$   the linear isometric action of $G^*_1$  on $E_j$   we get  a linear isometric action of $G^*$ on $E_j$.  
            To get an affine action on $E_j$ we next define the translational part.  

        Define  a map $b_j:  \Gamma\ra E_j$     by 
                     $b_j(\gamma)=s_{\gamma,j}$, where $s_{\gamma, j}=\pi_j\circ s_\gamma$  
                      and $s_\gamma$ is  as in a compatible expression 
                      $\gamma(h*w)=\gamma(0)*B_\gamma h*A_\gamma w*A_\gamma s_\gamma(\bar h)$
                     of $\gamma$.   Recall that $\pi_j\circ s_\gamma$  is unique for $j<\alpha$ even if $\gamma$ may have more than one compatible expression. 
                      In Lemma \ref{transpart}   we shall prove that $s_{\gamma, j}\in \mathcal H_j$;    in particular,  $s_{\gamma, j}\in E_j$.

                     \begin{Le}\label{b_jiscocycle}
                The equality          $b_j(\gamma_2\gamma_1)=b_j(\gamma_1)+\pi_{\Psi(\gamma_1)}b_j(\gamma_2)$   holds  for any $\gamma_1, \gamma_2\in \Gamma$. 
                     \end{Le}     
                          
                          \begin{proof} The proof may be tedious, but the idea is simple: 
                          it  follows from two different ways of computing $\gamma_2\gamma_1(h)$. We first notice that 
                           if $X, Y\in \mathfrak n$ and $X\in \oplus_{\lambda_j\ge \alpha}V_{\lambda_j}$, then  $\pi_j(X*Y)=\pi_j(Y)$ for $1\le j<\alpha$. We shall repeatedly use this fact implicitly.  Also recall  that $s_{\gamma, j}(\bar h)=A^{-1}_\gamma\pi_j (\gamma(0)^{-1}* \gamma(h))$.

                          Let $\gamma_1, \gamma_2\in \Gamma$  and $h\in H$. Write ${\gamma_1}(0)=h_1*w_1$   and 
                           ${\gamma_1}(0)*B_{\gamma_1} h=h_2*w_2$  with $h_1, h_2\in H$ and $w_1, w_2\in \mathfrak w$. 
                             Then   
                             $$w^{-1}_1*w_2=[w^{-1}_1*(h^{-1}_2*h_1)*w_1]* B_{\gamma_1} h$$ and it follows that 
                              $w^{-1}_1*w_2\in \oplus_{j\ge \alpha}W_j$.  
                                We have  
                                $(\gamma_2\circ\gamma_1)(0)={\gamma_2}(0)*B_{\gamma_2}h_1*A_{\gamma_2}w_1*A_{\gamma_2}  s_{\gamma_2}(\bar h_1)$  and 
                                \begin{align*}
                               \gamma_2(\gamma_1(h))&
                               =\gamma_2({\gamma_1}(0)*B_{\gamma_1} h*A_{\gamma_1}  s_{\gamma_1}(\bar h))\\
                               &= \gamma_2(h_2*w_2*A_{\gamma_1}  s_{\gamma_1}(\bar h))\\
                               &= {\gamma_2}(0)*B_{\gamma_2} h_2*A_{\gamma_2}w_2 *A_{\gamma_2}A_{\gamma_1}s_{\gamma_1}(\bar h)*A_{\gamma_2} s_{\gamma_2}(\bar h_2).  
                                    \end{align*}
                  Now we have
                    \begin{align*}
                   & s_{\gamma_2\gamma_1,j}(\bar h)\\
                   &=A^{-1}_{\gamma_2\gamma_1} \pi_j({\gamma_2\gamma_1}(0)^{-1} *(\gamma_2\gamma_1)(h))\\
                    &=A^{-1}_{\gamma_2\gamma_1} \pi_j\{A_{\gamma_2}(s_{\gamma_2}(\bar h_1))^{-1}*A_{\gamma_2}w_1^{-1}*
                    (B_{\gamma_2}h_1)^{-1}* B_{\gamma_2}h_2*
                    A_{\gamma_2}w_2 *A_{\gamma_2}A_{\gamma_1}s_{\gamma_1}(\bar h)*A_{\gamma_2} s_{\gamma_2}(\bar h_2)\}\\
                    &=A^{-1}_{\gamma_2\gamma_1} \pi_j\{A_{\gamma_2}(s_{\gamma_2}(\bar h_1))^{-1}*A_{\gamma_2}w_1^{-1}*
                    A_{\gamma_2}w_2 *A_{\gamma_2}A_{\gamma_1}s_{\gamma_1}(\bar h)*A_{\gamma_2} s_{\gamma_2}(\bar h_2)\}\\
                 &=A^{-1}_{\gamma_2\gamma_1} \pi_j\{A_{\gamma_2}(s_{\gamma_2}(\bar h_1))^{-1}*A_{\gamma_2}(w_1^{-1}*w_2)
                     *A_{\gamma_2}A_{\gamma_1}s_{\gamma_1}(\bar h)*A_{\gamma_2} s_{\gamma_2}(\bar h_2)\}\\
                   &=A^{-1}_{\gamma_2\gamma_1} \pi_j\{A_{\gamma_2}(s_{\gamma_2}(\bar h_1))^{-1}
                     *A_{\gamma_2}A_{\gamma_1}s_{\gamma_1}(\bar h)*A_{\gamma_2} s_{\gamma_2}(\bar h_2)\}\\
                    &=\pi_j\{A^{-1}_{\gamma_1}(s_{\gamma_2}(\bar h_1))^{-1}* s_{\gamma_1}(\bar h)   *A^{-1}_{\gamma_1}s_{\gamma_2}(\bar h_2)\}.
                    \end{align*}        
                           Since $s_\gamma$ takes values in $Z(\mathfrak w)$, by BCH formula
                            and noting $\bar h_2=\bar\gamma_1(\bar h)$, 
                            $\bar h_1=\bar\gamma_1(0)$  we obtain 
                            $$ s_{\gamma_2\gamma_1,j}(\bar h)=s_{\gamma_1,j}(\bar h) + A^{-1}_{\gamma_1} s_{\gamma_2,j}(\bar h_2)- A^{-1}_{\gamma_1}s_{\gamma_2,j}(\bar h_1)=
                            s_{\gamma_1,j}(\bar h) +(\pi_{\Psi(\gamma_1)} s_{\gamma_2, j})(\bar h).
                            $$
                          
                          \end{proof}


        Recall we have two maps $\Psi: \Gamma\ra G$ and $b_j: \Gamma \ra E_j$.  
       \begin{lemma}\label{uniquelimit}
       Let   $\{ g_i\}$ and $\{\tilde g_i\}$ 
       be two sequences in $\Gamma$   and $(A,B)\in  G$   such  that 
       $\Psi( g_i)  \ra (A, B)$   and   $\Psi(\tilde g_i) \ra (A, B)$.    If  $s, \tilde s\in E_j$ are such that   
         $b_j(g_i)\ra  s$     and 
           $b_j(\tilde g_i)\ra \tilde s$     pointwise  
            as $i\ra \infty$, then $s=\tilde s$.

       \end{lemma}

     \begin{proof}  
        By setting $\gamma_1=\gamma^{-1}$ and $\gamma_2=\gamma$ in Lemma \ref{b_jiscocycle},   
         we get 
      $b_j(\gamma^{-1})=-\pi_{\Psi(\gamma^{-1})} b_j(\gamma)$.   Similarly if we set 
      $\gamma_1=g_i$ and $\gamma_2=\tilde g_i^{-1}$ then we get 
       $$b_j(\tilde g_i^{-1} g_i)=b_j(g_i)+\pi_{\Psi(g_i)} b_j(\tilde g_i^{-1})=b_j(g_i)+\pi_{\Psi(g_i)} 
       (-\pi_{\Psi(\tilde g_i^{-1})} b_j(\tilde g_i))=b_j(g_i)- \pi_{\Psi(\tilde g_i^{-1}g_i)} b_j(\tilde g_i). $$ 
            The assumption implies $\Psi(\tilde g_i^{-1} g_i)\ra (\text{Id}, \text{Id})$.  
            On the other hand,   by  Corollary  \ref{bounded s}   there is a constant $C>0$ such that $||b_j(\gamma)||\le C$   for all $\gamma\in \Gamma$.
             It follows that 
             $b_j(\tilde g_i^{-1} g_i)  \ra s-\tilde s$   pointwise 
         as $i\ra \infty$.    
            Now it suffices to show that if $\{\gamma_i\}$ is a sequence in $\Gamma$ such that $\Psi(\gamma_i)\ra (\text{Id}, \text{Id})$  and  $b_j(\gamma_i)\ra s$  pointwise for some 
           $s\in E_j$, then $s\equiv 0$.

            We suppose $s\not\equiv 0$ and shall get a contradiction.  
              There is some $\bar h\in \mathfrak n/{\mathfrak w}$ such that $ s(\bar h)\not=0$.
              Fix some $m\ge 1$ such that 
                $m\cdot \frac{|s(\bar h)|_j}{2}>C\cdot \bar d_{CC}(\bar h,0)^{\frac{j}{\alpha}}.$
             Note $\Psi(\gamma^k_i)\ra (\text{Id}, \text{Id})$   for $1\le k\le m$.  On the other hand,   an easy  induction using Lemma \ref{b_jiscocycle}
               implies 
             $$b_j(\gamma^m_i)=b_j(\gamma_i)+\pi_{\Psi(\gamma_i)} b_j(\gamma_i)+\cdots +\pi_{\Psi(\gamma^{m-1}_i)} b_j(\gamma_i).$$
               Since $\Psi(\gamma^k_i)\ra (\text{Id}, \text{Id})$   and  $b_j(\gamma_i)\ra s$  pointwise,  we have 
                $|\pi_{\Psi(\gamma^{k}_i)} b_j(\gamma_i)(\bar h)-s(\bar h)|_j<\frac{|s(\bar h)|_j}{2}$  for all $0\le k\le m-1$ and all sufficiently large $i$.
              It follows from the triangle inequality that 
              $|b_j(\gamma^m_i)(\bar h) -m s(\bar h)|_j<m\cdot \frac{|s(\bar h)|_j}{2}$ and hence 
                   $|b_j(\gamma^m_i)(\bar h)|_j> m\cdot \frac{|s(\bar h)|_j}{2}>C\cdot \bar d_{CC}(\bar h,0)^{\frac{j}{\alpha}}$, contradicting the fact that  
                   $||b_j(\gamma)||\le C$ for all $\gamma\in \Gamma$.

     \end{proof}

     Lemma \ref{uniquelimit}  
      in particular implies   that  $b_j(\gamma)=b_j(\tilde \gamma)$   holds  whenever 
     $\Psi(\gamma)=\Psi(\tilde \gamma)$ (by taking $g_i=\gamma$ and $\tilde g_i=\tilde \gamma$).   
     Since $b_j(\gamma)$  lies in the closed ball $\bar B(0, C)\subset E_j$   for all $\gamma\in \Gamma$ and $\bar B(0, C)$ is compact in the topology of pointwise convergence, Lemma \ref{uniquelimit}      implies   that 
      for   any $(A, B)\in G$  and  any sequence $\{\gamma_i\}$ satisfying 
       $\Psi(\gamma_i)\ra (A,B)$,  the sequence $b_j(\gamma_i)$ converges to some $s\in \bar B(0, C)$ pointwise and $s$ is independent of the choice of the sequence $\{\gamma_i\}$.    We denote this $s$ by $\tilde b_j(A,B)$. It follows that the map $\tilde b_j: G\ra E_j$ is well-defined  and continuous, where $E_j$ is equipped with the topology of pointwise convergence.   
       This map $\tilde b_j$ is the translational part of the affine action.

        To see  that the map $\tilde b_j$ is actually the translational part of an affine action, we need to show 
               that $\tilde b_j$ is a $1$-cocycle  associated to the $G^*$-module $E_j$; that is, 
                $$\tilde b_j((A_2, B_2)(A_1, B_1))=\tilde b_j(A_1, B_1)+\pi_{(A_1, B_1)}\tilde b_j(A_2,B_2)$$  
                 holds  for all 
                $(A_1, B_1), (A_2, B_2)\in G$.   This follows  from   Lemma \ref{b_jiscocycle}  and the definition of $\tilde b_j$.  
                 The associated affine action of $G^*$ on $E_j$ is given by
          $$(A,B)\cdot c=\pi_{(A,B)}c +\tilde b_j(A,B),$$   where $c\in E_j$ and $(A,B)\in G$.  
  Since $\tilde b_j$ is continuous, it is now easy to see that the affine action $G^*\times E_j\ra E_j$ is separately continuous, where  $E_j$ is equipped with the topology of pointwise convergence.

       \subsection{Existence of fixed point in $\mathcal H_j$}

       Fix some  $c\in \mathcal H_j\subset E_j$, let $G^*\cdot c$ be the orbit of $c$ under the affine action of $G^*$. Let $K$ be the closure of  the convex hull of 
       $G^*\cdot c$  in $E_j$ with respect to the topology  of pointwise convergence.     In order to use Day's fixed point theorem to obtain a fixed point in  $\mathcal H_j$, we need to show that  $K$ is compact and lies in  $\mathcal H_j$.   
       The following lemma will be useful for this purpose. 
       
       \begin{lemma}\label{K lies in H_j}    
                       Let $Y\subset \mathcal H_j   $ be  bounded   with respect to the norm 
                       $||\cdot||$.     Then $\bar Y\subset \mathcal H_j$, where $\bar Y$ is the closure of $Y$ in $E_j$  in the topology  of pointwise convergence.

               \end{lemma}
               
               \begin{proof}
                Since $Y$ is bounded in $(E_j, ||\cdot||)$,  there is a constant $C_0>0$ such that 
                $||s||\le C_0$ for any $s\in Y$.  It follows that 
                  for any $i\ge 1$ and  any compact subset $M\subset  \mathfrak n/\mathfrak w$, there is a constant $C(C_0, M, i)>0$ such that $|s^{(i)}(x)|\le C(C_0, M,i)$  for all $x\in M$  and all $s\in Y$.    
                Let $s_k\in Y$, $k=1, \cdots$  and $s\in E_j$ 
               be such that $s_k(x)\to s(x)$ for any $x\in \mathfrak n/\mathfrak w$.  The dominated convergence theorem then implies  $\int_\gamma [s_k, \theta_H(x)]\to \int_\gamma [s, \theta_H(x)]$ for any horizontal curve $\gamma$ in $\mathfrak n/\mathfrak w$.  This implies $s^{(1)}$ is defined  and 
               $s_k^{(1)}\to s^{(1)}$  pointwise.  Now an induction shows $s^{(i)}$   is defined for all $i$ and so $s\in  \mathcal H_j   $.

               \end{proof}

           If  $\alpha$ is not an integer, then  $\mathcal H_j=E_j$ so $K\subset \mathcal H_j$ holds automatically. 
       To     show $K\subset \mathcal H_j$, we may assume $\alpha\ge 2$ is an integer.    
                            We first prove  $G^*\cdot c\subset \mathcal H_j$.
                          By the definition of the affine action we need to show that   for any $1\le j\le \alpha-1$: \newline
                         (1)  $\pi_{(A,B)} c\in \mathcal H_j$ for any $(A,B)\in G$  and any $c\in \mathcal H_j$;  see Lemma \ref{linearpart}.\newline
                         (2) $\tilde b_j(A,B)\in \mathcal H_j$    for any $(A,B)\in G$; see the paragraph before Lemma \ref{transpart}.  \newline

                          \begin{Le}\label{linearpart}
                          $\pi_{(A,B)} c\in \mathcal H_j$ for any integer $1\le j< \alpha$,  $(A,B)\in G$  and any $c\in \mathcal H_j$.
                          \end{Le}
                          
                     \begin{proof}   Let  $(A,B)\in G$  and  $c\in \mathcal H_j$.   Then there is a sequence $\{\gamma_i\}$ in $\Gamma$ such that $\Psi(\gamma_i)\ra (A,   B)$.   Notice that $\pi_{\Psi(\gamma_i)}c\ra \pi_{(A,B)}c$ pointwise.      Since 
                     $||\pi_{\Psi(\gamma_i)}c||=||c||$,   by Lemma \ref{K lies in H_j}  
                        to show $\pi_{(A,B)} c\in \mathcal H_j$   it suffices to show 
                     $\pi_{\Psi(\gamma)}c \in \mathcal H_j$  for all $\gamma\in \Gamma$.    
                     For this we shall show that there is a biLipschitz  shear map 
                      $f_2(g)=g*\tilde s(\bar g)$ with $\tilde s_j=\pi_{\Psi(\gamma)}c$. The lemma then follows from     Proposition \ref{shear-bilip}.

                      Pick   
                     $c\in \mathcal H_j$  and $\gamma\in \Gamma$.    Let $s: \mathfrak n/\mathfrak w\ra Z(\mathfrak w)$ be   given by $s=\sum_i s_i$, where 
                        $s_j=c$,  $s_i=0$  for 
                          $1\le i\le  \alpha$, $i\not= j$   and $s_{k\alpha+i}=s_i^{(k)}$ for 
                            $k\ge 1$ and $1\le i\le \alpha$.   By Proposition \ref{construction},  
                                the shear map $f(g)=g*s(\bar g)$ is biLipschitz. 
                           We shall show $f_2:=L_{f_1(0)^{-1}}\circ f_1$ with $f_1:= \gamma^{-1} f \gamma$ is the desired biLipschitz shear map. 
                           
                           Notice that for any two maps $F, G: N\ra N$  and any $p\in N$, we have 
                            $(F\circ G)_p=F_{G(p)}\circ G_p$. 
                            It is clear that $f_1$ is a biLipschitz map of $N$ that   permutes the cosets of $W$.  
                            Since $f_p|_W=\text{Id}$ and 
                            $\gamma_p|_W=A_\gamma$, we see that $(f_1)_p|_W=\text{Id}$.  On the other hand, as $f$ induces the identity map on $N/W$,  so does $f_1$.  Hence the assumption of Lemma \ref{sj} are satisfied and $f_1$ has  a   compatible expression
                             $f_1(h*w)=f_1(0)*h*w*\tilde s(\bar h)$ for some map 
                             $\tilde s:  {\mathfrak n}/{\mathfrak w}\ra Z(\mathfrak w)$.   It follows that  $f_2(h*w)=h*w*\tilde s(\bar h)$ is a biLipschitz shear map.  It remains to show 
                              $\tilde s_j=\pi_{\Psi(\gamma)}c$.  Note 
                               $\tilde s_j=\pi_j(f_1(0)^{-1}*f_1(h))$.

                            Fix a compatible expression  $\gamma(h*w)=\gamma(0)* B_\gamma h * A_\gamma w* A_\gamma s_\gamma(\bar h)$ of $\gamma$.   We calculate 
                             $f_1(h)$:
                              \begin{align*}
            f_1(h)=         \gamma^{-1} f \gamma(h)&=\gamma^{-1} f (\gamma(0) * B_\gamma h * A_\gamma s_\gamma(\bar h))\\
                     &=\gamma^{-1}(\gamma(0) *B_\gamma h * A_\gamma      s_\gamma(\bar h) *s(\overline{\gamma(0)} *\bar{B}_\gamma \bar h) )\\
                     &=h*A^{-1}_\gamma s(\overline{\gamma(0)} *\bar{B}_\gamma \bar h),
                     \end{align*}
                         where the last equality follows from the fact that $\gamma$ is a bijection  and 
                        $$\gamma (h*\{A^{-1}_\gamma s(\overline{\gamma(0)} *\bar{B}_\gamma \bar h)\})=
                        \gamma(0)* B_\gamma h*s(\overline{\gamma(0)} *\bar{B}_\gamma \bar h)* A_\gamma s_\gamma(\bar h).$$
                           In particular,    $f_1(0)=A_\gamma^{-1}s(\overline{\gamma(0)})$.     Now 
                           $   f_2(h)=A^{-1}_\gamma  s(\overline{\gamma(0)})^{-1}*h*A^{-1}_\gamma s(\overline{\gamma(0)} *\bar{B}_\gamma \bar h)$ and from this we see
    $$s_{f_2, j}(\bar h)=A^{-1}_\gamma s_j(\overline{\gamma(0)} *\bar{B}_\gamma \bar h)- A^{-1}_\gamma  
   s_j(\overline{\gamma(0)})=
   A^{-1}_\gamma c(\overline{\gamma(0)} *\bar{B}_\gamma \bar h)- A^{-1}_\gamma  
   c(\overline{\gamma(0)})=(\pi_{\Psi(\gamma) }c)(\bar h).$$

                           \end{proof}

                        
                  Let     $(A,B)\in G$. By the definition of $\tilde b_j$,   there is a sequence $\{\gamma_i\}$ in $\Gamma$ such that
                   $b_j(\gamma_i)\ra \tilde b_j(A,B)$ pointwise.   Since $b_j(\gamma_i) $ 
                       lies in the  closed  ball $\bar B(0,C)$ (see Corollary \ref{bounded s}), Lemma \ref{K lies in H_j}      implies  $\tilde b_j(A,B)\in {\mathcal H}_j$ provided  $b_j(\gamma)\in \mathcal H_j$    for all $\gamma\in \Gamma$.      This is true by (2) of the following lemma.  
                        
                       \begin{Le}\label{transpart}
                           Let $F: \mathfrak n \ra \mathfrak n$  be  a fiber similarity  map
                            and $F(h*w)=F(0)* B h *Aw*A s(\bar h)$   a compatible expression of $F$.   Then \newline
                           (1)  for $1\le j<  \alpha$,   $s_{j}\in  E_j $   with  $||s_j||$  bounded above  by a constant depending only on   $H$ and the  biLipschitz constant of $F$.    
                             \newline
                 (2)       if $\alpha$ is an integer, then     $s_{j}\in \mathcal H_j$
                         for any $1\le j< \alpha$. \newline  
                          (3)  if $\alpha$ is an integer, then  $s_{\alpha}\in  E_\alpha $   with  $||s_\alpha||$  bounded above  by a constant depending only on   $H$,   the  biLipschitz constant of $F$  and the map $B$.
                       \end{Le}
                       
                       \begin{remark}
  The    map  $F$ may admit many different compatible expressions: both $B$ and $s_\alpha$ may change and as a result their 
       Lipschitz constants may be arbitrarily large while  $F$ is fixed.

       \end{remark}

                       \begin{proof}
                           We will use the fact that there is a constant $L_1\ge 1$  depending only on $H$ such that 
                        $(1/{L_1}) \cdot d^{\frac{1}{\alpha}}_{CC}(0, \bar h)\le d(0, h)\le L_1 \cdot d^{\frac{1}{\alpha}}_{CC}(0, \bar h)$
                         for any $h\in H$, where $d_{CC}$ is a  Carnot metric on $\mathfrak n/\mathfrak w$.   
                         Since $F$ is $L_2$-biLipschitz for some $L_2\ge 1$,  for any $h_0, h\in H$ we have
                          $$d(0, F(h_0)^{-1}* F(h_0*h))
                           =d(F(h_0),     F(h_0*h))
                           \le    L_2\cdot d(h_0, h_0*h)=L_2\cdot d(0, h)\le 
                          L_2 L_1 d^{\frac{1}{\alpha}}_{CC}(0, \bar h).$$

                          There is a constant $L>0$ depending on $B$ such that 
                          for any $h\in H$ we have 
                          $$d(0, Bh)\le  L \cdot d^{\frac{1}{\alpha}}_{CC}(0, \bar B \bar h)
                          =L \lambda_{\bar B}^{\frac{1}{\alpha}}  \cdot d^{\frac{1}{\alpha}}_{CC}(0, \bar h), $$ 
                             where $\lambda_{\bar B}$ denotes the similarity constant of $\bar B$.    We also have 
                          \begin{align*}
                           &d(0, (Bh)^{-1}*F(h_0)^{-1}* F(h_0*h))
                           =d(Bh,  F(h_0)^{-1}* F(h_0*h))\\
                           &\le d(Bh, 0)+d(0, F(h_0)^{-1}* F(h_0*h))
                     \le      (L  \lambda_{\bar B}^{\frac{1}{\alpha}}+L_2L_1) d^{\frac{1}{\alpha}}_{CC}(0, \bar h).
                          \end{align*}

                              Let $F_0: \mathfrak n\ra \mathfrak n$ be the shear map  given by $F_0(g)=g*s(\bar g)$.

                          \noindent
                          {\bf{Claim:}}   (a)  $\pi_j  (F(h_0)^{-1}* F(h_0*h))=
                          As_j(\overline{h_0}*\bar h)-A s_j(\overline{h_0})$  holds for $1\le j<  \alpha$;\newline 
                    (b)            $\pi_{i} ((Bh)^{-1}*F(h_0)^{-1}* F(h_0*h))
                             =A \pi_{i}(h^{-1}*F_0(h_0)^{-1}* F_0(h_0*h))$
                              for any integer $i$ that is not an integral multiple of $\alpha$,  where $\pi_i: \mathfrak n\ra V_{i}= W_{i}$  is the projection with respect to the decomposition  $\mathfrak n=\oplus  V_{\lambda_k}$.     \newline
                              (c)   $\pi_{\alpha} ((Bh)^{-1}*F(h_0)^{-1}* F(h_0*h))=
                               As_\alpha(\overline{h_0}*\bar h)-A s_\alpha(\overline{h_0})$.

                 We first assume the Claim and finish the proof of the lemma.              
                              Part (a) of the claim implies  for $1\le j<\alpha$,  
                              \begin{align*}
                              |s_j(\overline{h_0}*\bar h)- s_j(\overline{h_0})|^{\frac{1}{j}}  & \le L_2\cdot |\pi_j(F(h_0)^{-1}* F(h_0*h))|^{\frac{1}{j}}\\
                             & \le  L_2\cdot  L_2L_1 d^{\frac{1}{\alpha}}_{CC}(0, \bar h)\\
                     & =  L^2_2 L_1  d^{\frac{1}{\alpha}}_{CC}(\overline{h_0}, \overline{h_0}*\bar h),
                     \end{align*}     
                               hence    (1) holds.  
                              Now notice $h^{-1}*F_0(h_0)^{-1}* F_0(h_0*h)=K(\overline{h_0}, \overline{h_0*h})$, where  
                              $K(\bar g_1,   \bar g_2)$ is defined for $g_1, g_2\in \mathfrak n$  
                                before Lemma \ref{shear-bilip-lemma}. 
                         So  when $\alpha$ is an integer,  Part (b) of the claim implies 
                          \begin{equation}\label{kalpha}
                          | \pi_{k\alpha+j}(K(\overline{h_0}, \overline{h_0*h}))|^{\frac{1}{k\alpha+j}}
                          \le L_2( L   \lambda_{\bar B}^{\frac{1}{\alpha}}+L_2L_1)  \cdot d^{\frac{1}{\alpha}}_{CC}(0, \bar h)
                          \end{equation}
                            for all $k\ge 0$ and all $1\le j<\alpha$.  
                           Then   (2)  follows    
                     since     by   (\ref{kalpha})   the assumption of Lemma \ref{shear-bilip-lemma}  is satisfied.     Finally (3) follows from Part (c):
 $$|s_\alpha(\overline{h_0}*\bar h)- s_\alpha(\overline{h_0})|^{\frac{1}{\alpha}}\le 
 L_2 |\pi_{\alpha} ((Bh)^{-1}*F(h_0)^{-1}* F(h_0*h))|^{\frac{1}{\alpha}}\le L_2
                     (L  \lambda_{\bar B}^{\frac{1}{\alpha}}+L_2L_1) d^{\frac{1}{\alpha}}_{CC}(0, \bar h).$$   
                       
                       Next we prove the claim.   Write $h_0*h=h_1*w_1$ for some $h_1\in H$, $w_1\in \oplus_iW_{i\alpha}$.   Applying $\pi_\alpha$ to both sides we get
                       $\pi_\alpha(h_0)+\pi_\alpha(h)=\pi_\alpha(h_1)+\pi_\alpha(w_1)$. As 
                        $V_\alpha=W_\alpha\oplus H_1$ is a direct sum, we obtain
                           $\pi_\alpha(h_0)+\pi_\alpha(h)=\pi_\alpha(h_1)$ and $\pi_\alpha(w_1)=0$;  hence 
                            $\pi_\alpha(Bh_0)+\pi_\alpha(Bh)=\pi_\alpha(Bh_1)$  and 
                           $w_1\in \oplus_{i\ge 2}W_{i\alpha}$.  
                        Note
                        $\bar h_1=\overline{h_0}*\bar h$. 
                       We have 
                        $F(h_0)=F(0)*Bh_0*A s(\overline{h_0})$  and  $F(h_0*h)=F(h_1*w_1)=F(0)* B h_1* A w_1*A s(\overline{h_0}* \bar h)$. Using $Bh* Aw*(Bh)^{-1}=A(h*w*h^{-1})$ twice we get:
                        \begin{align*}
                       &(Bh)^{-1}* (F(h_0))^{-1}*F(h_0*h)\\
                        &= (Bh)^{-1}*A(-s(\overline{h_0}))* (B h_0)^{-1}*Bh_1*A w_1*A s(\overline{h_0}* \bar h)\\
                        & =(Bh)^{-1}*(Bh_0)^{-1}*\{Bh_0*A(-s(\overline{h_0}))* (B h_0)^{-1}\}*Bh_1*A w_1*A s(\overline{h_0}* \bar h)\\
                        &=(Bh)^{-1}*(Bh_0)^{-1}*A(h_0*(-s(\overline{h_0}))*h_0^{-1})*Bh_1*A w_1*A s(\overline{h_0}* \bar h)\\
                        &=(Bh)^{-1}*(Bh_0)^{-1}*Bh_1*\{(Bh_1)^{-1}*A(h_0*(-s(\overline{h_0}))*h_0^{-1})*Bh_1\}*A w_1*A s(\overline{h_0}* \bar h)\\
                        &=(Bh)^{-1}*(Bh_0)^{-1}*Bh_1*A(h_1^{-1}*h_0*(-s(\overline{h_0}))*h_0^{-1}*h_1)*Aw_1*A s(\overline{h_0}* \bar h)\\
                        &=(Bh)^{-1}*(Bh_0)^{-1}*Bh_1*A(w_1*h^{-1}*(-s(\overline{h_0}))*h*w^{-1}_1)*Aw_1*A s(\overline{h_0}* \bar h)\\
                        &=(Bh)^{-1}*(Bh_0)^{-1}*Bh_1*Aw_1*A(h^{-1}*(-s(\overline{h_0}))*h)*A s(\overline{h_0}* \bar h),\\
                        \end{align*}
                        where for  the  sixth equality we used $h_1*w_1=h_0*h$.   
                         We first prove Part (c): as $\pi_\alpha(Bh_0)+\pi_\alpha(Bh)=\pi_\alpha(Bh_1)$  and 
                           $w_1\in \oplus_{i\ge 2}W_{i\alpha}$, the above calculation yields
                            \begin{align*}
                            \pi_{\alpha} ((Bh)^{-1}*F(h_0)^{-1}* F(h_0*h))
                            &=\pi_\alpha(A(h^{-1}*(-s(\overline{h_0}))*h)*A s(\overline{h_0}* \bar h))\\ &=
                               As_\alpha(\overline{h_0}*\bar h)-A s_\alpha(\overline{h_0}).
                               \end{align*}

                         The above calculation also gives
                        $(Bh)^{-1}*(F(h_0))^{-1}*F(h_0*h)=P*Az$, where  $P=(Bh)^{-1}*(Bh_0)^{-1}*Bh_1*Aw_1$ and
                         $z=s(\overline{h_0}* \bar h)+h^{-1}*(-s(\overline{h_0}))*h$.
                              Since $z$ lies in $Z(\mathfrak w)$,  the BCH formula gives 
                               $(Bh)^{-1}*(F(h_0))^{-1}*F(h_0*h)= P+Az+Q$, where $Q$ is a sum of iterated brackets of $P$ and 
                                $Az$,   and $Az$ appears exactly once in each of these  iterated brackets.    Notice that the BCH formula also implies  $P$ is a sum of iterated brackets of the terms  $Bh$, $Bh_0$, $Bh_1$, $Aw_1$. Now $[Bh, Aw]=A[h, w]$ and the Jacobi identity imply that $Q=A\tilde Q$, where $\tilde Q$ is obtained from  the sum of iterated brackets that gives rise to  $Q$ by replacing  $Bh$,    $Bh_0$, $Bh_1$, $Aw_1$, $Az$ by  $h$,  $h_0$, $h_1$, $w_1$, $z$ respectively.    So  $(Bh)^{-1}*(F(h_0))^{-1}*F(h_0*h)= P+Az+A\tilde Q$.
                                  
                                  Now comparing the formulas $F(h*w)=F(0)* Bh*Aw* As(\bar h)$  and $F_0(h*w)=h*w*s(\bar h)$,
                               and     repeating  the above calculation for $h^{-1}* (F_0(h_0))^{-1}* F_0(h_0*h)$ (we only need to replace $A$ with $\text{Id}_W$ and $B$ with $\text{Id}_H$), we get
                                $$h^{-1}* (F_0(h_0))^{-1}* F_0(h_0*h)=\tilde P +z+\tilde Q,$$
                                  where $\tilde P$ is obtained from  the sum of iterated brackets that gives rise to  $P$ by replacing  $Bh$,    $Bh_0$, $Bh_1$, $Aw_1$  by  $h$,  $h_0$, $h_1$, $w_1$ respectively. 
                         Now  Part (b) of the claim follows since $P, \tilde P\in \oplus_i V_{i\alpha}$.   
                        
                      Next we prove Part (a) of the Claim.   
                      As $h, Bh\in \oplus_{\lambda_i\ge \alpha}V_{\lambda_i}$, for $1\le j<\alpha$,  by Claim (b) we have 
                      \begin{align*}
                      \pi_j  (F(h_0)^{-1}* F(h_0*h))
                      &=\pi_{j} ((Bh)^{-1}*F(h_0)^{-1}* F(h_0*h))
                             =A \pi_{j}(h^{-1}*F_0(h_0)^{-1}* F_0(h_0*h))\\
&=A \pi_{j}(F_0(h_0)^{-1}* F_0(h_0*h))
=A\pi_j((-s(\bar{h}_0)*h_0^{-1}*h_0*h*s(\bar{h}_0*\bar h))\\
&=A\pi_j(s(\bar{h}_0*\bar h)-s(\bar{h}_0))
=A(s_j(\bar{h}_0*\bar h)-s_j(\bar{h}_0)).  
\end{align*}

                       
                       \end{proof}

           \begin{corollary}\label{bounded s}
           Let $\Gamma$ be a  fiber   similarity  
            group of $N$. 
              Then there is a constant $C>0$ such that 
               $||s_{\gamma, j}||<C$ for all $\gamma\in \Gamma$ and all $1\le j< \alpha$.  
         
           \end{corollary}
           
           \begin{proof}  The inequality $\frac{C_\gamma}{L^2\Lambda }\le\lambda_{A_\gamma}\le L^2\Lambda  C_\gamma$ we obtained in the proof of 
           Lemma \ref{dilation-relation}  implies that there is a constant $K_0>0$ 
             such that $e^{-\log(\lambda_{A_\gamma}) D}\circ \gamma$ is $K_0$-biLipschitz for  every $\gamma\in \Gamma$.    
              An  element $\gamma\in \Gamma$   with compatible expression 
               $\gamma(h*w)=\gamma(0) * B_\gamma h * A_\gamma w* A_\gamma s_\gamma(\bar h)$ 
              can be written as  $\gamma(h*w)=\gamma(0)* e^{\log(\lambda_{A_\gamma}) D}(B'_\gamma h* A'_\gamma w* A'_\gamma 
           s_\gamma(\bar h))$, where $s_\gamma$ is as before  and $A'_\gamma$ is an isometric  graded automorphism of $  (W, d_{CC})$.   
              It follows that  $e^{-\log(\lambda_{A_\gamma}) D}\circ \gamma$ has  a compatible expression
             $e^{-\log(\lambda_{A_\gamma}) D}\circ \gamma(h*w)=a'_\gamma *B'_\gamma h* A'_\gamma w* A'_\gamma s_\gamma(\bar h)$. 
           The corollary  now  follows by applying Lemma \ref{transpart}
              to $e^{-\log(\lambda_{A_\gamma}) D} \circ \gamma$.

           \end{proof}


           


           
           We   have shown that $G^*\cdot c\subset  {\mathcal H}_j$.   Corollary \ref{bounded s}   implies   $G^*\cdot c$ is bounded.  
            By    Lemma \ref{K lies in H_j}      $K\subset  {\mathcal H}_j$. 
             We next show that      the affine action     has a fixed point  in    $K$. For this purpose we use

                   \noindent{\bf Theorem (Day's fixed point theorem) }\cite{Da61}
\emph{ Let  $K$ be a compact convex subset  of a locally convex  topological   vector 
space  $E$   and let $\Gamma$ be a locally compact group that acts on $K$ by  affine transformations. If $\Gamma$ is amenable 
  and the action  $\Gamma\times K\ra K,  (\gamma, x)\mapsto \gamma\cdot x,$ is separately continuous,     
then the action of $\Gamma$ has a global fixed point in $K$. }

                          \begin{Le}\label{fixedless}
                          The  affine action  associated to the $1$-cocycle $\tilde b_j$   has a fixed point $c\in \mathcal H_j$.  
                                         
                           \end{Le}
                           
           
           \begin{proof}             
                            We equip $E_j$ with the topology of pointwise convergence. Then $E_j$ is a 
                           locally convex topological vector space   and $K$ is a compact convex subset (as $K$ is bounded).   We have observed that the affine action $G^*\times K\ra K$ is separately continuous.  Since $G^*$ is amenable,  
                           by Day's fixed point theorem,    the affine action  has a fixed point in $K$.

                          \end{proof}




       \subsection{Eliminating $s_j$}


                           \begin{Le} Let $1\le j<\alpha$.\newline 
                            Suppose   
                              the  affine action  of $G^*$  on $E_j$  has a fixed point $c\in \mathcal H_j$.    Let     $s: \mathfrak n/\mathfrak w\ra Z(\mathfrak w)$ be the  map   provided by Proposition \ref{construction} 
                             satisfying   $s_j=c$ and  $s_i=0$ for $1\le i\le \alpha$, $i\not=j$
                                 such that the corresponding shear map 
                             $F_0(g)=g*s(\bar g)$ is  biLipschitz. 
                               Then 
                             every   element $\tilde \gamma\in F_0\Gamma F_0^{-1}$ has a  compatible  expression 
                             $\tilde \gamma(h*w)=\tilde \gamma(0)*B_{\gamma}h*A_{\gamma}w*A_{\gamma} \tilde s_{\gamma}(\bar h)  $ with 
                              $\tilde s_{\gamma, j}=0$.\newline
                           
                           \end{Le}


                           \begin{proof}

                            For $\gamma\in \Gamma$,  denote $\tilde \gamma=F_0\circ \gamma\circ F_0^{-1}$.        Consider a compatible expression  $\gamma(h*w)=\gamma(0) *B_\gamma h*A_{\gamma}w* A_\gamma s_{\gamma}(\bar h)$ of $\gamma$.   Since $c$ is a fixed point of the affine action, we have $\pi_{\Psi(\gamma)} c+s_{\gamma,j}=c$ for all $\gamma\in \Gamma$.  
                            Note $F_0$ and $F_0^{-1}$ have the expressions: $F_0(h*w)=h*w*s(\bar h)$,
                             $F_0^{-1}(h*w)=h*w*(-s(\bar h))$.   We now calculate 
                             \begin{align*}
                             \tilde \gamma(h)=F_0\circ \gamma (h*(-s(\bar h)))
                             &=F_0(\gamma(0)*B_\gamma h*A_\gamma[s_\gamma(\bar h)*(-s(\bar h))])\\
                             &=\gamma(0)*B_\gamma h*A_\gamma[s_\gamma(\bar h)*(-s(\bar h))]*s(\overline{\gamma(0)}*\bar B_\gamma \bar h).
                             \end{align*}
                           In particular,  $\tilde\gamma (0)=\gamma(0)*s(\overline{\gamma(0)})$.  Now
                           \begin{align*}
                           s_{\tilde \gamma,j}(\bar h)  &=A^{-1}_\gamma\pi_j(\tilde\gamma(0)^{-1}*\tilde \gamma(h))\\
                           &=A^{-1}_\gamma\pi_j \{(-s(\overline{\gamma(0)}))*B_\gamma h*A_\gamma[s_\gamma(\bar h)*(-s(\bar h))]*s(\overline{\gamma(0)}*\bar B_\gamma \bar h)\}\\
                           &=A^{-1}_\gamma\pi_j \{(-s(\overline{\gamma(0)}))*A_\gamma[s_\gamma(\bar h)*(-s(\bar h))]*s(\overline{\gamma(0)}*\bar B_\gamma \bar h)\}\\
                           &=A^{-1}_\gamma\{-c(\overline{\gamma(0)})+A_\gamma s_{\gamma,j}(\bar h)+A_\gamma (-c(\bar h))+c(\overline{\gamma(0)}*\bar B_\gamma \bar h)\}\\&=s_{\gamma,j}(\bar h)-c(\bar h)+A^{-1}_\gamma c(\overline{\gamma(0)}*\bar B_\gamma \bar h)-A^{-1}_\gamma c(\overline{\gamma(0)})\\
                           &=s_{\gamma,j}(\bar h)-c(\bar h)+(\pi_{\Psi(\gamma)} c)(\bar h)=0.
                           \end{align*}

                           \end{proof}
                           
        The proof of 
                  Proposition \ref{elliminatedim(W)ge 2}  is now complete.

       \section{Conformal structures in the $V_\alpha$ direction}\label{cstructure}

          We continue the proof of Theorem \ref{main-uniform}.  In  Section \ref{eliminate}
            we showed that we can get rid of $s_{\gamma, j}$ for $j<\alpha$ 
              after a conjugation.   
       In this section we  continue to assume    $\dim(W)\ge 2$, $\dim(N/W)\ge 2$.     We    will  first show that, if  $\alpha$ is an integer, then after a  conjugation, $s_{\gamma, \alpha}: \mathfrak n/\mathfrak w\ra Z_\alpha(\mathfrak w)$  is a  Lie group homomorphism for every  $\gamma\in \Gamma$,   and then we  complete the proof of Theorem \ref{main-uniform} under the  assumptions   $\dim(W)\ge 2$, $\dim(N/W)\ge 2$.
          Here we are using the  identification between  simply connected nilpotent Lie groups and their Lie algebras via the exponential map.  

        We  will  imitate the proof of Tukia's theorem, with the last step different.   As   biLipschitz maps of  Carnot-by-Carnot groups  in general are not differentiable,   we    can not directly work with the differentials.   Instead   we   look for differentiability in  the $V_\alpha$ direction. 
          For each biLipschitz map $F: \mathfrak n\ra \mathfrak n$ and each $p\in \mathfrak n$, we  consider the
           differential of the map 
           $$F_{p, \alpha}:=(\pi_{\alpha}\circ   F_p)|_{V_\alpha}: V_\alpha\ra V_\alpha,$$
            with  $F_p=L_{F(p)^{-1}}\circ F\circ   L_p$,    where  
              $L_g$ is left translation  by the element $g$  and  $\pi_{\alpha}: \mathfrak n\ra V_\alpha$ is the projection with respect to the decomposition $\mathfrak n=\oplus_j V_{\lambda_j}$.  
           
           In the next two subsections   we assume  $\alpha$ is an integer.  
           
        \subsection{Differential of $F_{p, \alpha}$}\label{differential}
        
       In this  subsection we first find a formula for the differential of $F_{p, \alpha}$ at $0$ and then show that this differential satisfies  the chain rule
        (Lemma \ref{chainrule}).

       Let $(N,D)$ be  Carnot-by-Carnot and 
        $F:   \mathfrak n\ra \mathfrak n$ be a  biLipschitz map  satisfying the 
        assumptions of Lemma \ref{sj}. 
          Then  $F$ has a compatible expression 
        $F(h*w)=F(0)*Bh*Aw *A  s(\bar h).$
          Write  
             $s=\sum_j s_j$ with  $s_j: \mathfrak n/\mathfrak w\ra Z_j(\mathfrak w)$.
             By Lemma \ref{transpart}  
      $s_j$ is 
             $\frac{j}{\alpha}$-Holder for each $1\le j\le \alpha$.                
             

       To simplify the calculations we will work in a suitable quotient of $\mathfrak n$.  
       Notice that $\oplus_{\lambda_j>\alpha} V_{\lambda_j}$ is an ideal of $\mathfrak n$.  
       Denote  $\bar{\mathfrak n}_\alpha= \mathfrak n/{(\oplus_{\lambda_j>\alpha} V_{\lambda_j})}$ and let 
       $P_\alpha: \mathfrak n\ra \bar {\mathfrak n}_\alpha$ be the quotient  homomorphism.  
        Observe that if $x\in \oplus_{\lambda_j\ge \alpha}V_{\lambda_j}$, then $P_\alpha(x)$ lies in the center of $\bar {\mathfrak n}_\alpha$. 
      Let $h\in H$, $w\in \mathfrak w$. 
 There is some  $w'\in \mathfrak w$ such that $h+w=h*w'$.
 By applying $P_\alpha$ to both sides  of  $h+w=h*w'$ and noting that $P_\alpha(h)$ lies in the center of 
 $\bar {\mathfrak n}_\alpha$, we get $P_\alpha(w)=P_\alpha(w')$.   
          Now write  $p=h_0*w_0$  with $h_0\in H$ and $w_0\in \mathfrak w$,   
         and  $h_0*h=\tilde h+\tilde w=\tilde h*{\tilde {\tilde w}}$  for some $\tilde h\in H$ and $\tilde w, {\tilde {\tilde w}}\in \mathfrak w$.
      Using BCH formula we see that $\tilde w, {\tilde {\tilde w}}\in \oplus_{j>\alpha}W_j$. 
      By applying $P_\alpha$ to both sides of $h_0*h=\tilde h+\tilde w$ we get $P_\alpha(\tilde h)=P_\alpha (h_0)+P_\alpha(h)$.  
         It follows that $P_\alpha(B\tilde h)=P _\alpha (Bh_0)+P_\alpha(Bh)$ and so  $P_\alpha((Bh_0)^{-1}* B \tilde h)=P _\alpha (Bh)$.

    We are ready to find a formula for $F_{p,\alpha}$.     Recall     $p=h_0*w_0$.     Write   
$$ p*(h+w)=h_0* w_0 *h*w' 
= h_0 * h * (h^{-1}*w_0*h) * w'
= \tilde h*{\tilde {\tilde w}} * (h^{-1}*w_0*h) * w' .$$
    So 
$$ F(p)=F(0) * B h_0 * Aw_0*  A s(\overline{h_0})$$  and
 $$F(p*(h+w))=F(0) * B  \tilde h * A {\tilde {\tilde w}} * A(h^{-1}*w_0*h) * Aw' * As(\overline{h_0} *\bar{h}).  $$
 Therefore     (see explanation after the display formula)
 \bea 
  & & P_\alpha( L_{F(p)^{-1}}\circ F\circ L_p(h+w))\\
&= & P_\alpha(L_{F(p)^{-1}}(F(0) * B  \tilde h * A {\tilde {\tilde w}} * A(h^{-1}*w_0*h) * Aw' * As(\overline{h_0}*\bar{h})))\\
 &=& P_\alpha( As(\overline{h_0})^{-1}* A w_0^{-1}* (B h_0)^{-1} *B  \tilde h * A {\tilde {\tilde w}} * Aw_0 * Aw*
  As(\overline{h_0} *\bar{h}))\\
  &=& P_\alpha( As(\overline{h_0})^{-1}* A w_0^{-1}* (B h_0)^{-1} *B  \tilde h * Aw_0 * Aw*
  As(\overline{h_0} * \bar{h}))\\
 &=&P_\alpha( A s(\overline{h_0})^{-1}* (B h_0)^{-1} *B \tilde h * Aw* A s(\overline{h_0} *\bar{h}))\\
&=& P_\alpha(A s(\overline{h_0})^{-1}* B h*A w* A  s(\overline{h_0} * \bar{h}))\\
&= &P_\alpha(B h + A w + A s(\overline{h_0}*\bar{h})- A s(\overline{h_0})).\\
 \eea 
     For the second equality we used $P_\alpha(h^{-1}*w_0*h)=P_\alpha (w_0)$ (as $P_\alpha(h)$ lies   in the center of $\bar {\mathfrak n}_\alpha$)  and $P_\alpha(w')=P_\alpha(w)$.  
     For  the third equality we used  $P_\alpha({\tilde {\tilde w}})=0$ 
      (as ${\tilde {\tilde w}}\in \oplus_{j>\alpha}W_j$).   For  the fourth equality we used $P_\alpha(A w_0^{-1}* B h_0^{-1} *B  \tilde h * Aw_0)=P_\alpha (B h_0^{-1} *B  \tilde h)$ (as $P_\alpha (B h_0^{-1} *B  \tilde h)$ lies in the center of $\bar {\mathfrak n}_\alpha$).   For the fifth equality we used  $P_\alpha((Bh_0)^{-1}* B \tilde h)=P_\alpha (Bh)$. For the last 
      equality we used the fact that $P_\alpha(Bh)$ lies in the center of $\bar {\mathfrak n}_\alpha$  and that 
      $Aw$, $A s(\overline{h_0}*\bar{h})$ and $A s(\overline{h_0})$ commute with each other (as  $s$ takes values in $Z(\mathfrak w)$).

      After applying $\pi_\alpha$ to both sides  of the above display formula we get
      \begin{equation}\label{pre-derivative}
      \pi_\alpha\circ  F_p(h+w)=B \pi_\alpha(h)+A \pi_\alpha(w)+A s_\alpha(\overline{h_0}*\bar{h})- 
       A s_\alpha(\overline{h_0}).
       \end{equation}  
      This calculation will be used in the proofs of  Lemmas  \ref{psi(F)}  and    \ref{chainrule}.
      When $h\in H_1$ and $w\in W_\alpha$, we have $\pi_\alpha(h)=h$ and $\pi_\alpha(w)=w$ and so 
       $F_{p, \alpha}: V_\alpha\ra V_\alpha$ is given by 
       the formula  
        $$F_{p,\alpha}(h+w)=B h+A w + A s_\alpha(\overline{h_0}*\bar{h})- A s_\alpha(\overline{h_0}).$$

       \begin{lemma}\label{psi(F)}
        Let $\psi(F): \mathfrak n/\mathfrak w\rightarrow V_\alpha$ be the map given by 
       $\psi(F)(\bar h)=Bh_1+As_\alpha(\bar h)$,   where $h_1$ is the $H_1$ component of $h\in H$. Then $\psi(F)$ is Lipschitz with the Lipschitz constant bounded above by a constant depending only on $H$ and  the 
       biLipschitz constant of $F$.
       \end{lemma}


       
        \begin{proof}
         By  setting $w=0$ in  formula   (\ref{pre-derivative})    we   get 
       $$\pi_\alpha(F(p)^{-1}*F(p*h))=B h_1+A s_\alpha(\overline{h_0}*\bar{h})- 
       A s_\alpha(\overline{h_0})=\psi(F)(\bar h_0*\bar h)-\psi(F)(\bar h_0). $$ 
       Since $F$ is $L$-biLipschitz for some $L\ge 1$ we have
       $$|\psi(F)(\bar h_0*\bar h)-\psi(F)(\bar h_0)|^{\frac{1}{\alpha}}\le 
       d(0, F(p)^{-1}*F(p*h))=d(F(p), F(p*h))\le L \,d(p, p*h)=L\, d(0,h).$$
       On the other hand, since $\pi|_H: H\ra {\bar{\mathfrak n}}$ is a linear bijection, $H$ is stable under the automorphisms $e^{tD}$ ($D$ acts by Euclidean dilation on each $V_{\lambda_j}$), and $(\pi|_H)\circ (e^{tD}|_H)=e^{t\bar D}\circ (\pi|_H)$, there is a constant $L_0\ge 1$   that depends on $H$ but independent of the biLipschitz map $F$ such that
        $(1/{L_0}) d^{\frac{1}{\alpha}}_{CC}(0, \bar h)\le d(0,h)\le L_0 d^{\frac{1}{\alpha}}_{CC}(0, \bar h)$ for any $h\in H$. It follows that   $|\psi(F)(\bar h_0*\bar h)-\psi(F)(\bar h_0)|^{\frac{1}{\alpha}}\le LL_0 
       d^{\frac{1}{\alpha}}_{CC}(0, \bar h)$.  
       

       \end{proof}

      Now we can try to compute the differential $dF_{p, \alpha}(0)$ of $F_{p, \alpha}$ at the origin $0$:
      \begin{align*}
      dF_{p, \alpha}(0)(h+w)&=\lim_{t\ra -\infty}\frac{ F_{p, \alpha}(e^{\alpha t}(h+w))}{e^{\alpha t}} \\
      &=\lim_{t\ra -\infty}\frac{B e^{\alpha t}h + A e^{\alpha t}w + A s_\alpha(\overline{h_0}*e^{\alpha t}\bar{h})- A s_\alpha(\overline{h_0}) }{e^{\alpha t}} \\
      &=Bh+Aw+A\lim_{t\ra -\infty}\frac{s_\alpha(\overline{h_0}*e^{t\bar D}\bar{h})- s_\alpha(\overline{h_0})}{e^{\alpha t}}.
      \end{align*}
       In the above we used the fact that $\bar D|_{\bar V_1}=\alpha \cdot \text{Id}_{\bar V_1}$.
        Notice that at a point  $\overline{h_0}$ where the Lipschitz map $s_\alpha: \mathfrak n/\mathfrak w\ra Z_\alpha(\mathfrak w)$ is   Pansu differentiable,
          we have that 
           $\lim_{t\ra -\infty}\frac{s_\alpha(\overline{h_0}*e^{t\bar D}\bar{h})-  s_\alpha(\overline{h_0})}{e^{\alpha t}}=Ds_\alpha(\overline{h_0})(\bar h)$, where $  Ds_\alpha(\overline{h_0}): \mathfrak n/\mathfrak w\ra Z_\alpha(\mathfrak w)$ is the Pansu differential of $s_\alpha$ at $\overline{h_0}$. 
           Recall  that   a Lipschitz map  between Carnot groups  is Pansu differentiable   a.e., 
       see \cite{P89}.  
        Since $s_\alpha$ is Pansu differentiable  at a.e. $\overline{h_0}\in \mathfrak n/\mathfrak w$, we see that at a.e. $p=h_0*w_0\in \mathfrak n$,
          the map $F_{p, \alpha}: V_\alpha \ra V_\alpha$  is differentiable at the origin with differential 
           $dF_{p, \alpha} (0)$ given by:
            \begin{equation}\label{derivative}
             dF_{p, \alpha}(0)(h+w)=  Bh+Aw+ A Ds_\alpha(\overline{h_0})(\bar h).
             \end{equation}
        We shall denote $D_\alpha F(p)=dF_{p, \alpha} (0)$.

      The differential  $D_\alpha F(p)$  should be thought as a counterpart for the restriction of Pansu differential to the first layer, although our groups are not Carnot.  It satisfies the chain rule.
      

 \begin{Le}\label{chainrule}
      Let $F, \tilde F: \mathfrak n\ra \mathfrak n$ be  
      biLipschitz maps  satisfying the 
        assumptions of Lemma \ref{sj}. 
            Then 
         $D_\alpha( F\circ \tilde F)(p)= D_\alpha  F (\tilde F(p)) \circ D_\alpha\tilde F (p)$ for a.e. $p\in \mathfrak n$.
          
      \end{Le}
      
      \begin{proof}  Let $p\in \mathfrak n$ be a point  such that $D_\alpha(F\circ \tilde F)(p) $,   $D_\alpha  F (\tilde F(p))$  and  $ D_\alpha\tilde F (p)$
       all exist.   
      We need to show 
          \begin{equation}\label{chain}
          D_\alpha(F\circ \tilde F)(p)(X)= D_\alpha F (\tilde F(p)) \circ D_\alpha\tilde F (p)(X), \;\; \forall X\in V_\alpha.
          \end{equation}
      Since $V_\alpha=W_\alpha\oplus H_1$, it suffices to establish (\ref{chain}) for $X\in W_\alpha$ and $X\in H_1$.

      The maps $F$ and $\tilde F$ have  compatible expressions
       $F(h*w)=F(0)*Bh*Aw*A s(\bar h)$,  $\tilde F(h*w)= \tilde F(0) *\tilde Bh*\tilde Aw*A \tilde s(\bar h)$.  
      Let $X=w\in W_\alpha$.   By (\ref{derivative}),    $D_\alpha F(p)(w)=Aw$. It follows that 
        ${D_\alpha}  F (\tilde F(p)) \circ D_\alpha\tilde F (p)(w)=D_\alpha F (\tilde F(p))(\tilde Aw)= A (\tilde Aw)$. 
          On the other hand, 
        $   D_\alpha(F\circ \tilde F)(p)(w)=A \tilde A(w)$ as on cosets of $W$  the map $F\circ \tilde F$ acts by $A  \tilde A$. 
         Hence (\ref{chain})   holds for $X\in W_\alpha$. We next consider the case $X=h\in H_1$.



        
        Consider the paths $\tilde c(t)=L_{\tilde F(p)^{-1}}\circ \tilde F(p* (th))$
          and $c(t)=L_{(F\circ \tilde F(p))^{-1}}\circ F \circ L_{\tilde F(p)}(\tilde c(t))$,    and set $\tilde c_\alpha(t)=\pi_\alpha (\tilde c(t))$,
              $c_\alpha(t)=\pi_\alpha  ( c(t))$.   
        Since $D_\alpha \tilde F(p)$   and  $D_\alpha( F\circ \tilde F)(p) $ exist, we have $\tilde c'_\alpha(0)=D_\alpha \tilde F(p)(h)$
          and $ c'_\alpha(0)=D_\alpha(F\circ \tilde F)(p)(h) $.

          Write $\tilde c(t)=h(t)+w(t)$ with $h(t)\in H$ and $w(t)\in \mathfrak w$.   
          Then $$D_\alpha \tilde F(p)(h)=\tilde c'_\alpha(0)=\lim_{t\ra 0}\frac{\pi_\alpha h(t)}{t} +
          \lim_{t\ra 0}\frac{\pi_\alpha w(t)}{t}.$$   
            Set $h_1=\lim_{t\ra 0}\frac{\pi_\alpha h(t)}{t}\in H_1 $  and $w_1=\lim_{t\ra 0}\frac{\pi_\alpha w(t)}{t} \in W_\alpha$.  Then 
            $D_\alpha \tilde F(p)(h)=h_1+w_1.$

           We also write  $\tilde F(p)=\tilde h_0*\tilde w_0$ with
           $\tilde h_0\in H$ and $\tilde w_0\in \mathfrak w$.  
            We have
            $$D_\alpha F(\tilde F(p))(D_\alpha \tilde F(p)(h))=
            D_\alpha F(\tilde F(p))(h_1+w_1)=Bh_1+A w_1+ A Ds_\alpha(\overline{\tilde{h}_0})(\overline{h_1}).$$
        By (\ref{pre-derivative}),  
          $$c_\alpha(t)=\pi_\alpha\circ L_{(F\circ \tilde F(p))^{-1}}\circ F \circ L_{\tilde F(p)}(\tilde c(t))
          =B\pi_\alpha h(t)+A\pi_\alpha w(t) +A  s_\alpha (\overline{\tilde{h}_0}*\overline{h(t)})- As_\alpha(\overline{\tilde{h}_0}).$$
          Hence 
          \begin{align*}  D_\alpha(F\circ \tilde F)(p)(h)=
          c'_\alpha(0)&=B\lim_{t\ra 0}\frac{\pi_\alpha h(t)}{t}+A \lim_{t\ra 0}\frac{\pi_\alpha w(t)}{t}+
          A\lim_{t\ra 0}\frac{s_\alpha (\overline{\tilde{h}_0}*\overline{h(t)})-A s_\alpha(\overline{\tilde{h}_0})}{t}\\
          &=   Bh_1+A w_1+A\lim_{t\ra 0}\frac{s_\alpha (\overline{\tilde{h}_0}*\overline{h(t)})- s_\alpha(\overline{\tilde{h}_0})}{t}.
          \end{align*}
          It now suffices to  show  
          $$\lim_{t\ra 0}\frac{s_\alpha (\overline{\tilde{h}_0}*\overline{h(t)})- s_\alpha(\overline{\tilde{h}_0})}{t}=Ds_\alpha(\overline{\tilde{h}_0})(\overline{h_1}).$$

      Since $\pi\circ \tilde F(x)=\overline{\tilde{F}(0)}*\bar{\tilde{B}}(\bar x)$, we get
       $$\overline{h(t)}=\pi(\tilde c(t))=((\pi\circ \tilde F)(p))^{-1}*(\pi\circ \tilde F)(p*th)=\bar{\tilde {B}}(\bar p)^{-1}* 
       \bar{\tilde{B}}(\bar p*t\bar h)=\bar{\tilde{B}}(t\bar h)=t\bar{\tilde{B}}(\bar h).$$
             Hence 
             $$\lim_{t\ra 0}\frac{s_\alpha (\overline{\tilde{h}_0}*\overline{h(t)})- s_\alpha(\overline{\tilde{h}_0})}{t}=
             \lim_{t\ra 0}\frac{s_\alpha (\overline{\tilde{h}_0}*t\bar{\tilde{B}}(\bar h))- s_\alpha(\overline{\tilde{h}_0})}{t}=
              Ds_\alpha(\overline{\tilde{h}_0})(\bar{\tilde {B}}(\bar h)).$$
               Finally we notice that 
          $D_\alpha \tilde F(p)(h+w)=\tilde Bh+\tilde Aw+\tilde A D\tilde s_\alpha(\bar p)(\bar h)$
        implies $\bar h_1=\pi(D_\alpha \tilde F(p)(h))=\bar{\tilde {B}}\bar h$.

      \end{proof}

        Now we are ready to show that if  $\Gamma$ is a fiber similarity 
        group of $N$,     
         then 
        the differentials   $\{D_\alpha \gamma(p)|p\in N, \gamma\in \Gamma\}$ 
        are ``uniformly quasiconformal''. This result is needed in order to run Tukia's argument for the existence of invariant conformal structure.

      \begin{lemma}\label{uniform-alpha}
      Let $\Gamma$ be a  fiber similarity 
        group of $N$.  
       Then there is a constant $C\ge 1$ such that for every $\gamma\in \Gamma$, 
        the differential $D_\alpha \gamma(p)$ is $C$-quasiconformal for a.e. $p\in N$.

      \end{lemma}
      
      \begin{proof}
      By the discussion in Section \ref{eliminate}, there is  a  constant $K_0\ge 1$ with the following property: for each $\gamma\in \Gamma$,   there is some $t_\gamma\in \mathbb R$ such that 
            $\gamma':=e^{-t_\gamma D}\circ \gamma$ is $K_0$-biLipschitz,  and 
            $\gamma'$ acts on cosets of $W$  by isometric graded  automorphism and induces an isometry of $N/W$.   
      If $\gamma(h*w)=\gamma(0) * B_\gamma h*A_\gamma w*A_\gamma s_\gamma(\bar h)$ is a compatible expression for $\gamma$, then  $\gamma'$
       has a compatible expression  given by 
         $\gamma'(h*w)=\gamma'(0)* B'_\gamma h*A'_\gamma w*A'_\gamma s_\alpha(\bar h)$  
           where  
        $A'_\gamma= e^{-t_\gamma D}\circ A_\gamma$ is an isometry of $(\mathfrak w, d_{CC})$ and  $B'_\gamma= e^{-t_\gamma D}\circ B_\gamma$  is such that $\bar B'_\gamma$ is an isometry of $(\mathfrak n/\mathfrak w, d_{CC})$.   
       By the formula for 
        $D_\alpha F(p)$ we have that $D_\alpha (\gamma')(p)$ is the composition of 
        $D_\alpha (\gamma)(p)$  with  a standard Euclidean dilation:  
         $D_\alpha (\gamma')(p)(h+w)=e^{-t_\gamma \alpha} D_\alpha \gamma (p)(h+w)$ for $h\in H_1, w\in W_\alpha$.     Hence it suffices to show that 
       there is a constant $C\ge 1$ such that for every $\gamma\in \Gamma$, 
        the differential $D_\alpha \gamma'(p)$ is $C$-quasiconformal for a.e. $p\in N$.

         We fix an inner product on $V_\alpha$ so that $H_1$ and $W_\alpha$ are perpendicular to each other.  
        Notice that the equality $D_\alpha \gamma'(p)(h+w)=A'_\gamma w+ D\psi(\gamma')(\bar p)(\bar h)$ holds, where $\psi(F)$ was defined in Lemma \ref{psi(F)}.  As $A'_\gamma$ is an isometry and $\psi(\gamma')$ is Lipschitz  with Lipschitz constant bounded above by a constant depending only on $H$ and the biLipschitz constant of $\gamma'$, we see that there is a constant $C_1>0$   such that for each $\gamma \in \Gamma$,  the norm of the linear map $D_\alpha \gamma'(p): V_\alpha\ra V_\alpha$ is bounded above by $C_1$  for a.e. $p\in N$.  
         In particular it also holds for $\gamma^{-1}$.  By   applying Lemma \ref{chainrule} 
           to the composition  $\gamma\circ \gamma^{-1}=\text{Id}$ we  conclude that
               $D_\alpha \gamma'(p)$ is $C_1$-biLipschitz and so is $C_1^2$
               quasiconformal.


      \end{proof}

       \subsection{Measurable conformal structure  in the  $V_\alpha$   direction}\label{mconformal}

        In this subsection we shall show that, after a  conjugation, $s_{\gamma, \alpha}: \mathfrak n/\mathfrak w\ra Z_\alpha(\mathfrak w)$  is a  Lie group homomorphism for every  $\gamma\in \Gamma$.   See Proposition \ref{tukiaV}.  
        We shall  modify the proof of Tukia's theorem  \cite{T86},    see  also \cite{D10} for the foliated version.

       Fix an inner product on $V_\alpha$ and an orthonormal basis of $V_\alpha$ with respect to this inner product. Denote $n_\alpha=\dim(V_\alpha)$.  Then we can identify a linear transformation of $V_\alpha$ with an $n_\alpha\times n_\alpha$ matrix.  
        Denote by $SL(V_\alpha)$ (the special linear group)  the group of linear transformations of $V_\alpha$ whose  matrices have determinant equal to $1$, and  $SO(V_\alpha)\subset SL(V_\alpha)$ the subgroup consisting of linear transformations  that preserve  the inner product.  
       Let $X=SL(V_\alpha)/SO(V_\alpha)$.  Recall that,  $X$ 
          is a symmetric space of non-compact type  (see table V on page 518 of \cite{Hel78})   and so has nonpositive sectional curvature.   We denote by $\rho$ the distance on $X$.

          A   measurable conformal structure on $N$ in the  $V_\alpha$   direction is an 
           essentially bounded    
  measurable map 
 $$\mu: U \ra     X$$
  defined on a full measure subset $U\subset N$.   This is just a measurable way of assigning inner products (up to a scalar multiple) 
     in the direction of  $V_\alpha$.  
       To simplify language, we  will drop ``in the $V_\alpha$ direction'' and will just say 
  ``measurable conformal structure''.

        Let $\mu$ be a  measurable conformal structure  on $ N$ 
           and     $F: N\ra N$  a fiber similarity  map. 
            The pull-back
  measurable 
  conformal structure 
  $F^*\mu$ is defined  by:
   $$(F^*\mu) (p)=(D_\alpha F(p))[\mu(F(p))]:=(\text{det} D_\alpha F(p))^{-\frac{2}{\text{dim} V_\alpha}}(D_\alpha F(p))^T \mu(F(p)) D_\alpha F(p) , \;\;\;\text{for a.e.}\; p\in N.$$
      This is   analogous  to the  pull-back  of a Riemannian metric under a  diffeomorphism.
      Here we are using the fact that   $D_\alpha F(p)$ exists a.e.,  see Section \ref{differential}.

       \begin{corollary}\label{compositionrule}
      $(\gamma_2\gamma_1)^*\mu=\gamma_1^*(\gamma_2^*\mu)$ holds for all $\gamma_1, \gamma_2\in \Gamma$.
      \end{corollary}
      
      \begin{proof}
      It  follows  immediately from the  chain rule  $D_\alpha(\gamma_2\gamma_1)(p)= D_\alpha \gamma_2 (\gamma_1(p)) \circ D_\alpha\gamma_1 (p)$.

      \end{proof}

      A   fiber similarity  map $F$  is called conformal with respect to the measurable  conformal structure $\mu$
            if $F^*\mu=\mu$.     Tukia's   argument  together with  Corollary \ref{compositionrule}   and Lemma \ref{uniform-alpha}  
              then yield  that   $\Gamma$ has  an invariant measurable conformal structure; that is, 
       there is a      measurable conformal structure $\mu$  on $ N$  
       such that every $\gamma\in \Gamma$ is conformal with respect to $\mu$. 
        We may  assume $\Gamma$ is countable: 
        let $\Gamma_0$ be  a countable subgroup of $\Gamma$ that is dense in $\Gamma$ in   the topology of uniform convergence on compact subsets  of $N$;    if  $\Gamma_0$ can be conjugated into the similarity group of $(N, d)$ for some $D$-homogeneous distance $d$, then  the same map conjugates $\Gamma$ into the group of similarities of 
         $(N, d)$    as the limits of similarities are similarities.

       We next recall the notion of radial limit points. Let $S=N\rtimes_D \mathbb R$  be the  Heintze group associated with $(N, D)$  and  $\mathcal H: S\ra \mathbb R$  the height function given by $\mathcal H(x, t)=t$.  
        Let $\mathcal P(N)=\{(\xi_1, \xi_2)\in N\times N| \xi_1\not=\xi_2\}$, where we view $N=\partial S\backslash\{\infty\}$.  
          Let     $\chi: \mathcal P(N)\ra S$ be the map   that assigns to each   pair
     $(\xi_1, \xi_2)\in \mathcal P(N)$    the highest point on the geodesic $\xi_1\xi_2$; that is,  
      $\mathcal H(\chi(\xi_1, \xi_2))=\max \{\mathcal H(p)|p\in \xi_1\xi_2\}$.    
         In a sense, $\chi(P)$ is  a center of the triple $(\infty, \xi_1, \xi_2)$: it is where the two  geodesics  that join $\infty$ to  $\xi_1$ and  $\xi_2$ respectively  diverge from each other.   
          We observe that for any compact $C\subset S$, the set $\chi^{-1}(C)$ is compact in 
            $\mathcal P(N)$.

        The group     $\Gamma$  acts diagonally on  $\mathcal P(N)$:  
         $g(\xi_1, \xi_2)=(g(\xi_1), g(\xi_2))$.      
         
    \begin{definition}  \label{radial:defn}
           A point $\xi\in N$ is said to be  a  radial limit point of
            $\Gamma$ if   there  exists  
            a sequence of elements $\{h_i\}_{i=1}^\infty$ of $\Gamma  $  with the following property:
                           for any  pair  $P=(\xi_1, \xi_2)\in 
             \mathcal P(N)$, 
             and       any   complete  geodesic $\sigma$    asymptotic to $\xi$,        
              there exists a constant $C>0$          with           $\chi(h_i(P))\ra \xi$  and   $d(\chi(h_i(P)), \sigma)\le C$.  
              \end{definition}

        \begin{proposition}\label{tukiaV}
       There exists a biLipschitz map $F$ of $N$ such that each element of $F\Gamma F^{-1}$ has a
        compatible expression 
       $h*w\mapsto a*Bh*Aw*As(\bar h)$ such that $s_\alpha: \mathfrak n/\mathfrak w\ra Z_\alpha(\mathfrak w)$ is a  Lie group homomorphism.
      
      \end{proposition}

      \begin{proof}
        We equip $S=N\rtimes_D \R$ with a left invariant Riemannian metric such that $N$ and $\mathbb R$ are perpendicular to each other.
        The left translations $L_{(0,t)}$ are isometries of $S$ and 
        translate the vertical geodesic 
         $\sigma (s)=(0, s)$  above  $0\in N$  and 
          the boundary homeomorphisms induced  by them are the  automorphisms  $e^{tD}$    of $N$ generated by the derivation $D$.  
          
          Let $\mu$ be 
          a  $\Gamma$-invariant measurable conformal structure on $N$. 
       As $\mu$ is measurable,    it is approximately continuous  a.e. in $N$, see Theorem 2.9.13 in  \cite{F69}.   Let $p\in N$ be     a radial limit point of $\Gamma$  and also a point at which $\mu$ is 
       approximately continuous.  
        After applying  a  left translation 
         we may assume $p=0$ is the origin of $N$.  
                Fix a pair  $P\in \mathcal P(N)$  and 
               let $\sigma$ be the vertical geodesic (in $S$) above $0$.  
              Since $0$ is a radial limit point of $\Gamma$, there exists 
            a sequence of elements $\{\gamma_i\}_{i=1}^\infty$ of $\Gamma$    and 
              a constant $C>0$          with           $\chi(\gamma_i(P))\ra 0$  and   $d(\chi(\gamma_i(P)), \sigma)\le C$.  
        Fix a point $x_0\in \sigma$.     
              For each $i$ there is some $t'_i\in \mathbb R$ with $t'_i\ra +\infty$ as $i\ra \infty$ such that 
               $d(L_{(0,t'_i)}(\chi(\gamma_i(P))), x_0)\le C$.
                Since $L_{(0,t'_i)}$ is an isometry of $S$, we have $L_{(0,t'_i)}\circ \chi=\chi\circ  e^{t'_iD}$.  
                  Hence $d(\chi\circ e^{t'_iD} \circ \gamma_i(P), x_0)\le C$  and  so the set $\{e^{t'_iD}\circ \gamma_i(P)\}_{i=1}^\infty$ lies in the compact subset
                   $\chi^{-1}\bar B(x_0, C)$.    It follows that $\{e^{t'_iD}\circ \gamma_i\}_{i=1}^\infty$ is a compact family of 
                     biLipschitz  maps of $N$.           Recall that  there is a Carnot metric 
                      $d_{CC}$  on $W$  with the property that 
                   for    each $\gamma\in \Gamma $ 
                    there is some $t_\gamma\in \mathbb R$ such that 
                    $\gamma':=e^{-t_\gamma D}\circ \gamma$ 
                    has a compatible  expression   $\gamma'(h*w)={\gamma'(0)}*B_{\gamma} h*A_{\gamma}w *
               A_{\gamma}  s_{\gamma}(\bar h)$, where   $A_{\gamma}$ is a graded isomorphism of $W$ that is also an isometry of $(W, d_{CC})$.  
                            The compactness of the family   $\{e^{t'_iD}\circ \gamma_i\}_{i=1}^\infty$  implies that $\{t'_i+t_{\gamma_i}\}$ is a bounded sequence and so the family  $\{e^{-t_{\gamma_i}D}\circ \gamma_i\}_{i=1}^\infty$ is also compact.  
      Set   $f_i=e^{-t_{\gamma_i}D}\circ \gamma_i$.   
        By passing to  a  subsequence, we may assume $f_i$ converges uniformly on compact subsets  to a biLipschitz map
        $f:  N\ra  N$.   Since each  $f_i$ is   a fiber similarity, so is $f$. 

     Let  $\gamma\in \Gamma$ and denote $\tilde \gamma=f\gamma f^{-1}$, $\tilde \gamma_i=f_i\gamma f_i^{-1}$.   Let  $\mu_i=(f_i^{-1})^*\mu$. 
                    Since $\mu$ is $\Gamma$-invariant,      $\tilde  \gamma_i$ is conformal with respect to $\mu_i$: 
                       $$\tilde \gamma_i^*\mu_i=(f_i^{-1})^*\gamma^*f_i^* (f_i^{-1})^*\mu=(f_i^{-1})^*\gamma^*\mu=(f_i^{-1})^*\mu=\mu_i.$$   
                         Note  $\mu_i=(f_i^{-1})^*\mu=({e^{t_{\gamma_i}D}})^* (\gamma_i^{-1})^*\mu
                         =(e^{t_{\gamma_i}D})^*\mu$. 
                          So for $q\in N$,   
                          $$\mu_i(q)=(e^{t_{\gamma_i}D})^*\mu (q)=D_\alpha e^{t_{\gamma_i}D}(q)[\mu(e^{t_{\gamma_i}D}(q))]
                          =\mu(e^{t_{\gamma_i}D}(q))$$
            since  $D_\alpha e^{t_{\gamma_i}D}: V_\alpha\ra V_\alpha$ is the  standard dilation by $e^{\alpha t_{\gamma_i}}$ and so is conformal.  

       Let $U\subset  N$ be  a  bounded open subset  containing $0$. 
             There is a bounded open subset $U_0$ such that $U\bigcup \cup_i \tilde{\gamma}_i(U)\subset U_0$.  
              Since $\mu$ is approximately continuous at $0$  and $t_{\gamma_i}\ra -\infty$, the equality $\mu_i(q)=\mu(e^{t_{\gamma_i}D}(q))$
               implies that    for any $\epsilon>0$    there are subsets $C_i\subset U_0$ with $|C_i|\ra 0$ as $i\ra \infty$
                  and $\rho(\mu_i(x), \mu(0))\le \epsilon$ for $x\in U_0\backslash C_i$.   
                  Here $|E|$ denotes the measure of a subset $E\subset N$ and  $\rho$ is the distance on the symmetric space $X$.  
                  
                   The maps $\tilde \gamma^{-1}_i$ and $\tilde \gamma^{-1}$ form a compact family of biLipschitz maps.   There is some $L \ge 1$ such that  these maps are all $L$-biLipschitz.  
                  Hence  $|\tilde \gamma_i^{-1}(C_i)|\ra 0$ as $i\ra \infty$. 
                            Set $D_i=C_i\cup \tilde \gamma_i^{-1}(C_i)$. 
                      Now we have $|D_i|\ra 0$ as $i\ra \infty$ and 
                       $\rho(\mu_i(x), \mu(0))\le \epsilon$ and $\rho(\mu_i(\tilde \gamma_i(x)), \mu(0))\le \epsilon$ for 
                        $x\in U\backslash D_i$.                    
     Since  $\tilde \gamma_i$ is $\mu_i$-conformal, we have 
      $\mu_i(x)=D_\alpha\tilde \gamma_i(x) [\mu_i(\tilde \gamma_i(x))]$ for a.e. $x$. 
      Now
   $$    \rho(\mu_i(x), D_\alpha\tilde \gamma_i(x)[\mu(0)])
   =    \rho(\mu_i(\tilde \gamma_i(x)), \mu(0))\le \epsilon    $$
    for a.e. $x\in U\backslash D_i$.         Combining this with  $\rho(\mu_i(x), \mu(0))\le \epsilon$ , we   get 
        \begin{equation}
        \rho(\mu(0),  D_\alpha\tilde \gamma_i(x)[\mu(0)])\le 2\epsilon
        \end{equation}   
     for        a.e.  $x\in U\backslash D_i$.


      We consider compatible expressions   $\tilde \gamma(h*w)=\tilde \gamma(0)*Bh*Aw*A  s(\bar h)$ and 
        $\tilde \gamma_i(h*w)=\tilde \gamma_i(0)*B_ih*A_i w*A_i  s_i(\bar h)$.   
         We know that $\tilde \gamma_i$ converges to $\tilde \gamma$ uniformly on compact subsets.   It follows that $\tilde \gamma_i(0)\ra  \tilde \gamma(0)$ and $A_i\ra A$.  In general 
          we can not conclude that $s_i\ra s$  and  $B_i\ra B$  due to the fact that 
             compatible expressions are not unique.   Define functions $g_i, g: \mathfrak n/\mathfrak w \ra V_\alpha$  by $g_i(\bar h)=B_i h_1+A_i s_{i, \alpha}(\bar h)$ and 
              $g(\bar h)=B h_1+A  s_{\alpha}(\bar h)$, where $h_1$ is the $H_1$ component of $h\in H$.    By considering the $V_\alpha$ component of $\tilde \gamma_i(h)$ and $\tilde \gamma(h)$ we see that $g_i\ra g$ 
        uniformly on compact subsets. 

        For the rest of the proof, when we talk about ``perpendicular'', ``orthonormal'', length $|X|$ of a vector $X\in V_\alpha$,
         inner product $\left<\cdot , \cdot \right>$,  they are all with respect to  $\mu(0)$.   
     Let $X_1, \cdots, X_k$  ($k=\dim(W_\alpha)$)  be an orthonormal  basis of
       $W_\alpha$.   
      For $1\le l \le k$, let $g_{l}(x)=\left<g(x), X_l\right>$, 
      $g_{i,l}(x)=\left<g_i(x), X_l\right>$.  
     Since $\tilde \gamma$ and $\tilde \gamma_i$ are $L$-biLipschitz,    
      Lemma \ref{psi(F)}  implies that 
     there is  some constant $L_1>0$ such that  $g$ and $g_i$ are $L_1$-Lipschitz for all $i\ge 1$.
       It follows that 
       $g_{l}$  and  $g_{i,l}$ are $L_1$-Lipschitz functions on $N/W$
          for all $i\ge 1$, $1\le l\le k$.

          Let $\phi: H_1\ra W_\alpha$   be the linear map such that  
           $\{h+\phi(h)| h\in H_1\}$    is perpendicular to $W_\alpha$.
             Denote by $\psi: \bar{\mathfrak n}\to H_1$ the linear map given by 
              $\psi(\bar h)=(\pi|_{H_1})^{-1}(\bar{\pi}_1(\bar h))$, where    $\bar{\pi}_1: {\bar{\mathfrak n}}\ra \bar{V}_1$ is  the projection with respect to the decomposition
               ${\bar{\mathfrak n}}=\oplus_j \bar{V}_j$  and $\pi:\mathfrak n \to 
               \bar{\mathfrak n}$  is the quotient map.   For $1\le l\le k$,  
          let $L_l: {\bar{\mathfrak n}}\ra \mathbb R$ be the linear map  defined  by 
           $$L_l(\bar h)=
             -\left<X_l,  A\phi(\psi(\bar h))\right>.$$
          
        {\bf{Claim}}:  For each $1\le l\le k$,  $Dg_{i,l}$  converges to $L_l$ in $L_{\text{loc}}^1(N/W)$  as $i\to \infty$.

          We first assume the claim and finish the proof of the Proposition. 
               The claim implies that   for any $h\in H_1$ and any smooth function  with compact support  $\varphi$  defined on $N/W$, 
    \begin{align*}
      \int_{N/W}\varphi(x) L_l(\bar h)dx  & \longleftarrow     \int_{N/W} \varphi(x) Dg_{i,l}(x)(\bar h)dx\\
      =&-\int_{N/W} g_{i,l}(x) D_{\bar h} \varphi(x) dx \longrightarrow -\int_{N/W} g_l(x) D_{\bar h}\varphi(x)dx=\int_{N/W} \varphi(x) D g_l(x) (\bar h)dx ,
      \end{align*}
     which yields   $\int_{N/W} \varphi(x)(Dg_{l}(x)(\bar h)-L_l(\bar h))dx=0$.  
       Here the first convergence follows from the claim;   the second one follows from the fact that $g_{i,l}$ converges to $g_l$ uniformly on  compact subsets;   and  the equalities follow from   integration  by parts.   
     It follows that $D g_{l}(x) =L_l$  for a.e. $x\in N/W$. 
     This implies  $\langle X_l, ADs_\alpha(x)(\bar h)+A \phi(\psi(\bar h))\rangle=-\langle X_l, Bh\rangle$ for a.e. 
      $x\in N/W$ and all $1\le l\le k$.  Hence $ADs_\alpha(x)(\bar h)+A \phi(\psi(\bar h))$ defines a  constant  Lie group homomorphism (independent of $x$)   from $N/W$ to $W_\alpha$.  As $A \phi(\psi(\bar h))$
       is also a group homomorphism, we see that the Pansu differential of  $s_\alpha$  
     is a constant  group homomorphism.  It now follows from $s_\alpha(0)=0$ that 
     $s_\alpha$  
     is a  Lie group homomorphism.


      We now prove  the claim.  
      Recall  $\rho(D_\alpha\tilde \gamma_i(x)[\mu(0)], \mu(0))\le 2\epsilon$ for $x\in U\backslash D_i$. This means for $x\in U\backslash D_i$ the matrix representation for 
      the linear map $D_\alpha\tilde \gamma_i(x): V_\alpha\ra V_\alpha$ with respect to an orthonormal basis of  $(V_\alpha, \mu(0))$ is a constant multiple of a matrix that is very close  to  an orthogonal matrix. Since 
      $D_\alpha\tilde \gamma_i(x)(W_\alpha)=W_\alpha$,     we see that if 
       $x\in U\backslash D_i$  then 
       $D_\alpha\tilde \gamma_i(x):  V_\alpha\ra V_\alpha$  sends vectors in $V_\alpha$ perpendicular to $W_\alpha$
         to vectors almost perpendicular to 
       $W_\alpha$.     On the other hand,  as  $D_\alpha\tilde \gamma_i(x)(w)=A_iw$ for $w\in W_\alpha$  and $\{A_i|i\ge 1\}$ has compact closure (as $A_i\ra A$), there is some $M>0$ such that the operator norm of  $D_\alpha\tilde \gamma_i(x)$ is bounded above by $M$ for all $i$ and  all $x\in U\backslash D_i$.  
         Fix any $h\in H_1$ with $|h|=1$.  
       By the definition of $\phi: H_1\ra W_\alpha$,   we have  $\left<W_\alpha, h+\phi(h)\right>=0$. 
       As  $D_\alpha\tilde \gamma_i(x)(h+\phi(h))=B_i h+A_i \phi(h) + A_i Ds_{i, \alpha}(\bar x)(\bar h)$, there is a constant $\delta>0$ depending only on $\epsilon$ with $\delta\ra 0$ as $\epsilon\ra 0$ such that 
           $$|\left<X_l,  B_i h+A_i \phi (h) + A_i Ds_{i, \alpha}(\bar x)(\bar h)\right>|<\delta$$
            holds for all $x\in U\backslash D_i$  and all $1\le l\le k$. 
              Note  $\psi(\bar h)=h$ (as $h\in H_1$).  
           Since   $A_i\to A$   and  
            $Dg_{i,l}(\bar x)(\bar h)=\left<B_i h+ A_i Ds_{i, \alpha}(\bar x)(\bar h), X_l\right>$,   we   see that  
            $|Dg_{i,l}(\bar x)(\bar h)-L_l(\bar h)|\le 2\delta$  for all $x\in U\backslash D_i$
         and all sufficiently large $i$.    This implies 
       $\int_{U\backslash D_i}   |Dg_{i,l}(\bar x)(\bar h)-L_l(\bar h)| dx\le 2\delta |U|$  for  sufficiently large $i$. 
       On the other hand, as $g_{i,l}$, $i\ge 1$, $1\le l\le k$,   are  $L_1$-Lipschitz, there is a constant $C>0$ such that 
        $|Dg_{i,l}(\bar x)(\bar h)-L_l(\bar h)|\le C$ for all $x\in N$. 
          From this  we get $\int_{D_i} |Dg_{i,l}(\bar x)(\bar h)-L_l(\bar h)| dx\le  C|D_i|$. 
       As $|D_i|\ra 0$, we conclude that 
      $${\overline{\lim}}_{i\ra \infty} \int_{U}   |Dg_{i,l}(\bar x)(\bar h)-L_l(\bar h)| dx\le 2\delta |U|.$$  
       As this holds for all $\epsilon>0$ we have 
       $\lim_{i\ra \infty} \int_{U}   |Dg_{i,l}(\bar x)(\bar h)-L_l(\bar h)| dx=0$. 
       Since this holds for all bounded open subset $U\subset N$ and all $h\in H_1$, we have 
        $Dg_{i,l}\ra L_l$ in $L^1_{\text{loc}}(N/W)$.

      \end{proof}
      
      \subsection{Completing the proof of 
      Theorem \ref{main-uniform} when $\dim(W)\ge 2$ and $\dim(N/W)\ge 2$}
      
      We assume the assumptions of Theorem \ref{main-uniform}   and   in addition  that
       $\dim(W)\ge 2$, and $\dim(N/W)\ge 2$.    
      We first use Section  \ref{eliminate}
          to  get rid of $s_{\gamma, j}$ for $j<\alpha$,   and if $\alpha$ is an integer then apply Proposition \ref{tukiaV}
           to conclude that $s_{\gamma, \alpha}$ is a Lie group homomorphism for all  $\gamma\in \Gamma$, after a possible  further biLipschitz conjugation.   
      We observe that the property $s_{\gamma, j}\equiv 0$ for $j<\alpha$ is preserved when we apply Proposition \ref{tukiaV}:   this is because the conjugating map $f$ in the proof   of Proposition \ref{tukiaV}  is the limit of a sequence $\{f_i\}$   and each $f_i$ is the composition of a group element $\gamma_i\in \Gamma$ with $e^{t_iD}$ for some $t_i$;   since $\gamma_i$ has the above property, so do  $f_i$ and the limit $f$; finally a calculation shows that  if  two biLipschitz maps  $f_1, f_2$ of $N$ have this property then so  does  the composition $f_1\circ f_2$.

      At this point every element in $\Gamma$ has a compatible expression
       $F(h*w)=F(0)*Bh*Aw* A s(\bar h)$, where $s:\mathfrak n/\mathfrak w\ra Z(\mathfrak w)$ has the  properties that
        $s_j=0$ for $1\le j< \alpha$ and if $\alpha$ is an integer then 
         $s_\alpha: \mathfrak n/\mathfrak w\ra Z_\alpha(\mathfrak w)$ is a Lie group homomorphism.  
         We next show that such a map $F$ is an affine map; that is, $L_{F(0)^{-1}}\circ F$ is a  Lie  group automorphism.

       \begin{Le} \label{FtildeF}
        Let  $F, \tilde F: \mathfrak n\ra \mathfrak n $ be   biLipschitz maps
        with compatible expressions
           $F(h*w)=Bh*Aw*A s(\bar h)$  and  $\tilde F(h*w)=Bh*Aw*A\tilde s(\bar h)$.
             If   $s$, $\tilde s$ satisfy
            $s_j=\tilde s_j$ for $1\le j\le \alpha$,    then $F=\tilde F$.

          \end{Le}
          
          \begin{proof}
          We observe that    
          $F(h*w*(s(\bar h))^{-1}*\tilde s(\bar h))=\tilde F(h*w)$.  It follows that 
           $(F^{-1}\circ \tilde F)(h*w)=h*w*( s(\bar h)^{-1}*\tilde s(\bar h))$ is a biLipschitz shear map
            with shear function ${\tilde{\tilde{s}}}$ given by   ${\tilde{\tilde{s}}}=\tilde s-s$.  The assumption implies 
            ${\tilde{\tilde{s}}}_j\equiv 0$ for all $1\le j\le \alpha$. 
           On the other   hand, by Proposition \ref{shear-bilip}, 
            if $\alpha$ is not an integer, then  ${\tilde{\tilde{s}}}_j\equiv 0$ for all $j>  \alpha$,  and if $\alpha$ is an integer, 
               then ${\tilde{\tilde{s}}}_{k\alpha +j}={\tilde{\tilde{s}}}_j^{(k)}\equiv 0$
            for each $k\ge 1$ and $1\le j\le \alpha$.
            It follows that  ${\tilde{\tilde{s}}}=0$  and $F=\tilde F$.

          \end{proof}

          \begin{Le}\label{fisauto}
       Let     $F: \mathfrak n\ra \mathfrak n $ be    a  biLipschitz map  
               with compatible expression
           $F(h*w)=Bh*Aw*A s(\bar h)$.  Suppose 
            $s_j =0$ for $1\le j< \alpha$   and if $\alpha$ is an integer $s_\alpha: \mathfrak n/\mathfrak w\ra Z_\alpha(\mathfrak w)$ is a Lie group homomorphism.    Then $F$ is a Lie group automorphism.

          \end{Le}
          
          \begin{proof}
          Notice that it suffices to show $F_p=F$  for any $p\in N$.    Let $p\in N$ and set $\tilde F=F_p$.  By Lemma \ref{same B},  
              $\tilde F$ has a compatible expression given by
             $\tilde F(h*w)=Bh*Aw*A\tilde s(\bar h)$ for some map $\tilde s: \mathfrak n/\mathfrak w\ra Z(\mathfrak w)$.   
             By Lemma \ref{FtildeF} it now suffices to show 
              $\tilde s_j=s_j$ for all $1\le j\le \alpha$. 
              
         Let $h\in H$.         As $\tilde F(h)=Bh*A\tilde s(\bar h)$, we have
            $\tilde s(\bar h)=A^{-1}((Bh)^{-1}*\tilde F(h))$.  
                  Write $p=h_0*w_0$  and  $h_0*h=h_1*w_1$  with 
                $h_0, h_1 \in H$, $w_0, w_1\in \mathfrak w$.  Notice   $w_1\in  \oplus_{j\ge 2\alpha}W_j$.    We have 
                  $p*h=h_0*w_0*h=h_0*h*(h^{-1}*w_0*h)=h_1*w_1*(h^{-1}*w_0*h)$.   
                  In the following calculations we use  the quotient homomorphism $P_\alpha:  \mathfrak n\ra \bar{\mathfrak n}_\alpha={\mathfrak n}/(\oplus_{\lambda_j>\alpha} V_{\lambda_j})$:  
                   \begin{align*}
                 &P_\alpha( (Bh)^{-1}* \tilde F(h))\\
                 &=P_\alpha((Bh)^{-1}*(F(h_0*w_0))^{-1}*F(h_1*w_1*(h^{-1}*w_0*h)))\\
                   &=P_\alpha( (Bh)^{-1}*As(\overline{h_0})^{-1}* Aw_0^{-1}*(Bh_0)^{-1}*Bh_1* Aw_1*A(h^{-1}*w_0*h)*A s(\overline{h_0}*\bar h))\\
                   &=P_\alpha((Bh)^{-1}*As(\overline{h_0})^{-1}* Aw_0^{-1}*(Bh_0)^{-1}*Bh_1* Aw_1*(Bh)^{-1}*Aw_0*(Bh)*A s(\overline{h_0}*\bar h))\\
                   &=P_\alpha( (Bh)^{-1}*As(\overline{h_0})^{-1}* Aw_0^{-1}*(Bh_0)^{-1}*Bh_1*Aw_0*A s(\overline{h_0}*\bar h))\\
         &=P_\alpha((Bh)^{-1}*(Bh_0)^{-1}*Bh_1*As(\overline{h_0})^{-1}*A s(\overline{h_0}*\bar h))\\     
                 &=P_\alpha(A(s(\overline{h_0}*\bar h)-s(\overline{h_0}))).
                   \end{align*}
                     For the third equality we used   (\ref{cc}).   
                    In the  4th equality we used $P_\alpha(Aw_1)=0$ and $P_\alpha(Bh)\in Z(\bar{\mathfrak n}_\alpha)$. 
                     For the 5th equality we used  $P_\alpha(Bh_0), P_\alpha(Bh_1)\in Z(\bar{\mathfrak n}_\alpha)$.   For the last equality we used  $P_\alpha((Bh_0)^{-1}*Bh_1)=P_\alpha(Bh)$.
                       It follows that 
                    $\tilde s_j(\bar h)= 
                    s_j(\overline{h_0}*\bar h)-s_j(\overline{h_0})$  for $1\le j\le \alpha$.  
                      By the assumption on $s$, we have $\tilde s_j=0$ for $1\le j< \alpha$
                       and  if $\alpha$ is an integer  then 
                       $\tilde s_\alpha (\bar h)=s_\alpha(\overline{h_0}*\bar h)-s_\alpha(\overline{h_0})=s_\alpha(\bar h)$ as 
                       $s_\alpha: \mathfrak n/\mathfrak w\ra Z_\alpha(\mathfrak w)$ is a homomorphism.  Now we have $\tilde s_j=s_j$ for $1\le j\le \alpha$. By Lemma \ref{FtildeF}, $\tilde F=F$ and so $F$ is an    automorphism.

          \end{proof}

          In this last paragraph we switch back to Lie group notation. 
      At this point $\Gamma$ acts on $N$ by affine maps and is a uniform quasisimilarity group of $N$.  
       We write $\gamma=L_{\gamma(0)}\circ \phi_\gamma$, where $\phi_\gamma$ is the automorphism 
        $L_{\gamma(0)^{-1}}\circ \gamma$.   By Lemma \ref{bilip auto} we know that $\phi_\gamma$ is layer preserving,  that is, 
         $d\phi_\gamma(V_{\lambda_j})=V_{\lambda_j}$ for all $j$.  
       Each $\gamma\in \Gamma$ acts on the cosets of $W$ by an automorphism $A_\gamma$ of $W$ and 
        $A_\gamma$ is the composition of a  Carnot dilation and an isometric   graded isomorphism of $W$. Hence for each $\gamma$, there is a unique $t_\gamma\in \mathbb R$ such that $e^{-t_\gamma D}\circ \gamma$ acts on $W$ by an isometric graded isomorphism.   Since $\Gamma$ is a uniform group of quasisimilarities of $N$,  
           there is a constant $L>0$ such that $e^{-t_\gamma D}\circ \gamma$ is $L$-biLipschitz for all $\gamma\in \Gamma$.         It follows that    for each $j\ge 1$   the linear isomorphism   $d(e^{-t_\gamma D}\circ \phi_\gamma)|_{V_{\lambda_j}}: 
           V_{\lambda_j}\ra V_{\lambda_j} $ is  
      $L$-biLipschitz.  Now we  see that the map $\Gamma\ra GL(V_{\lambda_j})$ given by $\gamma \mapsto 
      d(e^{-t_\gamma D}\circ \phi_\gamma)|_{V_{\lambda_j}}$ is a homomorphism whose image has  compact  closure in 
       $GL(V_{\lambda_j})$.  It follows that there is an inner product   $\left<\cdot ,\cdot \right>_j$  on $V_{\lambda_j}$  such that  each  $d(e^{-t_\gamma D}\circ \phi_\gamma)|_{V_{\lambda_j}}$
        is an isometry with respect to this inner  product. 
        Let $\left<\cdot , \cdot \right>$ be the inner product on $\mathfrak n$ that agrees with 
         $\left<\cdot ,\cdot \right>_j$  on $V_{\lambda_j}$  such that $V_{\lambda_i}$ and $V_{\lambda_j}$ are perpendicular to each other for $i\not=j$.  Let $d_1$ be a   $D$-homogeneous distance on $N$ associated to this inner product. 
           Although  $d(e^{-t_\gamma D}\circ \phi_\gamma)$ is a linear isometry of  $(\mathfrak n, \langle,\rangle)$, it is not clear that 
            $e^{-t_\gamma D}\circ \phi_\gamma$       
            is an isometry of $(N,d_1)$.  However, 
             $\{e^{-t_\gamma D}\circ \phi_\gamma| \gamma\in \Gamma\}$ is a subgroup of   the group $\text{Auto}_g(N)$ of graded automorphisms with compact closure (we denote the closure by $K$).  
             Let $m$ be a normalized Haar measure on $K$.  Define a new distance $d_2$ on $N$ by  $d_2(x,y)=\int_K d_1(k(x), k(y))dm(k)$.   Now it is easy to check that $d_2$ is a 
      $K$-invariant         $D$-homogeneous distance on $N$ associated to   $\langle,\rangle$. It follows that 
     $\Gamma$ acts on $(N,d_2)$ by similarities.    
      Finally we    
      conjugate $\Gamma$ into 
       $\text{Sim}(N, d_0)$ where $d_0$ is a fixed  maximally symmetric $D$-homogeneous distance on $N$.

We have finished the proof of Theorem \ref{main-uniform} in the case when  $\dim(W)\ge 2$, $\dim(N/W)\ge 2$.


       \section{Case $\text{dim}(W)=1$}\label{dim(w)=1}

         In this section we prove Theorem \ref{main-uniform} in  the case when              
          $\text{dim}(W)=1$,   $\dim(N/W)\ge 2$  and $\Gamma$ is  amenable.   In this case  Tukia's arguments can not be used to 
         prove a   fiber Tukia theorem.     Nonetheless  Day's fixed point theorem once again can be used to 
          ``straighten''   the action   along the cosets of $W$.   
        We point out that the argument in this section is   valid for uniform quasisimilarity groups $\Gamma$  of product metric spaces of the form $\mathbb R\times Y$,   see   Theorem \ref{rtimesY}.  The only properties used in the proof  below  are that 
         $H$ is a proper metric space, 
         the action of     $\Gamma$  on $\mathbb R\times H$ permutes  the subsets  $\{\mathbb R\times \{h\}: h\in H\}$  and the induced action on $H$ is by similarities.

         Throughout this section 
         we assume  $\text{dim}(W)=1$    and $\dim(N/W)\ge 2$. 
      Let $\mathfrak h=\oplus_{\lambda>1}V_\lambda$.  
        Since $W=V_1$ is an ideal of $\mathfrak n$,   
           it follows from  the property $[V_a, V_b]\subset V_{a+b}$    that 
         $\mathfrak n=W\oplus \mathfrak h$ is a direct sum of  ideals where $W\simeq \R$.  So in this case, our group $N$ is a direct product of  a copy of $\mathbb R$ and a Carnot group which we also refer to as $W$ and $H$, where $H$ is the simply connected Lie group with Lie algebra $\mathfrak h$.  
         Because of this we can write $w*h \in N$ as $(w,h)\in W\oplus H$ and our group $\Gamma$ acts on $N$   by maps of the form
       $$\gamma(w,h)= ( \gamma^{\mathbb R}(w,h), \gamma^{H}(h))$$
       where $\gamma^H:H \to H$ is 
         biLipschitz  and for each $h\in H$,    the map 
         $\gamma^{\mathbb R}(\cdot, h): \R \to \R$ is   also biLipschitz.  
The proof we give here follows the proof from Section 3.3 in \cite{D10} but we construct the conjugating map in a slightly different manner in order to fix an oversight in the original paper; namely it is unclear that the conjugating map in \cite{D10} is actually biLipschitz. To construct our map we need to use that $\Gamma$ is   amenable.

         Since $\dim(N/W)\ge 2$,
         we   can first apply Tukia-type theorem for Carnot groups (Theorem \ref{tukia}) to the induced action of $\Gamma$  on $N/W \simeq H$.
            So there is a biLipschitz map $f_0$ of $H$ such that   after   conjugation by $f_0$,  
                 the induced action of $\Gamma$  on  $H$  is by similarities. Set     $F_0=(\text{Id}, f_0): W\oplus H\ra W\oplus H$. Then 
         we    can  
        conjugate      the action of $\Gamma$ on $N$   by $F_0$  to get  an action where $\gamma^{\mathbb R}(\cdot,h)$ is still biLipschitz and $\gamma^H$ is a similarity  of $(H, \bar d_{CC})$.
          Let   $t_\gamma\in \mathbb R$  be such that $e^{\alpha t_\gamma}$ is the similarity constant of $\gamma^H: (H, \bar d_{CC})\ra  (H, \bar d_{CC})$   
          and let $ \tilde \gamma: W\times H \ra W$ be the map given by 
           $\tilde \gamma(w, h)=e^{-t_\gamma} \gamma^{\mathbb R}(w,h)$.   
            Then $\gamma^{\mathbb R}(w,h)=e^{t_\gamma} \tilde \gamma(w, h)$.  
          Since $\Gamma$ is a uniform quasisimilarity group, there is a constant $\Lambda\ge 1$ such that $\tilde \gamma(\cdot,h)$ is $\Lambda$-biLipschitz for all $\gamma\in \Gamma$ and all $h\in H$.  
         After taking an index two subgroup if necessary, we may assume 
         $\tilde \gamma(\cdot,h)$ is orientation-preserving and so has derivatives in the interval 
         $[1/\Lambda, \Lambda]$.  For each $\gamma\in \Gamma$ we define a function
          $u_\gamma: N\rightarrow \mathbb R$ by 
          $u_\gamma(w,h)=\frac{\partial \tilde\gamma}{\partial w}(w,h)$ when   $\frac{\partial \tilde\gamma}{\partial w}(w,h)$  exists and $u_\gamma(w,h)=1$ otherwise.
            Here  $\frac{\partial \tilde\gamma}{\partial w}(w,h)$ denotes 
            the derivative of the function $\tilde \gamma(\cdot, h):  \mathbb R\ra \mathbb R$ at the point $w$ if it exists. 
               Then 
           $u_\gamma\in L^\infty(N)$   
            with values in 
       $[1/\Lambda, \Lambda]$.   In this section we are using the Hausdorff measure on $N$ including in the definition of $L^p$ spaces.  Of course, the Hausdorff measure on $N$  is a  Haar measure.  The point is that the argument in this section  still works when $N=\mathbb R\times H$ is replaced with a product  metric space $\mathbb R\times Y$.

       Let   $L^\infty(N)=(L^1(N))^*$  be equipped with weak$^*$ topology.
         Then $L^\infty(N)$
           is a locally convex topological vector space (see  Section 3.14, \cite{R91}).   
        We also consider the $L^\infty$  norm $||\cdot||$ on  $L^\infty(N)$.  We stress that the topology induced by the   $L^\infty$  norm  is different from the 
       weak$^*$     topology.    In the following,  when we say a subset $X\subset L^\infty(N)$ is closed (compact)  we mean it is closed (compact) in the  weak$^*$  topology;  similarly for closure of subsets  and continuity of maps; 
         when we say  $X$ is bounded we mean it is bounded with respect to the  $L^\infty$  norm.  
            By the Banach-Alaoglu theorem,     bounded closed subsets of $L^\infty(N)$ are   compact.

       Next we define an action of the opposite group $\Gamma^*$ of $\Gamma$ on $L^\infty(N)$.   For $\gamma \in \Gamma$ and $\phi\in L^\infty(N)$, define $\gamma\cdot \phi\in L^\infty(N)$ by  $\gamma\cdot \phi=u_\gamma (\phi\circ \gamma)$,   that is,  
         $$(\gamma\cdot \phi) (w,h)=u_\gamma(w,h)  \phi(\gamma(w,h))=u_\gamma(w,h) \phi(\gamma^{\mathbb R}(w,h),   \gamma^H(h)).$$
        One checks  that this defines a linear action (in particular an affine action) 
         of $\Gamma^*$ on $L^\infty(N)$.

       { \begin{lemma}
          The map  $\Gamma\times L^\infty(N)\rightarrow L^\infty(N), (\gamma, \phi)\mapsto \gamma\cdot \phi$,    is separately continuous.  
         
         \end{lemma}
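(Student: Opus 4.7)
Separate continuity has two components: continuity in $\gamma$ for fixed $\phi$, and weak-$*$ continuity in $\phi$ for fixed $\gamma$. Since $\Gamma$ carries no topology here other than the discrete one, the first is automatic, so the real content is to show that for each fixed $\gamma \in \Gamma$ the linear map $T_\gamma : L^\infty(N) \to L^\infty(N)$ given by $\phi \mapsto u_\gamma(\phi\circ\gamma)$ is weak-$*$ continuous.

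The standard way to verify weak-$*$ continuity of a linear endomorphism of a dual Banach space is to exhibit it as the Banach adjoint of a bounded operator on the predual. Accordingly, I would produce an operator $S_\gamma : L^1(N) \to L^1(N)$ with $\langle T_\gamma\phi, f\rangle = \langle \phi, S_\gamma f\rangle$ for every $\phi \in L^\infty(N)$ and $f \in L^1(N)$. A formal change of variables $y = \gamma(x)$ in the integral $\int u_\gamma(x)\,\phi(\gamma(x))\,f(x)\,d\mu(x)$ suggests the candidate
$$(S_\gamma f)(y) = \frac{u_\gamma(\gamma^{-1}(y))}{J_\gamma(\gamma^{-1}(y))}\,f(\gamma^{-1}(y)),$$
where $J_\gamma$ is the Jacobian of $\gamma$ with respect to the Hausdorff measure $\mu$ on $N = \mathbb R \times H$.

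Because $\gamma$ is fiber-preserving, the Jacobian factors cleanly: $\gamma^H$ is a similarity of $H$ and so scales Hausdorff measure on $H$ by a constant $c_\gamma^H$, while the fiber map $\gamma^{\mathbb R}(\cdot,h) = e^{t_\gamma}\tilde\gamma(\cdot,h)$ is bi-Lipschitz on $\mathbb R$ with a.e.\ derivative $e^{t_\gamma}u_\gamma(w,h)$. Fubini then yields $J_\gamma(w,h) = e^{t_\gamma} c_\gamma^H u_\gamma(w,h)$ almost everywhere, so that $u_\gamma/J_\gamma$ is the positive constant $(e^{t_\gamma} c_\gamma^H)^{-1}$ and $S_\gamma f$ reduces, up to a scalar, to $f \circ \gamma^{-1}$. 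A second change of variables gives $\|S_\gamma f\|_1 = \int |f|\,u_\gamma\,d\mu \le \Lambda \|f\|_1$ and simultaneously verifies the adjoint identity, from which weak-$*$ continuity of $T_\gamma$ follows.

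The main technicality is justifying the change of variables: one needs $\gamma$ to satisfy Lusin's $(N)$ property and to admit the claimed Jacobian almost everywhere. For bi-Lipschitz fiber-preserving maps of $\mathbb R \times H$ both are standard, using Rademacher fiber by fiber on the $\mathbb R$-coordinate together with Fubini and the fact that $\gamma^H$ is a similarity. Once this is in place the rest of the argument is a purely formal dualization.
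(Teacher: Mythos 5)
There is a genuine gap, and it stems from a misreading of what "continuous in $\gamma$" means here. You assert that $\Gamma$ carries no topology other than the discrete one, so that continuity in the $\gamma$-variable is automatic, and you devote all your effort to the $\phi$-variable. But $\Gamma$ is a uniform quasisimilarity group acting on $N$, and throughout the section it is treated as a \emph{topological} group with the topology of uniform convergence on compact subsets of $N$: the later parts of the argument introduce a countable \emph{dense} subgroup $\Gamma_0\subset\Gamma$ and pass to limits $\gamma_j\to\gamma$, neither of which is meaningful for a discrete group. Moreover, Day's fixed point theorem for a topological group $\Gamma$ acting affinely on a compact convex set requires exactly the separate continuity $\gamma\mapsto\gamma\cdot\phi$, so this direction cannot be dismissed. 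The paper's proof in fact regards your direction — weak-$*$ continuity of $\phi\mapsto\gamma\cdot\phi$ for fixed $\gamma$ — as the easy one, disposed of in a single sentence, and spends the entire proof on the direction you skipped.

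Your treatment of the $\phi$-direction is correct and, if anything, more careful than the paper's one-line remark: realizing $T_\gamma$ as the Banach adjoint of the bounded operator $S_\gamma f = e^{-t_\gamma d_0}\, f\circ\gamma^{-1}$ on $L^1(N)$ via the area formula is the right way to see weak-$*$ continuity. It is worth noting that the identity you derive,
$$\int_N f\,(\gamma\cdot\phi)\,dm \;=\; e^{-t_\gamma d_0}\int_N (f\circ\gamma^{-1})\,\phi\,dm,$$
is precisely the computation the paper performs — but for the \emph{other} direction. To complete the lemma you would need to take a net (or sequence) $\gamma_j\to\gamma$ uniformly on compacta, test against $f\in C_c(N)$ (dense in $L^1$), and use the above identity to reduce the claim $\int f(\gamma_j\cdot\phi)\,dm \to \int f(\gamma\cdot\phi)\,dm$ to the convergence of $e^{-t_{\gamma_j}d_0}\to e^{-t_\gamma d_0}$ together with $\int (f\circ\gamma_j^{-1} - f\circ\gamma^{-1})\phi\,dm\to 0$; the latter follows because $f$ has compact support and $\gamma_j^{-1}\to\gamma^{-1}$ uniformly on compacta, so the integrands are eventually supported in a fixed compact set and converge uniformly there. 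This is the content you are missing.
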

         
         \begin{proof}
         It is easy to see that for fixed $\gamma\in \Gamma$, the map 
          $L^\infty(N)\rightarrow L^\infty(N),  \phi\mapsto \gamma\cdot \phi$,    is  continuous.   We next show that for fixed $\phi\in  L^\infty(N)$, the map 
           $\Gamma\rightarrow L^\infty(N),  \gamma\mapsto \gamma\cdot \phi$,    is   also   continuous.     Let $\phi\in L^\infty(N)$ be fixed and $\gamma_j, \gamma\in \Gamma$ be such that $\gamma_j\rightarrow \gamma$. 
          We need to show  
          $\gamma_j\cdot \phi \stackrel{w^*}\longrightarrow \gamma\cdot \phi$. 
         Let $C_c(N)$ be the space of compactly supported continuous functions  on $N$.
          Since $C_c(N)$ is a dense subspace of $L^1(N)$, it suffices to show that for any fixed $f\in C_c(N)$, 
            $\int_N f(\gamma_j\cdot \phi)dm \rightarrow \int_N f(\gamma\cdot \phi)dm$ as $j\rightarrow   \infty$,  where $m$ denotes the  Hausdorff    measure on $N$.            Notice that the Jacobian of $\gamma\in \Gamma$ at a point 
          $(w, h)$ is given by 
        $J\gamma(w,h)=e^{{t_\gamma}d_0}u_\gamma(w,h)$, where $d_0$ is the   Hausdorff dimension of $(N, d)$.
         The area formula applied to the map $\gamma$ and the function $f(\phi\circ\gamma)$   yields
          $$\int_N f(\phi\circ \gamma) J\gamma dm=\int_N (f\circ \gamma^{-1})\phi dm.$$
           It follows that
            $$\int_N f(\gamma\cdot \phi)dm=\int_N fu_\gamma(\phi\circ \gamma)dm=
           \frac{1}{e^{{t_\gamma}d_0}}   
           \int_N f(\phi\circ \gamma) J\gamma dm=
             \frac{1}{e^{{t_\gamma}d_0}}   
             \int_N (f\circ \gamma^{-1})\phi dm.$$
             Similarly we have 
             $$ \int_N f(\gamma_j\cdot \phi)dm= 
             \frac{1}{e^{{t_{\gamma_j}}d_0}}   
             \int_N (f\circ \gamma_j^{-1})\phi dm.$$
             Now
           \begin{align*}
           & \int_N f(\gamma_j\cdot \phi)dm-\int_N f(\gamma\cdot \phi)dm\\
           &=
             \frac{1}{e^{{t_{\gamma_j}}d_0}}   
            \int_N(f\circ \gamma_j^{-1}-f\circ \gamma^{-1})\phi dm+
         \big(\frac{1}{e^{{t_{\gamma_j}}d_0}}-\frac{1}{e^{{t_{\gamma}}d_0}}\big)
            \int_N(f\circ \gamma^{-1})\phi dm.    
            \end{align*}
       As $t_{\gamma_j}\ra t_\gamma$, the second term above clearly goes to $0$ as $j\ra \infty$.    Since   $f$ is continuous with compact support and $\gamma_j$ converges to $\gamma$ uniformly on compact subsets of  $N$, there is a compact subset $F\subset N$ and a quantity $\epsilon_j\ra 0$ such that  
         $\sup\{|f\circ \gamma_j^{-1}(n)-f\circ \gamma^{-1}(n)|:  n\in F\}<\epsilon_j$   and 
       $f\circ \gamma_j^{-1}(n)-f\circ \gamma^{-1}(n)=0$ for all $n\in N\backslash F$ and all sufficiently large $j$.      We have 
        $$|\int_N(f\circ \gamma_j^{-1}-f\circ \gamma^{-1})\phi dm|\le
         \int_F |f\circ \gamma_j^{-1}-f\circ \gamma^{-1}| |\phi| dm \le \epsilon_j \int_F |\phi| dm\ra 0$$ 
          and   hence 
           $\int_N f(\gamma_j\cdot \phi)dm\ra \int_N f(\gamma\cdot \phi)dm$.

         \end{proof}
         
         }


        Since  $u_\gamma$      takes values in the interval $[1/\Lambda, \Lambda]$, we see that $|| \phi||/\Lambda  \le ||\gamma\cdot \phi ||\le \Lambda ||\phi||$ for any $\phi\in L^\infty(N)$  and any $\gamma\in \Gamma$.   It follows that every $\Gamma$ orbit is a bounded subset of $L^\infty(N)$.  Let $\phi_0$ be the constant function $1$ on $N$ and $K$ be the closure of the convex hull of $\Gamma^*\cdot\phi_0$. Then $K$ is a compact, convex subset in $L^\infty(N)$.  Since $\Gamma$ is amenable,    
        Day's fixed point theorem  implies that $\Gamma$ has a fixed point $u$  in $K$.

       Notice that $\gamma\cdot \phi_0=u_\gamma$ and so is a  measurable function  with values in $[1/\Lambda, \Lambda]$.  It follows that every element in $K$, in particular $u$,  takes  values in $[1/\Lambda, \Lambda]$  a.e. By Fubini's theorem, for a.e. $h\in H$, 
          the map $\mathbb R\ra \mathbb R, w\mapsto u(w,h)$, is measurable and takes values in $[1/\Lambda, \Lambda]$  for a.e. $w\in W$.   
       
        Since $u$ is a fixed point of $\Gamma$,   for each $\gamma\in \Gamma$, we have 
 $u(w,h)= 
         u_\gamma(w,h) u(\gamma^{\mathbb R}(w,h),   \gamma^H(h))$  
         for  a.e. $(w,h)\in W\times H$. 
       By Fubini's theorem, for a.e. $h\in H$, 
          $u(w,h)= 
         u_\gamma(w,h) u(\gamma^{\mathbb R}(w,h),   \gamma^H(h))$  
            for a.e. $w\in W$.

     {

       Let  $\Gamma_0\subset \Gamma$ be  a countable dense  subgroup.    There is a  $\Gamma_0$-invariant full measure subset $U\subset H$  with the following properties:\newline
        (1)  for each $h\in U$ and each $\gamma\in \Gamma_0$,  the equality   
        $u(w,h)= 
         u_\gamma(w,h) u(\gamma^{\mathbb R}(w,h),   \gamma^H(h))$  
          holds  for a.e. $w\in W$. \newline
          (2) for   each  $h\in U$, 
          the map $\mathbb R\ra \mathbb R, w\mapsto u(w,h)$, is measurable and takes values in $[1/\Lambda, \Lambda]$  for a.e. $w\in W$.

       For each $h\in U$ we define a  function $v_h: \mathbb R\rightarrow \mathbb R$ by 
        $v_h(w)=\int_0^w u(s, h)ds$.   Clearly  $v_h$ is  $\Lambda$-biLipschitz. We also define $G_0: W\times U\rightarrow  W\times U$ by   $G_0(w,h)=(v_h(w), h)$.   We shall show that\newline
        (1)   $G_0$ is biLipschitz (Lemma \ref{G_0bilip})   
        and so admits a biLipschitz extension $\bar G_0:  N\ra N$; \newline
         (2) 
         $\bar G_0\Gamma \bar G_0^{-1}$  acts by similarities along the cosets of $W$ (Lemma \ref{simialongW}).

       \begin{lemma}\label{v_h}
       The function $v_h$ satisfies the following equation for all $\gamma\in \Gamma_0$ and all $w_1, w_2\in W$:
       $$
         v_{ \gamma^H(h)}(\gamma^{\mathbb R}(w_1,h ))-
         v_{\gamma^H(h)}(\gamma^{\mathbb R}(w_2, h))
         =e^{t_\gamma}(v_h(w_1)-v_h(w_2)).$$
       
       \end{lemma}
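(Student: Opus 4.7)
The plan is to \emph{integrate} the pointwise fixed-point equation
$u(w,h)=u_\gamma(w,h)\,u(\gamma^{\mathbb R}(w,h),\gamma^H(h))$
(which holds for a.e.\ $w\in W$ by property (1) of the set $U$) along the $W$ direction from $w_2$ to $w_1$, and then perform a change of variables on the right. This directly converts the multiplicative cocycle relation for $u$ into the affine relation for $v_h$ claimed in the lemma.

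First, I would fix $h\in U$ and $\gamma\in\Gamma_0$. Integrating the fixed-point equation from $w_2$ to $w_1$ gives, by the definition $v_h(w)=\int_0^w u(s,h)\,ds$,
\[
v_h(w_1)-v_h(w_2)=\int_{w_2}^{w_1} u_\gamma(w,h)\,u(\gamma^{\mathbb R}(w,h),\gamma^H(h))\,dw.
\]
Now the key identity is that, by the very definitions of $\tilde\gamma$ and $u_\gamma$,
\[
u_\gamma(w,h)=\tfrac{\partial\tilde\gamma}{\partial w}(w,h)=e^{-t_\gamma}\tfrac{\partial\gamma^{\mathbb R}}{\partial w}(w,h)
\]
at a.e.\ $w$. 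Because $\gamma^{\mathbb R}(\cdot,h)$ is $(e^{t_\gamma}\Lambda)$-biLipschitz in $w$, the one-dimensional change of variables formula (equivalently, the area formula applied to the $1$-Lipschitz-equivalent map $\gamma^{\mathbb R}(\cdot,h)$) applies to the substitution $s=\gamma^{\mathbb R}(w,h)$. This yields
\[
\int_{w_2}^{w_1}\!\tfrac{\partial\gamma^{\mathbb R}}{\partial w}(w,h)\,u(\gamma^{\mathbb R}(w,h),\gamma^H(h))\,dw
=\int_{\gamma^{\mathbb R}(w_2,h)}^{\gamma^{\mathbb R}(w_1,h)}\!u(s,\gamma^H(h))\,ds,
\]
and the right hand side equals $v_{\gamma^H(h)}(\gamma^{\mathbb R}(w_1,h))-v_{\gamma^H(h)}(\gamma^{\mathbb R}(w_2,h))$ by the definition of $v_{\gamma^H(h)}$. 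Combining, we get
\[
v_h(w_1)-v_h(w_2)=e^{-t_\gamma}\bigl(v_{\gamma^H(h)}(\gamma^{\mathbb R}(w_1,h))-v_{\gamma^H(h)}(\gamma^{\mathbb R}(w_2,h))\bigr),
\]
and multiplying through by $e^{t_\gamma}$ gives the stated identity.

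The main subtlety is a measure-theoretic bookkeeping issue: the fixed-point equation holds only for a.e.\ $w$, whereas the conclusion is asserted for \emph{every} $w_1,w_2\in W$. This is handled by the fact that both sides of the conclusion are continuous functions of $(w_1,w_2)$: the right-hand side is an affine combination of the values of the biLipschitz function $v_h$, and the left-hand side is a difference of values of $v_{\gamma^H(h)}$ composed with the continuous map $\gamma^{\mathbb R}(\cdot,h)$. Thus the identity, once established for a.e.\ pair $(w_1,w_2)$ via integration of the a.e.\ pointwise equation, extends to all $(w_1,w_2)$. The other point to check is that the change of variables above is legitimate, which follows because $\gamma^{\mathbb R}(\cdot,h)$ is biLipschitz (hence absolutely continuous with a.e.\ positive derivative after passing to the orientation-preserving subgroup), so the standard one-dimensional substitution rule applies without any Sard-type hypotheses.
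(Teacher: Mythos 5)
Your proof is correct and takes essentially the same approach as the paper's: both rest on integrating the a.e.\ fixed-point identity $u(w,h)=u_\gamma(w,h)\,u(\gamma^{\mathbb R}(w,h),\gamma^H(h))$ and a change of variables via the biLipschitz map $\gamma^{\mathbb R}(\cdot,h)$, using $u_\gamma=\partial\tilde\gamma/\partial w=e^{-t_\gamma}\,\partial\gamma^{\mathbb R}/\partial w$; you simply run the computation from $v_h(w_1)-v_h(w_2)$ toward the left-hand side, whereas the paper starts from $v_{\gamma^H(h)}(\gamma^{\mathbb R}(w_1,h))-v_{\gamma^H(h)}(\gamma^{\mathbb R}(w_2,h))$. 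Your closing remark about extending from ``a.e.\ $(w_1,w_2)$'' by continuity is unnecessary --- once the integrands agree for a.e.\ $w$, the integrals agree for \emph{every} choice of endpoints automatically --- but this does not affect the validity of the argument.
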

       
       \begin{proof}   For each $h\in U$,  $u_\gamma(w,h)=\frac{\partial \tilde \gamma}{\partial w}(w,h)$  
             for a.e. $w\in W$.   
        On the other hand,  
       by the choice of $U$,   for each $h\in U$ and each $\gamma\in \Gamma_0$,  the equality  
         $u(w,h)= 
         u_\gamma(w,h) u(\gamma^{\mathbb R}(w,h),   \gamma^H(h))$  
           holds  for a.e. $w\in W$.  
        Now we have
     %
        \begin{align*}
       v_{ \gamma^H(h)}(\gamma^{\mathbb R}(w_1,h ))-
        v_{\gamma^H(h)}(\gamma^{\mathbb R}(w_2, h))
         &=\int_{\gamma^{\mathbb R}(w_2,h)}^{\gamma^{\mathbb R}(w_1, h)} u(s,   \gamma^H(h))ds\\
         &=e^{t_\gamma} \int_{w_2}^{w_1} u(\gamma^{\mathbb R}(t, h),   
         \gamma^H(h)) \frac{\partial \tilde \gamma}{\partial t}(t,h)  dt\\
         &=e^{t_\gamma}\int_{w_2}^{w_1} u(t,h) dt
         =e^{t_\gamma}(v_h(w_1)-v_h(w_2)).
         \end{align*}   
       \end{proof}

       
         Let $C_b(\Gamma)$ be the space of bounded continuous functions on $\Gamma$.  
       Let $M_\Gamma$ be  the set of means on $C_b(\Gamma)$ of the form 
       $$\sum_{j=1}^n t_j\delta_{\gamma_j}\;\;\;\;\; (n\in \mathbb N, \gamma_1, \cdots, \gamma_n\in \Gamma, \; t_1,\cdots, t_n\ge 0,\; t_1+\cdots +t_n=1),$$
        where  for $\gamma\in \Gamma$, $\delta_\gamma: C_b(\Gamma)\ra \mathbb C$ is the linear functional given by $\delta_\gamma(f)=f(\gamma)$.    Then $M_\Gamma$ is $w^*$-dense in the set of  all means on $C_b(\Gamma)$, see  \cite{R02},  page 29.   Let  $\mathbb Q M_{\Gamma_0}$  
       be  the set of means on $C_b(\Gamma)$ of the form 
       $$\sum_{j=1}^n t_j\delta_{\gamma_j}\;\;\;\;\; (n\in \mathbb N, \gamma_1, \cdots, \gamma_n\in \Gamma_0, \; t_1,\cdots, t_n\ge 0,\;  t_j\in \mathbb Q,\; t_1+\cdots +t_n=1).$$  
       Since $\Gamma_0$ is dense in $\Gamma$ and $\mathbb Q$ is dense in $\mathbb R$, we see that   $\mathbb Q M_{\Gamma_0} $ is   also $w^*$-dense in the set of  all means on $C_b(\Gamma)$.  Let $K_0$ be the set of points in $K$ of the form
 $$       \sum_{j=1}^n t_j u_{\gamma_j}\;\;\;\;\; (n\in \mathbb N, \gamma_1, \cdots, \gamma_n\in \Gamma_0, \; t_1,\cdots, t_n\ge 0,\;  t_j\in \mathbb Q,\; t_1+\cdots +t_n=1).$$  
       
        \begin{lemma}\label{p-con}  There is a   sequence  $\{u_i\}$ in $K_0$ 
       that  converges to $u$ in the
        weak$^*$  topology.     

       \end{lemma}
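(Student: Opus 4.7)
The plan is to show that $K_0$ is weak$^*$-dense in $K$ and then extract an actual sequence using metrizability of the weak$^*$ topology on bounded subsets of $L^\infty(N)$. For the metrizability step, I would observe that $N$ is a separable metric space and the Hausdorff measure on it is $\sigma$-finite, so $L^1(N)$ is separable. Every element of $K$ takes values in $[1/\Lambda,\Lambda]$ almost everywhere, so $K$ is contained in the closed $L^\infty$-ball of radius $\Lambda$. By the standard fact that the weak$^*$ topology on a norm-bounded subset of the dual of a separable Banach space is metrizable, $K$ is weak$^*$-metrizable. It therefore suffices to show that $u$ lies in the weak$^*$ closure of $K_0$.

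To prove this density I would start from the fact that $K$ is the weak$^*$ closure of the convex hull of $\Gamma^*\cdot\phi_0=\{u_\gamma:\gamma\in\Gamma\}$, so it is enough to approximate any finite convex combination $v=\sum_{j=1}^{n}t_j u_{\gamma_j}$ (with $\gamma_j\in\Gamma$, $t_j\ge 0$, $\sum_j t_j=1$) by elements of $K_0$. The idea is a double approximation: pick $\gamma_j^{(m)}\in\Gamma_0$ with $\gamma_j^{(m)}\to\gamma_j$ in $\Gamma$, and rationals $t_j^{(m)}\ge 0$ summing to $1$ with $t_j^{(m)}\to t_j$, and set $v_m=\sum_j t_j^{(m)}u_{\gamma_j^{(m)}}\in K_0$. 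For each $f\in L^1(N)$, the pairing $\int_N f v_m\,dm$ is a finite sum of products $t_j^{(m)}\int_N f u_{\gamma_j^{(m)}}\,dm$; the first factor converges to $t_j$, and the second converges to $\int_N f u_{\gamma_j}\,dm$ by the separate continuity of $\gamma\mapsto\gamma\cdot\phi_0=u_\gamma$ established in the previous lemma. Hence $v_m\to v$ weak$^*$, and therefore $K_0$ is weak$^*$-dense in $K$.

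Combining these two steps, $u$ lies in the weak$^*$-metrizable set $K$ and in the weak$^*$ closure of $K_0\subset K$, so a standard diagonal extraction yields a sequence $\{u_i\}\subset K_0$ with $u_i\to u$ weak$^*$. I do not anticipate any deep obstacle: the only step requiring a moment of thought is the metrizability ingredient, which rests on separability of $L^1(N)$; once that is in hand, the rest is a routine combination of density of $\Gamma_0$ in $\Gamma$, density of $\mathbb Q$ in $\mathbb R$, and the weak$^*$-continuity of $\gamma\mapsto u_\gamma$.
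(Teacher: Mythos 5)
Your argument is correct, but it takes a genuinely different route from the paper's. The paper does not prove that $K_0$ is weak$^*$-dense in $K$; instead it re-runs the proof of Day's fixed point theorem from \cite{R02}, choosing the approximating net of means to lie in $\mathbb{Q}M_{\Gamma_0}$, so that the corresponding net $u_\beta=\sum_j t_j u_{\gamma_j}$ already lies in $K_0$ and sub-converges to a fixed point $u$; the metrizability of $K$ is then used only at the end to turn that sub-convergent net into a sequence. You bypass Day's proof entirely: you show directly that $K_0$ is weak$^*$-dense in $K$ by approximating an arbitrary finite convex combination $\sum_j t_j u_{\gamma_j}$ (with $\gamma_j\in\Gamma$, $t_j\ge 0$) by $\sum_j t_j^{(m)}u_{\gamma_j^{(m)}}$ with rational coefficients and $\gamma_j^{(m)}\in\Gamma_0$, and you invoke the weak$^*$-continuity of $\gamma\mapsto u_\gamma$ from the preceding lemma to pass to the limit in the pairing with any $f\in L^1(N)$. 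Once $K_0$ is dense in $K$, any point of $K$ --- in particular the Day fixed point $u$ --- is a sequential weak$^*$-limit of elements of $K_0$, by the same metrizability observation the paper makes (separability of $L^1(N)$, hence metrizability of norm-bounded sets in $L^\infty(N)$ with the weak$^*$ topology, and $K$ lies in the $\Lambda$-ball). Your route is a bit cleaner and slightly more general, since it applies to every element of $K$ rather than to a specific fixed point manufactured inside Day's proof; the paper's route has the advantage of making explicit which fixed point is obtained and how it arises from the invariant mean. Both ultimately rest on the same three ingredients: density of $\Gamma_0$ in $\Gamma$ and of $\mathbb{Q}$ in $\mathbb{R}$, the separate continuity lemma, and separability of $L^1(N)$.
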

       
       \begin{proof}
         This follows from the proof of Day's fixed point theorem, see page 29 of   \cite{R02}.
           The reader should have a copy of that proof in front of him/her while reading this proof. 
              In that proof, we pick $x_0=\phi_0$.   
               Let $A(K)$ be the set of all continuous affine functions on $K$.  For $\psi\in A(K)$, define 
              $$\phi_\psi: \Gamma \rightarrow \mathbb C, \;\; \gamma \mapsto \psi(\gamma\cdot \phi_0) =\psi(u_\gamma). $$

               Let $m$ be  a left invariant mean on   $C_b(\Gamma)$.   
           Since  $\mathbb Q M_{\Gamma_0} $ is    $w^*$-dense in the set of  all means on $C_b(\Gamma)$, there is  a   net   $\{m_\beta\}$ in 
          $\mathbb Q M_{\Gamma_0} $   that  converges to $m$ in the
        weak$^*$  topology.    
              For each $m_\beta=\sum_{j=1}^n t_j \delta_{\gamma_j}$, define   
            $u_\beta\in K_0$ by $u_\beta=\sum_{j=1}^n t_j  u_{\gamma_j}$.  
         One checks that  $<\phi_\psi, m_\beta>=\psi(u_\beta)$ for all $\psi\in A(K)$.   Since $K$ is compact,  $u_\beta$ sub-converges to some $u\in K$. It is proved on page 30 of \cite{R02}
            that this $u$ is a fixed point of $\Gamma$.   
              Finally, since $L^1(N)$ is separable,  by the sequential  Banach-Alaoglu theorem,  
                closed balls in $L^\infty(N)=(L^1(N))^*$ are metrizable.  So $K$ is compact and metrizable.   Therefore we can pick   a sequence  $\{u_i\}$ from the net  $\{u_\beta\}$ that   converges to $u$ in the
        weak$^*$  topology.

       \end{proof}

       }

        \begin{lemma}\label{v_h control}
       There  is a full measure subset $U'\subset U$  and    a  constant $C>0$ such that $|v_{h_1}(w)-v_{h_2}(w)|\le C \cdot 
       \bar d_{CC}(h_1, h_2)^{\frac{1}{\alpha}}$  for all $h_1, h_2\in U'$ and all $w\in W$.          
       \end{lemma}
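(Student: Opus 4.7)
The plan is to transfer a H\"older estimate from the approximating sequence $\{u_i\}\subset K_0$ given by Lemma \ref{p-con} to the fixed point $u$. Writing $u_i=\sum_j t_j u_{\gamma_j}$ with $\gamma_j\in\Gamma_0$, and using that each $\tilde\gamma_j(\cdot,h)$ is $\Lambda$-biLipschitz and hence absolutely continuous, the functions $v^{(i)}_h(w):=\int_0^w u_i(s,h)\,ds$ admit the closed form $\sum_j t_j\bigl[\tilde\gamma_j(w,h)-\tilde\gamma_j(0,h)\bigr]$.

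The heart of the argument is the pointwise estimate
$$\bigl|\tilde\gamma(w,h_1)-\tilde\gamma(w,h_2)\bigr|\le C_0\,\bar d_{CC}(h_1,h_2)^{1/\alpha}$$
for all $\gamma\in\Gamma$, $w\in W$, $h_1,h_2\in H$, with $C_0$ depending only on $N$ and $\Lambda$. To prove it I would first record that on any slice $\{w\}\times H\subset N$ one has $d_N((w,h_1),(w,h_2))\asymp\bar d_{CC}(h_1,h_2)^{1/\alpha}$, because the dilation of $\mathfrak n$ scales $d_N$ with degree $1$ while scaling $\bar d_{CC}$ on the $H$-slice with degree $\alpha$, and homogeneity forces this relationship. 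Since $\gamma$ is a uniform quasisimilarity of $(N,d_N)$ with factor $e^{t_\gamma}$, we get $d_N(\gamma(w,h_1),\gamma(w,h_2))\lesssim e^{t_\gamma}\bar d_{CC}(h_1,h_2)^{1/\alpha}$; projecting onto the first-layer ideal $W$ (a Lipschitz operation for $d_N$) and dividing by $e^{t_\gamma}$ yields the claim. Summing over $j$ with $\sum t_j=1$ and invoking the triangle inequality then produces
$$|v^{(i)}_{h_1}(w)-v^{(i)}_{h_2}(w)|\le 2C_0\,\bar d_{CC}(h_1,h_2)^{1/\alpha}$$
independent of $i$ and $w$.

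Combined with the bound $|v^{(i)}_h(w)|\le\Lambda|w|$, the family $\{v^{(i)}\}$ is equicontinuous and locally bounded on $W\times H$. Since $H$ is proper, Arzel\`a--Ascoli and a diagonal extraction yield a subsequence $\{v^{(i_k)}\}$ converging locally uniformly to a continuous function $V:W\times H\to\mathbb R$ that inherits the same Lipschitz-in-$w$ and $(1/\alpha)$-H\"older-in-$h$ bounds. To identify $V$ with $v_h(w)$, fix $\psi_1\in C_c(W)$ and $\psi_2\in C_c(H)$ and compute $\int_W\!\int_H v^{(i_k)}_h(w)\psi_1(w)\psi_2(h)\,dh\,dw$ in two ways: uniform convergence on the compact support gives the limit $\int_W\!\int_H V(w,h)\psi_1(w)\psi_2(h)\,dh\,dw$, while by Fubini the same integral is, for each $w$, a pairing of $u_{i_k}$ with the $L^1$ function $(s,h)\mapsto\chi_{[0,w]}(s)\psi_2(h)$, which by weak$^*$ convergence $u_{i_k}\to u$ together with dominated convergence in $w$ tends to $\int_W\!\int_H v_h(w)\psi_1(w)\psi_2(h)\,dh\,dw$. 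Thus $V(w,h)=v_h(w)$ a.e.; continuity in $w$ of both $V(\cdot,h)$ and $v_h(\cdot)$ (the latter for $h\in U$) upgrades this to equality for every $w\in W$ and every $h$ in a full-measure subset $U'\subset U$, and the lemma follows with $C=2C_0$. The main obstacle is the slice-metric comparison $d_N((w,h_1),(w,h_2))\asymp\bar d_{CC}(h_1,h_2)^{1/\alpha}$, which is precisely what produces the exponent $1/\alpha$ and requires carefully reconciling the grading on $\mathfrak n$ (which determines $d_N$) with the Carnot structure on $\mathfrak h$ (which defines $\bar d_{CC}$).
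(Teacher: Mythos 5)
Your proposal is correct and rests on the same key estimate as the paper's proof, namely that since $\gamma$ is a $(\Lambda,e^{t_\gamma})$-quasisimilarity of $(N,d)$ with $d((w_1,h_1),(w_2,h_2))=|w_1-w_2|+\bar d_{CC}(h_1,h_2)^{1/\alpha}$, one has $|\tilde\gamma(w,h_1)-\tilde\gamma(w,h_2)|\le\Lambda\,\bar d_{CC}(h_1,h_2)^{1/\alpha}$, together with the closed form $\int_0^w u_\gamma(s,h)\,ds=\tilde\gamma(w,h)-\tilde\gamma(0,h)$ coming from absolute continuity of $\tilde\gamma(\cdot,h)$. Where you diverge from the paper is in how the estimate is transported across the weak$^*$ limit $u_i\to u$. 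The paper restricts to the boxes $D_n=[-n,n]\times B_n$, upgrades weak$^*$ to weak $L^p(D_n)$ convergence, invokes Mazur's lemma to extract convex combinations $\tilde u_j$ converging in $L^p$ norm and hence a.e.\ along a subsequence, applies Fubini to get a null set $F_n\subset H$, and finally passes to the limit by dominated convergence. You instead observe that the Hölder bound makes the antiderivatives $v^{(i)}_h(w)=\int_0^w u_i(s,h)\,ds$ jointly equicontinuous and locally bounded on $W\times H$, extract a locally uniformly convergent subsequence by Arzelà--Ascoli, and identify the limit with $v_h(w)$ a.e.\ by pairing against $\psi_1(w)\psi_2(h)$ and using weak$^*$ convergence plus Fubini. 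Both routes are valid. Your approach sidesteps the $n$-dependence in the Mazur step: strictly speaking the paper's $\tilde u_j$ depends on $n$, and a diagonal argument is implicitly needed to produce a single sequence working for all $n$; your Arzelà--Ascoli extraction handles the exhaustion of $W\times H$ in one pass. On the other hand, your justification of the slice comparison $d_N((w,h_1),(w,h_2))\asymp\bar d_{CC}(h_1,h_2)^{1/\alpha}$ via scaling/homogeneity is more elaborate than necessary; the paper simply works from the outset with the explicit product quasi-distance, on the slice $\{w\}\times H$ giving exact equality, which avoids any need to argue about projections being Lipschitz. Both proofs produce the same constant up to normalization ($C=2\Lambda$), and the full measure set $U'$ arises in both as the set where a.e.\ identifications hold.
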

       
       \begin{proof} 
           We consider the following $D$-homogeneous  distance  $d$ on $N$:
         $$d((w_1, h_1), (w_2, h_2))=|w_1-w_2|+\bar d_{CC}(h_1, h_2)^{\frac{1}{\alpha}}.$$   
       Recall that each $\gamma\in \Gamma$ has the expression
           $\gamma(w,h)=(e^{t_\gamma}\tilde \gamma(w,h), \gamma^H(h))$   and 
            $\gamma^H: (H, \bar d_{CC}) \ra (H, \bar d_{CC}) $ is a  similarity with similarity constant $e^{\alpha t_\gamma}$.   Since $\Gamma$ is a uniform quasisimilarity group, by increasing $\Lambda$ if necessary we may assume each $\gamma$ is a 
              $(\Lambda, e^{t_\gamma})$ quasisimilarity of $(N,d)$.   
           It follows that  $|\tilde \gamma(w, h_1)-\tilde \gamma(w, h_2)|\le \Lambda \bar d_{CC}(h_1, h_2)^{\frac{1}{\alpha}}$ for all $h_1, h_2\in H$, all $w\in W$ and all $\gamma\in \Gamma$.  
       
       For each integer $n\ge 1$, let $B_n\subset H$ be  the ball with radius $n$ and center the origin (the identity element of $H$). Set $D_n=[-n,n]\times B_n\subset W\times H$.    
       Notice that $u_i|_{D_n}, u|_{D_n}\in L^\infty (D_n)$  and 
       that $u_i|_{D_n}\ra  u|_{D_n}$ in the weak$^*$ topology. Since $D_n$ has finite measure we see that  $u_i|_{D_n}\ra  u|_{D_n}$   weakly in $L^p(D_n)$ for any $1\le p<\infty$.   By Mazur's lemma, there is a sequence  $\tilde u_j$ of convex combinations of the $u_i's$ such that $\tilde u_j|_{D_n}$ converges to $u|_{D_n}$ in the $L^p$ norm.  
         Then   after taking a subsequence if necessary we may assume that 
        $\tilde u_j|_{D_n}$ converges to $u|_{D_n}$ a.e.    By Fubini, there is a null set $F_n\subset H$ such that for any $h\in B_n\backslash F_n$,   $\tilde u_j(w,h)\ra u(w,h)$ for a.e. 
        $w\in [-n,n]$.    On the other hand,  as  each $u_i$ is a convex combination of 
     the     $u_\gamma$'s,  each $\tilde u_j$ is also a convex combination of the  $u_\gamma$'s. Write $\tilde u_j=\sum_{i=1}^{k_j} t_{j,i}u_{\gamma_{j,i}}$ for some $t_{j,i}\ge 0$ with $\sum_i t_{j,i}=1$  and $\gamma_{j,i}\in \Gamma$. 
         Since each $u_\gamma$ is bounded above by $\Lambda$,  so is $\tilde u_j$.  It follows from dominated convergence theorem that 
        $\int_0^w\tilde u_j(s,h) ds\ra \int_0^w u(s,h) ds=v_h(w)$ for any $w\in [-n,n]$, and  any  $h\in B_n\backslash F_n$.   
       
       Set $U'=U-\cup_n F_n$. Then $U'$ has full measure in $H$.    Let $h_1, h_2\in U'$ and $w\in W$. Pick a sufficiently large  $n$ such that $(w,h_1), (w, h_2)\in D_n$.    
        Now 
\bea 
|\nu_{h_2}(w)-\nu_{h_1}(w)| 
&=&    \lim_{j \to \infty}  \left|  \int_0^w  \left( \tilde u_{j}(s,h_2) -  \tilde u_{j}(s,h_1)\right)ds\right|\\
&=&     \lim_{j \to \infty}  \left|  \sum_{i}  t_{j,i}\int_0^w(u_{\gamma_{j,i}}(s,h_2) -    u_{\gamma_{j,i}}(s,h_1))ds\right|\\
&=&     \lim_{j \to \infty}\left|  \sum_{i}  t_{j,i}
 ({\tilde{\gamma}_{j,i}}(w, h_2)-{\tilde{\gamma}_{j,i}}(0, h_2)-
 {\tilde{\gamma}_{j,i}}(w, h_1)+{\tilde{\gamma}_{j,i}}(0, h_1) )   
  \right|\\
&\le&  2\Lambda \bar d_{CC}^{\frac{1}{\alpha}}(h_1, h_2).  
\eea

       \end{proof}

        \begin{lemma}\label{G_0bilip}
       The map $G_0|_{W\times U'}:   (W\times U', d)\ra  (W\times U', d)$ is biLipschitz.

       \end{lemma}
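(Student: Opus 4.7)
The plan is to verify the biLipschitz property directly on the distance formula, exploiting that $G_0$ is the identity on the $H$-coordinate and that the $W$-coordinate transformation $v_h$ is controlled by the two previous lemmas: pointwise $\Lambda$-biLipschitz in $w$ for each fixed $h$, and $\frac{1}{\alpha}$-Hölder in $h$ (uniformly in $w$) on $U'$.

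First, I would unpack the definitions so that
\[
d\bigl(G_0(w_1,h_1),G_0(w_2,h_2)\bigr)=\bigl|v_{h_1}(w_1)-v_{h_2}(w_2)\bigr|+\bar d_{CC}(h_1,h_2)^{\frac{1}{\alpha}}.
\]
The upper bound is routine: by the triangle inequality,
\[
\bigl|v_{h_1}(w_1)-v_{h_2}(w_2)\bigr|\le\bigl|v_{h_1}(w_1)-v_{h_1}(w_2)\bigr|+\bigl|v_{h_1}(w_2)-v_{h_2}(w_2)\bigr|\le \Lambda|w_1-w_2|+C\,\bar d_{CC}(h_1,h_2)^{\frac{1}{\alpha}},
\]
where the first summand uses that $v_{h_1}$ is $\Lambda$-biLipschitz and the second uses Lemma \ref{v_h control}. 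Combining with the trivial $\bar d_{CC}(h_1,h_2)^{1/\alpha}$ term gives $d(G_0(w_1,h_1),G_0(w_2,h_2))\le\max(\Lambda,C+1)\,d((w_1,h_1),(w_2,h_2))$.

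The harder step, and the main obstacle, is the lower bound: the reverse triangle inequality only gives $|v_{h_1}(w_1)-v_{h_2}(w_2)|\ge \tfrac{1}{\Lambda}|w_1-w_2|-C\,\bar d_{CC}(h_1,h_2)^{\frac{1}{\alpha}}$, which may be negative when the second term dominates. I would handle this by a dichotomy depending on which term of $d((w_1,h_1),(w_2,h_2))$ is larger. In the regime $|w_1-w_2|\ge 2C\Lambda\,\bar d_{CC}(h_1,h_2)^{\frac{1}{\alpha}}$, the reverse triangle estimate yields $|v_{h_1}(w_1)-v_{h_2}(w_2)|\ge \tfrac{1}{2\Lambda}|w_1-w_2|$, and since in this case $d((w_1,h_1),(w_2,h_2))$ is comparable to $|w_1-w_2|$, we recover a linear lower bound. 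In the complementary regime $|w_1-w_2|<2C\Lambda\,\bar d_{CC}(h_1,h_2)^{\frac{1}{\alpha}}$, we have $d((w_1,h_1),(w_2,h_2))\le (2C\Lambda+1)\,\bar d_{CC}(h_1,h_2)^{\frac{1}{\alpha}}$, and the term $\bar d_{CC}(h_1,h_2)^{\frac{1}{\alpha}}$ sitting inside $d(G_0(w_1,h_1),G_0(w_2,h_2))$ suffices on its own to give the required lower bound.

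Putting the two cases together yields a uniform constant $c=c(\Lambda,C)>0$ with $d(G_0(w_1,h_1),G_0(w_2,h_2))\ge c\,d((w_1,h_1),(w_2,h_2))$ for all $(w_i,h_i)\in W\times U'$, which combined with the upper bound gives the biLipschitz conclusion. I expect no other technical difficulty, since we never need to invert $G_0$ explicitly; the biLipschitz inequality in both directions is enough, and density of $U'$ in $H$ is precisely what allows the extension $\bar G_0$ claimed immediately after the lemma.
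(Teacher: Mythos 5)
Your proof is correct and follows essentially the same route as the paper: the upper bound via the triangle inequality combined with the $\Lambda$-biLipschitz estimate on $v_h$ and the H\"older estimate of Lemma \ref{v_h control}, and the lower bound via a dichotomy on whether $|w_1-w_2|$ or $\bar d_{CC}(h_1,h_2)^{1/\alpha}$ dominates. The only difference is cosmetic: you keep the constant from Lemma \ref{v_h control} abstract as $C$, while the paper substitutes $C=2\Lambda$ and hence uses the threshold $4\Lambda^2$.
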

       
       \begin{proof}
       We first show $G_0|_{W\times U'}$   is Lipschitz.  
       Let $(w_1, h_1), (w_2, h_2)\in W\times U'$.  Then 
       \begin{align*}
       d(G_0(w_1, h_1), G_0(w_2, h_2))&
       =d((v_{h_1}(w_1), h_1),  (v_{h_2}(w_2), h_2))\\
       &= |v_{h_1}(w_1)-v_{h_2}(w_2)|+ \bar d_{CC}^{\frac{1}{\alpha}}(h_1, h_2)\\
       &\le |v_{h_1}(w_1)-v_{h_2}(w_1)|+ |v_{h_2}(w_2)-v_{h_2}(w_1)|+
       \bar d_{CC}^{\frac{1}{\alpha}}(h_1, h_2)\\
       &\le (2 \Lambda+1)\bar d_{CC}^{\frac{1}{\alpha}}(h_1, h_2)+\Lambda |w_1-w_2|\\   &\le (2 \Lambda+1) d((w_1, h_1), (w_2, h_2)).
       \end{align*}

       Next we show $G_0^{-1}|_{W\times U'}$ is also Lipschitz.  First assume $|w_1-w_2|\le   4 \Lambda^2  \bar d_{CC}^{\frac{1}{\alpha}}(h_1, h_2)$.   Then
    $$   d(G_0(w_1, h_1), G_0(w_2, h_2))
       \ge 
       \bar d_{CC}^{\frac{1}{\alpha}}(h_1, h_2)
       \ge \frac{1}{4\Lambda^2+1}  d((w_1, h_1), (w_2, h_2)).  $$
        Now we assume $|w_1-w_2|\ge   4 \Lambda^2  \bar d_{CC}^{\frac{1}{\alpha}}(h_1, h_2)$.  
         Then 
       \begin{align*}
       d(G_0(w_1, h_1), G_0(w_2, h_2))&
       = |v_{h_1}(w_1)-v_{h_2}(w_2)|+ \bar d_{CC}^{\frac{1}{\alpha}}(h_1, h_2)\\
       & \ge |v_{h_1}(w_1)-v_{h_1}(w_2)|-|v_{h_1}(w_2)-v_{h_2}(w_2)|+ \bar d_{CC}^{\frac{1}{\alpha}}(h_1, h_2)\\
       &\ge \frac{1}{\Lambda} |w_1-w_2|-2\Lambda \bar d_{CC}^{\frac{1}{\alpha}}(h_1, h_2)+\bar d_{CC}^{\frac{1}{\alpha}}(h_1, h_2)\\
       &\ge \frac{1}{2\Lambda} |w_1-w_2|+\bar d_{CC}^{\frac{1}{\alpha}}(h_1, h_2)\\
       &\ge \frac{1}{2\Lambda}d((w_1, h_1), (w_2, h_2)).
       \end{align*}

       \end{proof}

       Since $W\times U'$ is dense in $N$, Lemma \ref{G_0bilip}  implies 
       $G_0|_{W\times U'}$ extends to a biLipschitz map $\bar G_0: N\ra N$.

        \begin{lemma}\label{simialongW}
       For each  $\gamma\in \Gamma$,  $\bar G_0\circ \gamma\circ \bar G_0^{-1}$ acts by similarities  with  similarity constant $e^{t_\gamma}$  along the cosets of $W$.  
       
       \end{lemma}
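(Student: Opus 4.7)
The plan is to establish the statement first for $\gamma$ in the dense countable subgroup $\Gamma_0$ and $h$ in the full-measure set $U$, using Lemma \ref{v_h} directly, and then to extend twice by continuity: once in $h$ and once in $\gamma$.

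First, for $\gamma \in \Gamma_0$ and $h \in U$ (so $\gamma^H(h) \in U$ by the $\Gamma_0$-invariance of $U$), I would unpack the composition. Since $G_0(w, h) = (v_h(w), h)$, its inverse on $W \times \{h\}$ is $G_0^{-1}(a, h) = (v_h^{-1}(a), h)$, and
\[
\bar G_0 \circ \gamma \circ \bar G_0^{-1}(a, h) = \bigl(v_{\gamma^H(h)}(\gamma^{\mathbb R}(v_h^{-1}(a), h)),\; \gamma^H(h)\bigr).
\]
Substituting $w_i = v_h^{-1}(a_i)$ into Lemma \ref{v_h} yields
\[
v_{\gamma^H(h)}(\gamma^{\mathbb R}(v_h^{-1}(a_1), h)) - v_{\gamma^H(h)}(\gamma^{\mathbb R}(v_h^{-1}(a_2), h)) = e^{t_\gamma}(a_1 - a_2).
\]
Hence, for such $\gamma$ and $h$, the restriction of $\bar G_0 \circ \gamma \circ \bar G_0^{-1}$ to the coset $W \times \{h\}$ (which maps into $W \times \{\gamma^H(h)\}$) is affine in the $W$-variable with slope $e^{t_\gamma}$, i.e.\ a similarity with constant $e^{t_\gamma}$.

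Next, to extend to all $h \in H$ for $\gamma \in \Gamma_0$, I would fix $a_1, a_2 \in W$ and observe that the $W$-coordinate of $\bar G_0 \gamma \bar G_0^{-1}(a_i, h)$ is continuous in $h$ because $\bar G_0$, $\gamma$, and $\bar G_0^{-1}$ are all biLipschitz. Since the displayed identity above holds on the dense subset $U \subset H$, it persists for all $h \in H$. Combined with $\bar G_0 \gamma \bar G_0^{-1}$ being a biLipschitz homeomorphism of $N$ (so the restriction to the coset is a homeomorphism of $W$), this forces the restriction to be affine with slope $e^{t_\gamma}$, hence a similarity with constant $e^{t_\gamma}$, for every $h \in H$.

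Finally, for arbitrary $\gamma \in \Gamma$, I would approximate by $\gamma_n \in \Gamma_0$ with $\gamma_n \to \gamma$ uniformly on compact subsets of $N$. Then $\gamma_n^H \to \gamma^H$ uniformly on compact subsets of $H$, and since these maps are similarities with constants $e^{\alpha t_{\gamma_n}}$ and $e^{\alpha t_\gamma}$ respectively, we get $t_{\gamma_n} \to t_\gamma$. Passing to the limit in the identity from the previous paragraph (valid for all $h$) gives the analogous equality with $\gamma$ in place of $\gamma_n$, completing the proof. The main obstacle I anticipate is the bookkeeping around the two density steps: one must verify that continuity of $\bar G_0 \gamma \bar G_0^{-1}$ is strong enough to propagate the affine identity from $U$ to all of $H$, and that convergence of $\gamma_n \to \gamma$ genuinely controls both the $W$-coordinate and the scaling factor $t_\gamma$. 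Both hinge on $\bar G_0$ being biLipschitz (Lemma \ref{G_0bilip}), which globalizes the conjugate action in a way that makes these continuity arguments routine.
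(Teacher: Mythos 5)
Your approach matches the paper's proof almost step for step: compute the conjugate on the coset, apply Lemma \ref{v_h} to get the exact similarity constant $e^{t_\gamma}$, propagate from a dense set of $h$ to all $h$ using that $\bar G_0\gamma\bar G_0^{-1}$ is biLipschitz, then pass from $\Gamma_0$ to $\Gamma$ by approximation. One technical slip worth fixing: you restrict to $h\in U$ with $\gamma^H(h)\in U$ by $\Gamma_0$-invariance of $U$, but the identity $\bar G_0(w,h)=(v_h(w),h)$ is only guaranteed on $W\times U'$, since $\bar G_0$ is the extension of $G_0|_{W\times U'}$ and we have no continuity of $h\mapsto v_h$ on $U\setminus U'$ (Lemma \ref{v_h control} only controls $v_h$ on $U'$). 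Since $U'$ need not be $\Gamma_0$-invariant, you should instead take $h\in U'\cap (\gamma^H)^{-1}(U')$, which is still full measure in $H$; this is exactly what the paper does, and it is enough for your subsequent density-and-continuity argument to run unchanged.
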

       
       \begin{proof}
       Notice that $G_0^{-1}$ has the expression $G_0^{-1}(w,h)=(v_h^{-1}(w), h)$.  
        As above write $\gamma$ as  
         $\gamma(w,h)=(\gamma^{\mathbb R}(w, h),  \gamma^H(h))$.  It follows that 
         $G_0\circ \gamma\circ G_0^{-1}(w,h)=(v_{\gamma^H(h)}(\gamma^{\mathbb R}(v_h^{-1}(w), h)),  \gamma^H (h)). $
         Let $h\in U'\cap  (e^{t_\gamma\bar D}\circ \bar \gamma)^{-1}(U')$, $w_1, w_2\in W$.   First assume  
          $\gamma\in \Gamma_0$.  
         By Lemma \ref{v_h}  we have 
      %
        $$|v_{\gamma^H(h)}(\gamma^{\mathbb R}(v_h^{-1}(w_1), h))-v_{\gamma^H(h)}(\gamma^{\mathbb R}(v_h^{-1}(w_2), h))|=e^{t_\gamma}|v_h(v_h^{-1}(w_1))-v_h(v_h^{-1}(w_2))|
       =e^{t_\gamma}|w_1-w_2|. $$
         Since $U'\cap  (e^{t_\gamma\bar D}\circ \bar \gamma)^{-1}(U')$ has full measure
          in $H$,  we see that the map $\bar G_0\circ \gamma\circ \bar G_0^{-1}$ restricted to almost every coset of $W$ is a similarity  with  similarity constant $e^{t_\gamma}$. Since   $\bar G_0\circ \gamma\circ \bar G_0^{-1}$ is biLipschitz,  we see that 
         $\bar G_0\circ \gamma\circ \bar G_0^{-1}$ restricted to every coset of $W$ is a similarity  with  similarity constant $e^{t_\gamma}$.
   
     Now  consider a general $\gamma\in \Gamma$.  There is sequence $\gamma_j\in \Gamma_0$ that converges to $\gamma$ uniformly on compact subsets of $N$. 
        Since each   $\bar G_0\circ \gamma_j\circ \bar G_0^{-1}$ restricted to every coset of $W$ is a similarity  with  similarity constant $e^{t_{\gamma_j}}$,  a similar statement  is true for 
          $\bar G_0\circ \gamma\circ \bar G_0^{-1}$. 
              \end{proof}

       By Lemma \ref{simialongW},  
       after a  conjugation if necessary,    we may assume 
        $\tilde \gamma(\cdot,h)$ is a translation of $\mathbb R$  
        for each $\gamma\in \Gamma$  and $h\in H$.
        Now we can write $\gamma$ as
        $$\gamma(w,h)=\gamma(0,0)\cdot e^{t_\gamma D}(w+s_\gamma(h), Bh),$$
         where   $s_\gamma: H\rightarrow \mathbb R$   is  a  map  satisfying $s_\gamma(0)=0$,  and 
          $B:H\rightarrow H$ is a graded automorphism of $H$ that is also an isometry with respect to $\bar d_{CC}$.  
          Since $\{e^{-t_\gamma D}\circ \gamma|\gamma\in \Gamma\}$ is a family of uniform biLipschitz maps,  we see that 
            $\{s_\gamma|\gamma\in \Gamma\}$ is a bounded subset of 
           $$E_1=\{s: H\rightarrow \mathbb R \;\text{is}\; \textstyle{\frac{1}{\alpha}}\text{-Holder}, \; s(0)=0\}.$$ 
           Now one can apply Day's  theorem to a compact convex subset $K$ of $E_1$ to eliminate   $s_\gamma$ as in Section \ref{eliminate} (much easier now). After this, each $\gamma$ has the form $\gamma(w,h)=\gamma(0,0)\cdot e^{t_\gamma D}(w, Bh),$  which is a similarity of $(N,d)$.   

       We have finished the proof of Theorem \ref{main-uniform} in the case  when $\dim(W)=1$,    $\dim(N/W)\ge 2$.

       As already indicated above, the arguments in this section are valid for product metric spaces $\mathbb R \times Y$:
       \begin{theorem}\label{rtimesY}
       Let $Y$ be a proper metric space with a distance $d$. Let $\alpha\ge 1$ and $\tilde d$ be the distance on $\mathbb R\times Y$ defined by 
       $\tilde d((w_1, y_1), (w_2, y_2))=|w_1-w_2|+d(y_1, y_2)^{\frac{1}{\alpha}}$.
         Let 
       $\Gamma$ be   
    an  amenable      uniform quasisimilarity group of   $(\mathbb R\times Y, \tilde d)$.  Suppose the action of $\Gamma$ on  $\mathbb R\times Y$  permutes the subsets
 $\{\mathbb R\times \{y\}: y\in Y\}$ and induces a similarity action on $(Y, d)$.   
      Then 
       $\Gamma$ can be biLipschitzly conjugated into  the similarity group of 
       $(\mathbb R\times Y, \tilde d)$.
       \end{theorem}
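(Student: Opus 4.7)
The plan is that this theorem requires essentially no new argument: the entire proof in this section was written so that $H$ appears only as a proper metric space, the measure on $N$ only as the Hausdorff measure, and $\bar d_{CC}$ only through the $\frac{1}{\alpha}$-Hölder quasisimilarity bound. So I would simply transcribe the preceding pages, replacing $H$ by $Y$, $\bar d_{CC}$ by $d$, and the $D$-homogeneous distance by $\tilde d$. The initial application of the Tukia-type theorem for Carnot groups is skipped, because the hypothesis already gives that the induced action on $(Y,d)$ is by similarities; this is the only place Carnot structure was used.

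Concretely, I would first write $\gamma(w,y)=(\gamma^{\mathbb R}(w,y),\gamma^Y(y))$ with $\gamma^Y$ a similarity of $(Y,d)$ of constant $e^{\alpha t_\gamma}$, set $\tilde\gamma(w,y)=e^{-t_\gamma}\gamma^{\mathbb R}(w,y)$, pass to an orientation-preserving index-$2$ subgroup, and define $u_\gamma$ as before with values in $[1/\Lambda,\Lambda]$. Then I would put the weak$^*$ topology on $L^\infty(\mathbb R\times Y)$ (with respect to Hausdorff measure), define the affine $\Gamma^*$-action $\gamma\cdot\phi=u_\gamma(\phi\circ\gamma)$, verify separate continuity exactly as in the first lemma (the area formula on $(\mathbb R\times Y,\tilde d)$ yields the Jacobian $e^{t_\gamma d_0}u_\gamma$ where $d_0$ is the Hausdorff dimension of $\tilde d$), and invoke amenability plus Day's fixed point theorem to get $u\in K$. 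I would then fix a countable dense $\Gamma_0\subset\Gamma$, extract a $\Gamma_0$-invariant full measure set $U\subset Y$ via Fubini, define $v_y(w)=\int_0^w u(s,y)ds$ and $G_0(w,y)=(v_y(w),y)$, and repeat the chain Lemma \ref{v_h} $\to$ Lemma \ref{p-con} $\to$ Lemma \ref{v_h control} $\to$ Lemma \ref{G_0bilip} $\to$ Lemma \ref{simialongW}. The only metric input used by Lemma \ref{v_h control} is $|\tilde\gamma(w,y_1)-\tilde\gamma(w,y_2)|\le\Lambda d(y_1,y_2)^{1/\alpha}$, which follows directly from $\gamma$ being a $(\Lambda,e^{t_\gamma})$-quasisimilarity of $(\mathbb R\times Y,\tilde d)$. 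After conjugating by $\bar G_0$ the group acts by similarities along cosets of $\mathbb R$, so each $\gamma$ has the form $\gamma(w,y)=\gamma(0,y_0)\cdot e^{t_\gamma D}(w+s_\gamma(y),\gamma^Y(y))$, and a second application of Day's theorem on a compact convex subset of the $\frac{1}{\alpha}$-Hölder functions vanishing at the basepoint (as in the reference to Section \ref{eliminate}) eliminates $s_\gamma$.

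The main obstacle, I expect, is checking that the measure-theoretic ingredients still work when $Y$ is only assumed to be a proper metric space. Specifically, one has to justify (i) that $\mathbb R\times Y$ with its Hausdorff measure admits the product-type Fubini decompositions used throughout (slicing in $w$ for fixed $y$, extracting the full measure set $U\subset Y$); (ii) that the area formula applies to the biLipschitz maps $\gamma$ in this generality with the stated Jacobian; and (iii) that $C_c(\mathbb R\times Y)$ is dense in $L^1$ so the separate continuity proof still runs. In the Carnot case these are classical, but here one must either observe that they depend only on $(\mathbb R\times Y,\tilde d)$ being a proper doubling (or at least locally compact separable) metric measure space with the appropriate slicing, or else assume such regularity implicitly. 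Once these standard facts are in place, every lemma of the section goes through word for word.
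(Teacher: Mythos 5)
Your proposal is correct and follows the same route as the paper: the authors' proof of Theorem \ref{rtimesY} is precisely a remark that the preceding lemmas, from the Day fixed-point construction of $u$ through Lemmas \ref{v_h}, \ref{p-con}, \ref{v_h control}, \ref{G_0bilip}, \ref{simialongW}, use only that $H$ is a proper metric space, the Hausdorff measure, and the $\frac{1}{\alpha}$-H\"older quasisimilarity bound, so they carry over verbatim with $H$ replaced by $Y$ and $\bar d_{CC}$ by $d$, with the initial Tukia-type step now supplied by hypothesis. Your final step (eliminating the shear $s_\gamma$ by a second application of Day's theorem on H\"older functions) also matches the paper, which offers the same argument or a citation to an external theorem as an alternative.
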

       
       \begin{proof}
       The argument in this section shows  that, after a conjugation,   $\Gamma$   acts on the ``fibers'' $\mathbb R\times \{y\}$ by similarities.    At this point the elements of 
       $\Gamma$ may still contain shear components $s$.   One can get rid of the shear components by modifying the above arguments or apply Theorem 3.3 from \cite{DyX16}.  
  \end{proof}

        \section{Case $\text{dim}(N/W)=1$}\label{n/w}

  In this section we consider the case when              $\text{dim}(N/W)=1$.

  As before we shall make a biLipschitz conjugation so that $\Gamma$ is a fiber similarity group.  
           We first need  to show that (by replacing $\Gamma$ with a biLipschitz   conjugate if necessary) $\Gamma$   induces an action on the quotient by affine automorphisms. When $\dim(N/W)\ge 2$, we can apply a Tukia  type theorem to achieve this.  
            We need to consider the case  $\dim(N/W)=1$    separately since 
            we  can not apply a Tukia type theorem in   this case.  The theorem of Farb-Mosher \cite {FM99}  says that  a uniform quasisimilarity group of the real line is biLipschitz conjugate to a similarity group.   Although the action of $\Gamma$ on $N$ induces a uniform quasisimilarity action  of $\Gamma$ on the quotient $N/W=\mathbb R$,  it is not clear how to lift the biLipschitz conjugating map of $\mathbb R$  to a biLipschitz map of $N$.   
            

      We consider two cases:\newline
            Case I.  There is a one dimensional ideal  $H$ of   $\mathfrak n$ 
              such that     $H\subset V_\alpha$  and   $H\cap W_\alpha=\{0\}$. 
               In this case   $\mathfrak n=\mathfrak w\oplus H$ is a direct sum of two ideals     and $N=W\times H$ is a direct product.\newline
             Case II.     $\mathfrak n$ does not admit a direct sum decomposition as in Case I. 
             
               Case I is easy to deal with.  Since $\Gamma$ permutes the cosets of $W$, it induces an action on the quotient $N/W=H=\mathbb R$  and yields a subgroup $\overline\Gamma\subset \text{Homeo}(\mathbb R)$ which is a  uniform quasisimilarity  group.   The result of  Farb--Mosher
               \cite{FM99} 
              says that there is a biLipschitz map $f_0: \mathbb R\ra \mathbb R$  such that $f_0\overline \Gamma f_0^{-1}$ 
               consists of similarities.  Now let $F_0=\text{Id}\times f_0: N=W\times H\ra W\times H=N$. Then $F_0$ is a biLipschitz map of $N$ and the conjugate $F_0\Gamma F_0^{-1}$ has the property that its induced action on the quotient $N/W=H$ is by similarities.   If $\dim(W)\ge 2$,   then we can apply   Theorem   \ref{foliatedtheorem} 
                to conjugate $\Gamma$ into a fiber similarity group.  If $\dim(W)=1$, then Section \ref{dim(w)=1}  can now be used to finish the proof.
                 This  finishes Case I.

          We next consider Case II.       In this case we shall prove that the induced action of $\Gamma$ on the quotient  $N/W=\mathbb R$ is already by similarities   and so there is no need to conjugate.   So we assume  $\mathfrak n$ does not admit a direct sum decomposition as in Case I.     We notice that in this case $\dim(W)\ge 2$.   After applying 
            Theorem   \ref{foliatedtheorem} we may assume that the action of $\Gamma$ on the cosets of $W$ are by similarities.   
       By the discussion in 
           Section \ref{compatible},    for each $\gamma\in \Gamma$, 
            there is a graded   automorphism $\phi_\gamma$ of $W$   such that 
                $ (L_{\gamma(g)^{-1}}\circ \gamma \circ L_g)|_W=\phi_\gamma$ for any $g\in N$.  The element $\gamma$ induces a 
                 biLipschitz map $\bar \gamma:  N/W=\mathbb R\ra N/W$.

                 {\bf{Claim}}   $\bar \gamma$ has  constant derivative  a.e.    and so   is a  similarity. 
                 
                  Suppose the contrary  holds. Then there are two points $\bar g_1\not=\bar g_2\in N/W$ such that $\bar \gamma$ is differentiable at both $\bar g_1$ and $\bar g_2$ and  $\bar b_1\not=\bar b_2$, where 
                   $\bar b_1$, $\bar b_2$    are the derivatives   of $\bar \gamma$ at $\bar g_1$ and $\bar g_2$ respectively.  Let $\phi_i: N\ra N$ be a  blow-up of $\gamma$ at $g_i$; that is,   there is a sequence $t_j\ra \infty$ such that the sequence of maps
                    $e^{t_jD}\circ L_{(\gamma(g_i))^{-1}}\circ \gamma\circ L_{g_i}\circ e^{-t_jD}: N\ra N$  converges uniformly on compact subsets to $\phi_i$.  Such a sequence  $\{t_j\} $   exists by Arzela-Ascoli as $\gamma$ is biLipschitz.   Different choices of the sequences $\{t_j\}$ may yield different limits $\phi_i$. But all we need is one such limit. 
                  The  $\phi_i$ still acts on the cosets of $W$ by $\phi_\gamma$, but now has the additional property that its induced action on $N/W=\mathbb R$ is      multiplication by  $\bar b_i$ (as $\bar \gamma$ is differentiable at $\bar g_i$ with derivative $\bar b_i$).  Let $X\in V_\alpha\backslash W_\alpha$.     The   Case II  assumption implies that $[X, W_1]\not=0$  and so $\alpha$ must be   an  integer  as $[X, W_1]\subset W\cap V_{1+\alpha}$.   
                   By  Lemma \ref{sj}, there is an injective linear map
                  $B_i: \mathbb  R X\ra V_\alpha$  ($B_i$ also depends on $\gamma$) such that $B_i$ induces the linear map 
                   $N/W=\mathbb R\ra N/W, t\mapsto \bar b_i t$ and 
                   $d\phi_\gamma [X, w]=[B_i X, (d\phi_\gamma) w]$   for all $w\in \mathfrak w$.  
                    It follows that $[(B_1-B_2)X, (d\phi_\gamma) w]=0$ for all $w\in \mathfrak w$.   
                     As $d\phi_\gamma$ is an automorphism of $\mathfrak w$, we have  $[(B_1-B_2)X, \mathfrak w]=0$.    
                    Notice that  $(B_1-B_2)X= (\bar b_1-\bar b_2) X+Y$ for some $Y\in W_\alpha$.   
                    Since $\bar b_1-\bar b_2\not=0$, $X\notin \mathfrak w$   and $\mathfrak w$ has codimension one in $\mathfrak n$,    we see that 
                      $\mathfrak n$ is spanned by $\mathfrak w$ and $    (B_1-B_2)X$.   
                         Let $H'$ be the subspace  of $\mathfrak n$   spanned by $(B_1-B_2)X$. Then we have that $\mathfrak n=\mathfrak w\oplus H'$ is a direct sum of two ideals, contradicting our assumption. This  completes Case II.   
                         
                          We have shown that $\Gamma$ is a fiber similarity group, after possibly replacing $\Gamma$ with a biLipschitz conjugate. 
                             Now the arguments in Sections \ref{compatible}--\ref{cstructure}   
                                can be applied to show that $\Gamma$   can be 
                                conjugated into  
       $\text{Sim}(N, d_0)$ where $d_0$ is a fixed  maximally symmetric $D$-homogeneous distance on $N$.

                         Now we have  finished the proof of Theorem \ref{main-uniform} in the case when $\dim(N/W)=1$.  Combining this with Section \ref{cstructure}
                             and  Section \ref{dim(w)=1}   we have   completed  the  proof of Theorem \ref{main-uniform}.

       One can use semi-direct product (see  Appendix  \ref{nilexample})          to  construct  many  examples  of   Case II.  For instance, let 
             $\mathfrak n=\mathfrak e\rtimes \mathbb R$, where $\mathfrak e$ is the    Engel  algebra (with basis $e_0, e_1, e_2, e_3$ and only non-trivial brackets $[e_0, e_j]=e_{j+1}$, $j=1, 2$)   and $\mathbb R=\mathbb R X$ acts on $\mathfrak e$ by
      $[X, e_1]=e_3$ (all other brackets are $0$).

      Finally we prove a Tukia-type theorem for the product of finitely  many Carnot groups (with possibly different scalings on the factors).
      
      \begin{theorem}\label{product of Carnot}
        For $1\le i\le m$, let $\alpha_i\ge 1$  be  a  constant,   $N_i$ a 
       Carnot group and   $d_i$  a maximally symmetric Carnot metric on $N_i$.  
       Set  $N=\prod_{i=1}^m N_i$  and let $d$ be the distance on $N$ given by $d((x_i), (y_i))=\sum_i d_i^{\frac{1}{\alpha_i}}(x_i, y_i)$.  
        Let $\Gamma$ be   a    
           uniform quasisimilarity group of  $(N,d)$.   
        If the action of $\Gamma$ on the space of distinct pairs of   $N$
        is cocompact, then 
         $\Gamma$ can be  conjugated  by a  biLipschitz  map  into the similarity group of  $(N,d)$.  
        If  $\dim(N_i)=1$ for some $i$ we further assume $\Gamma$ is amenable.

      \end{theorem}
      
      \begin{proof}    By combining those $N_i$ where the corresponding $\alpha_i$ are equal we may assume the $\alpha_i$'s are all distinct. 
     After reordering  we may  further assume $\alpha_1<\alpha_{2}<\cdots < \alpha_m$.   Set  $M_j=\prod_{i=1}^j N_i$.  
       Then $\{M_j\}_j$ form the preserved subgroup sequence
        of $N$.    By  Theorem \ref{foliatedtheorem}   
             we may assume  $\Gamma$ acts on the fibers $N_j=M_j/{M_{j-1}}$ 
        by similarities  for all $j$ satisfying $\dim (N_j)\ge 2$.     Denote $H_j=\prod_{i=m-j}^m N_i$ and let $\tilde d_j$ be   the distance on $H_j$ given by 
         $\tilde d_j((x_i), (y_i))=\sum_{i=m-j}^m d_i^{\frac{1}{\alpha_i}}(x_i, y_i)$.            We next prove by induction on $j$,  starting from $j=0$,   that after a biLipschitz conjugation  of $\Gamma$  the induced action of $\Gamma$  on   $(H_j, \tilde d_j)$ 
         is by similarities. 
         For the base case we let $j=0$. If $\dim (N_m)\ge 2$, then   the induced action on 
           $(H_0, \tilde d_0)=(N_m,  d_m^{\frac{1}{\alpha_m}})$  is already by similarities.  If $\dim (N_m)=1$, then  the argument in Case I of this section applies. 

       For the induction step, assume $j\ge 1$ and 
         that the induced action on    $(H_{j-1}, \tilde d_{j-1})$ 
         is by similarities. We need to show that, after a further biLipschitz conjugation, the induced action on  $(H_j, \tilde d_j)$    
         is by similarities.   If $\dim(N_{m-j})=1$, then 
          Theorem \ref{rtimesY}   
           implies there is a biLipschitz map $f_0$ of   $(H_j, \tilde d_j)$ 
        which conjugates the induced action on $(H_j, \tilde d_j)$ 
         to an action by similarities.  Let $F_0$ be the biLipschitz map of $N$ which equals $f_0$ on $H_j$  and is the identity map on  $N_i$, $1\le i<m-j$. Then $F_0\Gamma F_0^{-1}$ induces a similarity action on   $(H_j, \tilde d_j)$. 

         Next we assume $\dim(N_{m-j})\ge 2$.   
         By the induction hypothesis the induced action on $(H_{j-1},  \tilde d_{j-1})$ is by similarities. By the first paragraph, 
         the action on  the fibers $N_{m-j}=M_{m-j}/{M_{m-j-1}}$  is 
        by similarities.   
         It follows that the induced action of each $\gamma$  on  $H_j$   is given by:
           $\gamma(x_{n-j}, y)=\gamma(0) (A_\gamma(x_{n-j} s_\gamma(y)), B_\gamma  y)$, where  $x_{m-j}\in N_{m-j}$,  $y\in H_{j-1}$,  $A_\gamma$ is an automorphism of $N_{m-j}$ and is also a similarity with respect to 
           $d_{m-j}$,  $B_\gamma$ is a similarity of    $(H_{j-1}, \tilde d_{j-1})$   
            and 
           $s_\gamma:   (H_{j-1}, \tilde d_{j-1}) \ra (Z(N_{m-j}),  d_{m-j}^{\frac{1}{\alpha_{m-j}}})$ is  Lipschitz  and satisfies $s_\gamma(0)=0$.  
             We  identify the Lie group with its Lie algebra via the exponential map   and 
            write $s_\gamma=\sum_i s_{\gamma,i}$, where 
            $s_{\gamma,i}:  H_{j-1}\ra Z_i(\mathfrak n_{m-j})=Z(\mathfrak n_{m-j})\cap \mathfrak n_{m-j, i} $ is the $i$-layer component of $s_\gamma$  and $\mathfrak n_{m-j}=\oplus_i \mathfrak n_{m-j, i} $ is the Carnot grading of $\mathfrak n_{m-j}$.     For $i\ge 1$, let
             $$E_i=\{c:     (H_{j-1}, \tilde d_{j-1})  \ra (Z_i(\mathfrak n_{m-j}), |\cdot|^{\frac{1}{i\alpha_{m-j}}})\;\;\text{is}\;\; \text{Lipschitz  and }\;\; c(0)=0\},$$
              where $|\cdot|$ is a fixed  Euclidean metric  on  $Z_i(\mathfrak n_{m-j})$.    It is easy to see that  $s_{\gamma,i}\in E_i$ for all $\gamma\in \Gamma$. Furthermore, 
                  if $s=\sum_i s_i$ with $s_i:   H_{j-1}\ra Z_i(\mathfrak n_{m-j}) $, then the shear map    $F_s:  H_j\ra H_j $, $F_s(x_{n-j}, y)=(x_{n-j}s(y), y)$  is  biLipschitz  if and only if $s_i\in E_i$.  
           Now applying the arguments from Section \ref{eliminate}  
              we can find a biLipschitz  shear map $f_0$ of  $(H_{j}, \tilde d_{j})$   that  conjugates  the induced action of $\Gamma$ on    $(H_j, \tilde d_j)$   
           to an action  by similarities.  Letting  $F_0$ be the biLipschitz map of $N$ that agrees with  $f_0$ on 
          $H_j$  and is the identity on  $N_i$  with $1\le i< m-j$ we see that $F_0\Gamma F_0^{-1}$    induces  a similarity action on    $(H_j, \tilde d_j)$

      \end{proof}



             \appendix

             \section{Examples  of Carnot-by-Carnot  groups} \label{nilexample}
         

In this appendix we shall give examples of    Carnot-by-Carnot  groups.   
 The upshot is that such groups are abundant.  The groups are Carnot group by Carnot  group extensions. These correspond to Lie algebra extensions of the type Carnot algebra by Carnot algebra.
 The classical connection between group extension and group cohomology  has a counterpart for Lie algebras: there is a connection between Lie algebra extension and Lie algebra cohomology, see  \cite{S66}, \cite{HS}.  We briefly recall this connection here.   Any extension  $0\rightarrow \mathfrak w\rightarrow \mathfrak n \rightarrow \mathfrak h\rightarrow 0$   induces a Lie algebra homomorphism
 $\bar\alpha: \mathfrak h \rightarrow  \text{out}(\mathfrak w):=\text{der}(\mathfrak w)/\text{ad}(\mathfrak w)$. Conversely, any  Lie algebra homomorphism
 $\bar\alpha: \mathfrak h \rightarrow  \text{out}(\mathfrak w)$  induces a  Lie algebra homomorphism $\beta: \mathfrak h\rightarrow \text{der}(\mathcal Z(\mathfrak w))$,  and there exists an  extension  $0\rightarrow \mathfrak w\rightarrow \mathfrak n \rightarrow \mathfrak h\rightarrow 0$ inducing the given $\bar\alpha$ if and only if a particular cohomology class in $H^3(\mathfrak h, \mathcal Z(\mathfrak w))$ vanishes, where the $\mathfrak h$ module structure on $\mathcal Z(\mathfrak w)$ is given by $\beta$.   When an extension as above does exist,  the set of  equivalence classes of extensions of $\mathfrak w$ by $\mathfrak h$ inducing the given $\bar \alpha$ is  in one-to-one correspondence with the elements of  $H^2(\mathfrak h, \mathcal Z(\mathfrak w))$.
    It follows that for every example of extension 
 $0\rightarrow \mathfrak w\rightarrow \mathfrak n \rightarrow \mathfrak h\rightarrow 0$, we get a collection of extensions parametrized by $H^2(\mathfrak h, \mathcal Z(\mathfrak w))$.   If one starts with a split extension (which corresponds to semi-direct product) and     the second cohomology is non-trivial, then one gets extensions that are no longer split. This applies to the semi-direct product examples below.

We next describe some examples.  Let $\mathfrak w=W_1\oplus \cdots \oplus W_m$ and $\mathfrak h=H_1\oplus \cdots H_n$ be two Carnot algebras.

\vspace{3mm}

\noindent
{\bf{Trivial extension.}}     This is the direct sum $\mathfrak w\oplus \mathfrak h$.

\vspace{3mm}

\noindent
{\bf{Semi-direct product.}} 
Recall that each Lie algebra homomorphism $\mathfrak h\rightarrow \text{der}(\mathfrak w)$ determines a semi-direct product $\mathfrak w\rtimes \mathfrak h$.  
   The trivial homomorphism yields the direct sum $\mathfrak w\oplus \mathfrak h$.
    We shall construct  nontrivial 
 Lie algebra homomorphisms $\mathfrak h\rightarrow \text{der}(\mathfrak w)$.   
We first recall a result about free nilpotent  Lie algebras, see \cite{S71}. 

\begin{prop}\label{freeuniversal}
  Let $F=F_1\oplus \cdots \oplus  F_t$ be a $t$-step free nilpotent Lie algebra, and $L: F_1\rightarrow F$ be any linear map.
 Then\newline
 (1) $L$ extends  uniquely  to a  derivation $d: F\rightarrow F$;\newline
 (2) $L$ extends  uniquely to a   Lie algebra homomorphism $\phi: F\rightarrow F$.  
\end{prop}

 Assume $\mathfrak h=F/{I}$ and $\mathfrak w=\tilde F/{\tilde I}$,   where $F=F_1\oplus \cdots \oplus F_t$ and 
$\tilde F=\tilde F_1\oplus \cdots \oplus \tilde F_{\tilde t}$  are
   free nilpotent Lie algebras,  and $I\subset F$,   $\tilde I\subset \tilde F$   are  graded ideals  
satisfying  $I\subset F_{k+1}\oplus \cdots \oplus F_t$, $\tilde I\subset \tilde F_{\tilde k+1}\oplus \cdots \oplus \tilde F_{\tilde t}$ for some 
    positive integers $k, \tilde k $.  
  For each integer $s\ge 0$, let $\text{der}_s(\mathfrak w)$ be the linear subspace of $\text{der}(\mathfrak w)$ defined by:
 $$\text{der}_s(\mathfrak w)=\{\rho: \mathfrak w\rightarrow \mathfrak w \;\;\text{is a derivation satisfying }\;\;   \rho(W_i)\subset W_{i+s}, \forall i\}.$$

\begin{Le}
Let   $\alpha\ge 2$ be an integer and 
$L: H_1\rightarrow \text{der}_\alpha(\mathfrak w)$ be a linear map. If $m\le (k+1)\alpha$, then $L$ extends uniquely to a Lie algebra homomorphism $\mathfrak h\rightarrow \text{der}(\mathfrak w)$.

\end{Le}

\begin{proof}
Let $L_\alpha$ be the Lie subalgebra of $\text{der}(\mathfrak w)$ generated by 
 $\text{der}_\alpha(\mathfrak w)$.  Since $\mathfrak w$ is $m$-step, the assumption 
$m\le (k+1)\alpha$  implies that $L_\alpha$ is a nilpotent Lie algebra with step at most $k$.  
As  $\mathfrak h=F/{I}$  with  $I\subset F^{(k+1)}$, we may identity $H_1$ with the first layer $F_1$  of $F$. Now $F$ is free nilpotent with step at least $k$ and $L_\alpha$ has step at most $k$.  By the universal property of free nilpotent Lie algebra the linear map
 $L: H_1=F_1\rightarrow L_\alpha$ extends uniquely to a Lie algebra homomorphism
 $\rho: F\rightarrow L_\alpha$.  Since $L_\alpha$   has step at most $k$, we have $\rho(F^{(k+1)})=0$.   As     $\mathfrak h=F/{I}$  with  $I\subset F^{(k+1)}$, 
 $\rho$ induces a Lie algebra homomorphism $\mathfrak h\rightarrow L_\alpha\subset \text{der}(\mathfrak w)$ that extends $L$.

\end{proof}

\begin{Le}
Let $L(W_1, W_{\alpha+1})$ be the vector space of linear maps from $W_1$ to $W_{\alpha+1}$, and
 $\Phi:  \text{der}_\alpha(\mathfrak w) \rightarrow L(W_1, W_{\alpha+1})$ be the restriction map,  $\Phi(\rho)=\rho|_{W_1}$. If $m\le \alpha+\tilde k$, then $\Phi$ is a linear  isomorphism.

\end{Le}

\begin{proof}
Since $\mathfrak w$ is generated by $W_1$, we see that $\Phi$ is injective.  
 We shall establish surjectivity of $\Phi$ by showing that each map in $L(W_1, W_{\alpha+1})$ extends to a derivation, which necessarily lies in $\text{der}_\alpha(\mathfrak w)$.   Let $L\in L(W_1, W_{\alpha+1})$.   As $\mathfrak w=\tilde F/{\tilde I}$  with  $\tilde I\subset \tilde F^{(\tilde k+1)}$, we may identity $W_1$ with $\tilde F_1$  and
 view  $L$ as a linear map from $\tilde F_1$ to $W_{\alpha+1}$.  Since 
 $W_{\alpha+1}=\tilde F_{\alpha+1}/(\tilde F_{\alpha+1}\cap \tilde I)$, we can 
 lift $L$ to a map into $\tilde F_{\alpha+1}$, that is, there is a 
linear map $\tilde L: \tilde F_1\rightarrow  \tilde F_{\alpha+1}$ such that $\pi\circ \tilde L=L$, where 
$\pi: \tilde F\rightarrow  \mathfrak w$  is the projection. By Proposition \ref{freeuniversal}  the linear map $\tilde L$ extends to a 
   derivation $\tilde d: \tilde F\rightarrow \tilde F$.  Notice that 
    $\tilde d(\tilde F^{(\tilde k+1)})\subset \tilde F^{(\tilde k+1+\alpha)}$.  
 As $\mathfrak w$ is $m$-step, the assumption
  $m\le \alpha+\tilde k$ implies  that  $(\pi\circ \tilde d) (\tilde F^{(\tilde k+1)})=0$.  
As   $\mathfrak w=\tilde F/{\tilde I}$  with  $\tilde I\subset 
    \tilde F^{(\tilde k+1)}$,   it is easy to see that $\tilde d$ induces a derivation
   $d: \mathfrak w\rightarrow \mathfrak w$  that extends $L$.


\end{proof}

The following Corollary provides many examples of nontrivial semi-direct products
of  Carnot algebras and so     nontrivial semi-direct products of   Carnot groups.

\begin{corollary}\label{semi-example}
Suppose $m\le \min\{\tilde k+\alpha, (k+1)\alpha\}$. Then for  any linear map  $f: H_1\rightarrow  L(W_1, W_{\alpha+1})$, the map   $\Phi^{-1}\circ f$ extends uniquely to a Lie algebra homomorphism
 $\mathfrak h\rightarrow \text{der}(\mathfrak w)$.  
 In particular,  for any  nonzero linear map  $f: H_1\rightarrow  L(W_1, W_{\alpha+1})$, there is a nontrivial 
 Lie algebra homomorphism  $\mathfrak h\rightarrow \text{der}(\mathfrak w)$ and so a nontrivial semidirect product $\mathfrak w\rtimes \mathfrak h$.

\end{corollary}

We mention a few special  cases of Corollary \ref{semi-example}. \newline

  (1) $\alpha=m-1$.  In this case,  the condition   $m\le \min\{\tilde k+\alpha, (k+1)\alpha\}$
 is automatically satisfied.  So every nonzero linear map $f: H_1\rightarrow L(W_1, W_m)$  will yield a nontrivial semidirect product $\mathfrak w\rtimes \mathfrak h$.    \newline
  (2)  $\alpha=m-2$, $m\ge 4$ and $\tilde k\ge 2$, In this case,   the condition  
 $m\le \min\{\tilde k+\alpha, (k+1)\alpha\}$
 is  satisfied.   And so  every nonzero linear map $f: H_1\rightarrow L(W_1, W_{m-1})$  will yield a nontrivial semidirect product $\mathfrak w\rtimes \mathfrak h$.


\vspace{3mm}

\noindent
{\bf{Central product.}}   Let $\alpha\ge 2$ be an integer and  $\mathfrak h=H_1\oplus \cdots\oplus  H_n$,    $\mathfrak w=W_1\oplus \cdots \oplus W_{\alpha n}$   be two Carnot algebras.  
Let $W'\subset W_{\alpha n}$ and $H'\subset H_n$ be linear subspaces and $\phi: W'\rightarrow H'$ a linear isomorphism.  The corresponding central product   $\mathfrak w\times_\phi \mathfrak h$   is the quotient of the direct sum 
 $\mathfrak w \oplus \mathfrak h$ by the central ideal $\{(w, -\phi(w))| w\in W'\}$.   
 Clearly there is a short exact sequence $0\rightarrow \mathfrak w\rightarrow \mathfrak w\times_\phi \mathfrak h\rightarrow \mathfrak h/{H'}\rightarrow 0$ and so $\mathfrak w\times_\phi \mathfrak h$ is an extension of $\mathfrak w$ by $\mathfrak h/{H'}$.   Notice that $\mathfrak h/{H'}=H_1\oplus \cdots \oplus (H_n/{H'})$ is a 
 Carnot algebra.

         \section{Lattices in SOL-like groups}\label{lattice}


                   In this appendix  we give examples of SOL-like groups that admit lattices.


           { There are necessary and sufficient conditions for the existence of lattices in solvable Lie groups, see  Theorem 6.2 in \cite{M70} and Chapter III, Section 6 of \cite{Au73}.
             But those conditions are not easy to check.
            Here we cite a result by Sawai--Yamada \cite{SY05} which  gives a simple sufficient condition for certain solvable Lie groups to admit lattices. }
           Let $\mathfrak n$ be  a   rational nilpotent Lie algebra. Here ``rational'' means   $\mathfrak n$ has  a basis with rational structure constants. Then it follows that $\mathfrak n$ has a basis $\{X_1,   \cdots, X_m\}$ with integer structure constants.
            Let  $\mathfrak n^{i}$  ($i=1, 2$)   be a copy of $\mathfrak n$ with corresponding basis  $\{X_1^i, \cdots, X_m^i\}$.
  So  $X_j\mapsto X_j^i$ extends to an isomorphism from $\mathfrak n$ to $\mathfrak n^i$.
      Suppose $k_j$, $1\le j\le m$ are integers and the map
      $X^1_j\mapsto k_j X^1_j$,  $X^2_j\mapsto -k_j X^2_j$,  extends to a derivation $D$ on
        $\mathfrak n^1\times \mathfrak n^2$.   Let $S$ be the semidirect product $(N^1\times N^2)\rtimes \mathbb R$,  where $N^i$ is the simply connected Lie group with Lie algebra $\mathfrak n^i$ and    the action of $\mathbb R$ on $N^1\times N^2$ is generated by the derivation $D$.

      \begin{theorem} (\cite{SY05},  Theorem 2) \label{SY}
      The solvable Lie group $S$ above admits a lattice.
      \end{theorem}

       We recall that   lattices in solvable Lie groups are always uniform \cite{M62}.

       We give two explicit examples.  The first is the so-called Benson--Gordon group \cite{BG}, which was also discussed in \cite{SY05}.
          In this example, $\mathfrak n^i$  ($i=1, 2$)  is a copy of the Heisenberg algebra with basis $X^i, Y^i, Z^i$
           and the only nontrivial bracket among basis elements  is $[X^i, Y^i]=Z^i$.   Let $k_1, k_2$ be integers.     Let $D_i:  \mathfrak n^i\ra \mathfrak n^i$ be the derivation given by   $D_i(X^i)=k_1X^i$,   $D_i(Y^i)=k_2Y^i$,
            $D_i(Z^i)=(k_1+k_2)Z^i$.
            Let $D=(D_1, -D_2)$ be the derivation of $\mathfrak n^1\times \mathfrak n^2$.
              The  semidirect product $S_{k_1, k_2}=(N^1\times N^2)\rtimes_D \mathbb R$ is a Benson-Gordon group.
               When $k_1, k_2$ are positive,    $S_{k_1, k_2}$   is a SOL-like  group that we are interested.
                  When $k_1=k_2$ is positive,
                  $(N^i, D_i)$  is of Carnot type.    Next we give an example  where $N$ is  Carnot-by-Carnot. 

               In the second example  $\mathfrak n$  is a semi-direct product $\mathfrak n=\mathfrak e\rtimes \mathfrak h$, where $\mathfrak e$ is the  Engel  algebra (with basis $e_0, e_1, e_2, e_3$  and only non-trivial brackets $[e_0, e_i]=e_{i+1}$, $i=1,2 $)   and $\mathfrak h$ is the Heisenberg algebra (with basis $X, Y,   Z$ and only non-trivial bracket $[X,Y]=Z$),  and the action of $\mathfrak h$ on $\mathfrak e$ is given by $[X, e_0]=e_3$, $[Y, e_1]=e_3$ (all other brackets are $0$).     This
                 semi-direct product    is of  Case (1) discussed after Corollary \ref{semi-example}.
            Let $D_0$ be the  derivation of $\mathfrak n$ given by $D_0(e_0)=e_0$, $D_0(e_j)=je_j$ ($j=1, 2,3$),  $D_0(X)=2X$, $D_0(Y)=2Y$,
             $D_0(Z)=4Z$.     Set $D_1=D_2=D_0$.
             Then $D=(D_1, -D_2)$ is a derivation of $\mathfrak n^1\times \mathfrak n^2$.
              Finally let $S=(N^1\times N^2)\rtimes_D \mathbb R$. 
            By
               Theorem   \ref{SY}   $S$ admits a    lattice.     In this example, $(N_i, D_i)$ is Carnot-by-Carnot.


      \section{Compatible expressions}\label{proof of compatible}

      In this appendix we prove Lemma \ref{sj}.  
      Recall the assumption: $(\mathfrak n, D)$ is a diagonal  Heintze pair, $\mathfrak w$ is an ideal of $\mathfrak n$ such that $D(\mathfrak w)=\mathfrak w$; $F:N\ra N$ is a biLipschitz map that permutes the cosets of $W$, where $W$ is the connected Lie subgroup of $N$ with Lie algebra $\mathfrak w$; for each $g\in N$, the map 
      $F_g|_W $ is an automorphism  $\phi$  of $W$, and $F$ induces an affine map  $\bar F$ of $N/W$.     Let $\bar B$ be the automorphism part of $\bar F$.


      The main ingredient in the proof is the fact that
                  $\phi\circ (\chi_g|_W)=(\chi_{G(g)}|_W)\circ \phi$, where $G=F_0$.  See Lemma \ref{onfiber}.   In the following proof  we will implicitly (and repeatedly) use the fact that  
                  $[Z(\mathfrak w), \mathfrak n]\subset Z(\mathfrak w)$.   This  follows from  the Jacobi identity  and the fact that $W$ is an ideal of $\mathfrak n$. 
               
               For a linear transformation  $T$ of a finite dimensional vector space, we denote by 
               $\sigma(T)$      the set of eigenvalues of $T$.   Recall  that 
                $\sigma(\bar D)\subset \sigma(D)$, where $\bar D: \mathfrak n/{\mathfrak w} \ra \mathfrak n/{\mathfrak w}$ is the derivation induced by $D$. Set $I=\sigma(D)-\sigma(\bar D)$.

      \begin{proof}
      Set $G=F_0=L_{F(0)^{-1}}\circ F$.   Then $G(0)=0$. By Lemma \ref{onfiber},  there is an automorphism $\phi$ of $W$ such that  if we set $A=d\phi$,   then 
        $A$ is layer-preserving,  
      $A\circ d(\chi_g|_W)\circ A^{-1}=d(\chi_{G(g)}|_W)$  and  $G(h*w)=G(h)*A w$ for 
                        any $g=h*w\in \mathfrak n$, where $h\in H, w\in \mathfrak w$.
                  Let $H'\subset \mathfrak n$ be a graded subspace of $\mathfrak n$ complementary to $\mathfrak w$; that is, for each $\lambda\in \sigma(\bar D)$, 
                   $H'_\lambda\subset V_\lambda$ is a complementary linear subspace of 
                    $W_\lambda$  in  $ V_\lambda$   (if $W_\lambda=\{0\}$, then  $H'_\lambda= V_\lambda$), and $H'=\oplus_{\lambda\in \sigma(\bar D)} H'_{\lambda}$.  
                       Denote by $B_0: H\ra  H'$ the linear isomorphism  $(\pi|_{ H'})^{-1}\circ d\bar B\circ (\pi|_{H})$.
                      Since $\pi(G(h))   
                      =d\overline B (\bar h)=\pi(B_0 h)$, there is a 
                        map  $S: \mathfrak n/\mathfrak w\ra \mathfrak w$ such that $G(h)=B_0h*S(\bar h)$.
                        It follows that the map $G$ has the form
                        $$G(h*w)=B_0h*S(\bar h)*A w.$$
                        In general,  $B_0$ and $S$ do not satisfy  conditions (2) and (3) in the definition of compatible expression. We need to modify the map $B_0$ (and so also $S$).     
                  
                      For each $\lambda\in \sigma(D|_{\mathfrak w})$,   denote
                  $Z_\lambda(\mathfrak w):=Z(\mathfrak w)\cap W_\lambda$        and let $Z_\lambda^\perp (\mathfrak w)\subset W_\lambda$ be a subspace  such that
                   $W_\lambda= Z_\lambda(\mathfrak w)\oplus Z_\lambda^\perp (\mathfrak w)$. 
                  The map $S$ can be written as $S(\bar h)=\sum_{\lambda\in \sigma(D|_{\mathfrak w})} S_\lambda(\bar h)$, where $S_\lambda:=\pi_\lambda\circ S$  and $\pi_\lambda: \mathfrak w \ra W_\lambda$ is the projection with respect to  the decomposition   $\mathfrak w=\oplus_\lambda W_\lambda$. 
                  There are two maps $\tilde S_\lambda: \mathfrak n/\mathfrak w\ra Z_\lambda(\mathfrak w)$ and $S_\lambda^\perp: \mathfrak n/\mathfrak w\ra
                   Z_\lambda^\perp (\mathfrak w)$  such that 
                  $S_\lambda(\bar h)=\tilde S_\lambda(\bar h)+S_\lambda^\perp(\bar h)$.

                  We shall use the equation  
                  $A\circ d(\chi_g|_W)\circ A^{-1}=d(\chi_{G(g)}|_W)$.
                    Let $\mu\in \sigma(D|_{\mathfrak w})$,  $w\in W_\mu$ and $h\in H$ be arbitrary.
                   We next  calculate  $A\circ d(\chi_h|_W)\circ A^{-1}(w) $    and $ d(\chi_{G(h)}|_W)(w)$.  
                  By  (\ref{conjugationformula}) 
                  $$A\circ d(\chi_h|_W)\circ A^{-1}(w)=w+A[h,A^{-1}w]+\sum_{i=2}^\infty\frac{1}{i!}A(\text{ad}\,h)^i(A^{-1}w)$$
                  and 
                    $$d(\chi_{G(h)}|_W)(w)=w+[G(h), w]+
                    \sum_{i=2}^\infty\frac{1}{i!}(\text{ad(G(h))})^i(w).$$
                      From  $A\circ d(\chi_g|_W)\circ A^{-1}=d(\chi_{G(g)}|_W)$,  
                     we get
                     \begin{align}\label{equality5.2}
L:=A[h, A^{-1}w]+\sum_{i=2}^\infty\frac{1}{i!}A(\text{ad}\,h)^i(A^{-1}w)=[G(h),  w]+
                    \sum_{i=2}^\infty\frac{1}{i!}(\text{ad(G(h))})^i(w):=R.
                    \end{align}
                     Since $\mathfrak w$ is an ideal in $\mathfrak n$, every item in 
                     (\ref{equality5.2}) lies in $\mathfrak w$. 
                     By the BCH formula, 
                      \begin{align}\label{F(h)}
                      G(h)=B_0h* S(\bar h)=B_0h+S(\bar h)+\frac{1}{2}[B_0h, S(\bar h)]+\cdots.
                      \end{align}

                   {\bf{First Claim}}:   $S_\lambda^\perp=0$   for $\lambda\in I$; in other words, $S_\lambda(\bar h)\in Z_\lambda(\mathfrak w)$  for  $\lambda\in I$.

                   The proof is by induction.  Let $\lambda_0\in I$.  Assume that 
                  $S_\lambda^\perp=0$ for all  $\lambda\in I$  with $\lambda<\lambda_0$. 
                    To simplify notation, we use
                     $w^z$ to denote an element of $Z(\mathfrak w)$, $w^>$ to denote an element of $\oplus_{\lambda>\lambda_0}W_\lambda$,
                      use  $\bar x$, $\bar y$ and so on  to denote elements in $\sum_{\lambda\in\sigma(\bar D)}W_\lambda$,  
                      and use subscripts to denote different such  elements.  Using this we can write
                     $S(\bar h)=\bar x_1+w_1^z+S_{\lambda_0}(\bar h)+w^>_1$.    Then 
                      $[B_0h, S(\bar h)]=\bar x_2+ w^z_2+w^>_2$ and all the iterated brackets of  $B_0h$ and $S(\bar h)$ have this form.  Hence    we have 
                      $$G(h)=B_0h+\bar x_3+w^z_3+S_{\lambda_0}(\bar h)+w^>_3.$$
                       From this we get 
                        $[G(h), w]=[B_0h+\bar x_3, w]+[S_{\lambda_0}(\bar h), w]+[w_3^{>}, w]$  and for $i\ge 2$, $(\text{ad}\,G(h))^i(w)=y_1+y_2$, with $ y_1\in\oplus_{\lambda\in \sigma(\bar D)}W_{\lambda+\mu}$  and 
                        $y_2\in \oplus_{\lambda>\lambda_0+\mu}W_\lambda$.  
                          Since by assumption $\lambda_0\in I$ we see that 
                           $\pi_{\lambda_0+\mu}L=0$ and 
                        $\pi_{\lambda_0+\mu}R=[S_{\lambda_0}(\bar h), w]$.  It follows that 
                        $[S_{\lambda_0}(\bar h), w]=0$ for    any   $w\in W_\mu$ and any
                          $\mu\in   \sigma(D|_{\mathfrak w})$  and therefore    
                             $S_{\lambda_0}(\bar h)\in Z_{\lambda_0}(W)$.

                    {\bf{Second  Claim}}: 
              for each $\lambda\in \sigma(\bar D)$, there is a linear map $B_\lambda: H\ra \mathfrak n$ satisfying  \newline
             $(a)_\lambda$.   $d\bar B\circ \pi|_H=\pi\circ B_\lambda$  and $B_\lambda(H_{\lambda'})\subset V_{\lambda'}$ for any $\lambda'\in \sigma(\bar D)$;\newline
             $(b)_\lambda$.   $B_\lambda|_{H_{\lambda'}}=B_0|_{H_{\lambda'}}$ for $\lambda'>\lambda$;\newline
             $(c)_\lambda$.   $[B_\lambda h, A w]=A[h, w]$ for any $w\in \mathfrak w$, and $h\in H_{\lambda'}$ with $\lambda'\le \lambda$;\newline
             $(d)_\lambda$.    The map $G$ can be written 
             $G(h*w)=B_\lambda h*S^{(\lambda)}(\bar h)*A w$, where 
             $S^{(\lambda)}: \mathfrak n/\mathfrak w\ra \mathfrak w$ is a map  satisfying 
                 $S^{(\lambda)}_{\lambda'}(\bar h)\in Z_{\lambda'}(\mathfrak w)$  for 
                   $\lambda'\le \lambda$, 
                   where $S^{(\lambda)}_{\lambda'}=\pi_{\lambda'}\circ S^{(\lambda)}$.

                  The proof of  the Second Claim is also by induction. 
                  We  let $\lambda_0\in \sigma(\bar D)$ and assume that  $B_\lambda$    satisfying  $(a)_\lambda-(d)_\lambda$  are defined for all 
                   $\lambda<\lambda_0$.   Let $\lambda_1<\lambda_0$ be the largest $\lambda\in \sigma(\bar D)$ less than $\lambda_0$.  
                  We shall  first show  that ${S_{\lambda_0}^{(\lambda_1)}}^\perp(\bar h)$ depends only on the $H_{\lambda_0}$ component   $h_{\lambda_0}$   of $h$ and is linear in $h_{\lambda_0}$.   
                  
                   Let $h\in H$ and $w\in W_\mu$ for some $\mu\in \sigma(D|_{\mathfrak w})$.    
                    We will  apply  $\pi_{\lambda_0+\mu}$  to  
                    both sides of (\ref{equality5.2}).
                      Using the First Claim   and the induction hypothesis we can write
                       $S^{(\lambda_1)}(\bar h)=w_1^z+S^{(\lambda_1)}_{\lambda_0}(\bar h)+w_1^>$. From this we get    $[B_{\lambda_1}h,  S^{(\lambda_1)}(\bar h)]=w_2^z+w_2^>$;
                        $G(h)=B_{\lambda_1}h+w_3^z+S^{(\lambda_1)}_{\lambda_0}(\bar h)+w_3^>$; 
                     $[G(h), w]=[B_{\lambda_1}h,  w]+[S^{(\lambda_1)}_{\lambda_0}(\bar h), w]+[w_3^>,w]$;    and for $i\ge 2$, 
                      $(\text{ad} \,G(h))^i(w)=(\text{ad} B_{\lambda_1}h)^i(w)+y_3$  with $y_3\in  \oplus_{\lambda>\lambda_0+\mu}W_\lambda$.   
                      Hence we have       
       $$  \pi_{\lambda_0+\mu}L=A[h_{\lambda_0}, A^{-1} w]+
       \sum_{i\ge 2}\frac{1}{i!}\sum_{\lambda_{j_1}+\cdots +\lambda_{j_i}=\lambda_0}               
       A(\text{ad} h_{\lambda_{j_1}}\circ \cdots  \circ \text{ad}h_{\lambda_{j_i}}(A^{-1}w)),$$
                        
                    $$ \pi_{\lambda_0+\mu}R=[B_{\lambda_1} h_{\lambda_0}, w]+
                    [S^{(\lambda_1)}_{\lambda_0}(\bar h), w]+\sum_{i\ge 2}\frac{1}{i!}
                    \sum_{\lambda_{j_1}+\cdots +\lambda_{j_i}=\lambda_0}               
                    \text{ad} B_{\lambda_{1}}h_{j_1}\circ \cdots \circ \text{ad} B_{\lambda_{1}}h_{j_i}(w).$$

                  
                  On the other hand  $(c)_{\lambda_1}$   implies for $i\ge 2$ 
               $$  A(\text{ad} h_{\lambda_{j_1}}\circ \cdots  \circ \text{ad} h_{\lambda_{j_i}}(A^{-1}w))=
                \text{ad} B_{\lambda_{1}}h_{j_1}\circ \cdots \circ \text{ad} B_{\lambda_{1}}h_{j_i}(w). $$
                    It follows that 
                     \begin{align}\label{e5.5}
                    A[h_{\lambda_0}, A^{-1} w]=[B_{\lambda_1} h_{\lambda_0}, w]+
                    [S^{(\lambda_1)}_{\lambda_0}(\bar h), w].
                    \end{align}

                    
                    Since the two terms  $ A[h_{\lambda_0}, A^{-1} w] $ and  $[B_{\lambda_1} h_{\lambda_0},w]$  
              depend only on the $H_{\lambda_0}$ component    $h_{\lambda_0}$  
                of $h$  and   are linear in $h_{\lambda_0}$, we see that 
              ${S_{\lambda_0}^{(\lambda_1)}}^\perp(\bar h)={S_{\lambda_0}^{(\lambda_1)}}^\perp(\bar h_{\lambda_0})$ also depends only on   $h_{\lambda_0}$ 
              and is linear in $h_{\lambda_0}$. 

                    
               We define $B_{\lambda_0} $ as follows:        $B_{\lambda_0}|_{H_{\lambda}}=B_{\lambda_1}|_{H_\lambda}$ for $\lambda\not=\lambda_0$   and
                          $B_{\lambda_0} h=B_{\lambda_1}  h+{S_{\lambda_0}^{(\lambda_1)}}^\perp(\bar h)$ for $h\in H_{\lambda_0}$.  
                     We need to verify $(a)_{\lambda_0}-(d)_{\lambda_0}$.  The properties  $(a)_{\lambda_0}$   and 
                       $(b)_{\lambda_0}$   are easy to see and  $(c)_{\lambda_0}$   follows from   $(c)_{\lambda_1}$,   (\ref{e5.5})  and the definition of $B_{\lambda_0}$.  
                        For  $(d)_{\lambda_0}$:  use  $(d)_{\lambda_1}$  and 
                       write $G$ as
                      $$ G(h*w)=B_{\lambda_1}  h*S^{(\lambda_1)}(\bar h)*A w
                       =B_{\lambda_0}h*S^{(\lambda_0)}(\bar h)*A w,$$
                        where $S^{(\lambda_0)}(\bar h)=(-B_{\lambda_0}h)*B_{\lambda_1}h*S^{(\lambda_1)}(\bar h)$.   
                        We need to show   $S^{(\lambda_0)}_{\lambda'}(\bar h)\in Z_{\lambda'}(\mathfrak w)$  for $\lambda'\le \lambda_0$.  By the definition of $B_{\lambda_0}$  and the linearity of $B_{\lambda_0}$,  $B_{\lambda_1}$  
                          we obtain  
                         $B_{\lambda_0}h=B_{\lambda_1}h+{S_{\lambda_0}^{(\lambda_1)}}^\perp(\bar h_{\lambda_0})=B_{\lambda_1}h+{S_{\lambda_0}^{(\lambda_1)}}^\perp(\bar h)$  for any $h\in H$.   Using this   we get     
                   $$[-B_{\lambda_0}  h, B_{\lambda_1}  h]=-[{S_{\lambda_0}^{(\lambda_1)}}^\perp(\bar h), B_{\lambda_1}  h]\in \oplus_{\lambda\ge (\lambda_0+\alpha)}W_\lambda.$$
                  By the BCH  formula,     
                          $(-B_{\lambda_0}h)*B_{\lambda_1}h
                          =(-B_{\lambda_0}h)+B_{\lambda_1}h+\frac{1}{2}[-B_{\lambda_0}h, B_{\lambda_1}h]+\cdots
                          =-{S_{\lambda_0}^{(\lambda_1)}}^\perp(\bar h)+ w_4,$ 
                           with $w_4\in \oplus_{\lambda\ge (\lambda_0+\alpha)}W_\lambda$. 
                             By  $(d)_{\lambda_1}$, 
                               $[-{S_{\lambda_0}^{(\lambda_1)}}^\perp(\bar h),  S^{(\lambda_1)}(\bar h)]\in \oplus_{\lambda\ge 2\lambda_0}W_\lambda$.  
                  Finally,   
                   \begin{align*}
                            S^{(\lambda_0)}(\bar h)&=(-B_{\lambda_0}h)*B_{\lambda_1}h*S^{(\lambda_1)}(\bar h)\\
                            &=-{S_{\lambda_0}^{(\lambda_1)}}^\perp(\bar h)+ w_4+S^{(\lambda_1)}(\bar h)+
                            \frac{1}{2}[-{S_{\lambda_0}^{(\lambda_1)}}^\perp(\bar h)+ w_4,  S^{(\lambda_1)}(\bar h)]+\cdots\\
                            &=S^{(\lambda_1)}(\bar h)-{S_{\lambda_0}^{(\lambda_1)}}^\perp(\bar h)+w_5, 
                            \end{align*}
                             with $w_5\in \oplus_{\lambda\ge (\lambda_0+\alpha)}W_\lambda$.
                     From  this,   $(d)_{\lambda_1}$  and the First Claim   we see  that 
                     $S_\lambda^{(\lambda_0)}(\bar h)=S_\lambda^{(\lambda_1)}(\bar h)\in Z_\lambda(\mathfrak w)$  for $ \lambda<\lambda_0$,   
                       and 
                       $$S_{\lambda_0}^{(\lambda_0)}(\bar h)
                       =S_{\lambda_0}^{(\lambda_1)}(\bar h)-{S_{\lambda_0}^{(\lambda_1)}}^\perp(\bar h)=
                       \tilde S_{\lambda_0}^{(\lambda_1)}(\bar  h)\in Z_{\lambda_0 }(\mathfrak w).$$ 
                     This  verifies     $(d)_{\lambda_0}$   and completes
the induction argument  for the Second Claim.                   
                  

                  Denote by  $\bar \lambda$   the largest eigenvalue of $\bar D$.  
                  We set $B=B_{\bar \lambda}$   and $s=A^{-1}\circ S^{(\bar \lambda)}$.  We need to verify  conditions 1--3 in the definition of a compatible expression. 
                   Condition 1  follows from $(a)_{\bar \lambda}$ and  Condition 2  follows from $(c)_{\bar\lambda}$. 
                  From $(d)_{\bar \lambda}$  we  get an expression for $F$
                   (as $G=F_0=L_{F(0)^{-1}}\circ F$):
                   $$F(h*w)=F(0)*Bh*A s(\bar h)*A w.$$ 
                    By the First Claim and $(d)_{\bar \lambda}$  we see that $s(\bar h)\in Z(\mathfrak w)$. This allows us to switch $A s(\bar h)$ and $Aw$ to arrive at a compatible expression for $F$.

      \end{proof}







\begin{thebibliography}{99}

               \bibitem[Ale75]{Ale75}  D. V. Alekseevski,
\emph{Homogeneous Riemannian spaces of negative curvature},
Mat. Sb.(N.S.) {\bf{96}}(138) (1975), 93--117, 168.


\bibitem[AS04]{AS04}  A.A. Agrachev,  Y. L. Sachkov,
\emph{Control
theory from the geometric viewpoint},
Encyclopedia of Mathematical Sciences, vol. {\bf{87}}, Springer-Verlag, Berlin, 2004, Control Theory and
Optimization, II.


 \bibitem[Au73]{Au73} L.  Auslander,
\emph{An exposition of the structure of solvmanifolds I. Algebraic theory,}
Bull. Amer. Math. Soc. {\bf{79}} (1973), no. 2, 227--261.



         \bibitem[BG]{BG}  C. Benson,  C. Gordon,
\emph{K${\ddot{a}}$hler structures on compact solvmanifolds},
Proc. Amer. Math. Soc. {\bf{108}}  (1990) 971--980.






\bibitem[BLD]{BLD}    C.  Bellettini,    E.   Le Donne
 {\it{Sets with constant normal in Carnot groups: properties and examples,}}
      Commentarii Mathematici Helvetici {\bf{96}}   (2021),   no. 1, 149 --  198.












         \bibitem[Ch96]{Ch96}
\emph{Chow, Richard (SGP-SING)
Groups quasi-isometric to complex hyperbolic space,}
Trans. Amer. Math. Soc. 348 (1996) {\bf{18}}1757--1769.

\bibitem[CG90]{CG90}  L.   Corwin, F.   Greenleaf,
 \emph{Representations of nilpotent Lie groups and their applications,
 Part I. Basic theory and examples,}
    Cambridge Studies in Advanced Mathematics, {\bf{18}}. Cambridge University Press, Cambridge, 1990.


\bibitem[CKLGO21]{CKLGO21}       M. Cowling, V.  Kivioja, E.  Le Donne, S.  Golo, and    A.  Ottazzi,
{\it{From homogeneous metric spaces to Lie groups,}}
arXiv preprint
arXiv:1705.09648v3 (2021).






\bibitem[CP17]{CP17}  M.  Carrasco Piaggio,
{\it{Orlicz spaces and the large scale geometry of Heintze groups,}}
 Math. Ann. {\bf{368}} (2017), no. 1--2, 433--481.




\bibitem[Da61]{Da61}{M. Day,}
{Fixed-point theorems for compact convex sets.}
\emph{Illinois J. Math.} {\bf{5}} (1961), 585--590.



  \bibitem[dC18]{dC18}{Y. Cornulier,}
 \emph{On the quasi-isometric classification of locally compact groups},
     New directions
in locally compact groups, London Math. Soc. Lecture Note Ser., vol. {\bf{447}}, Cambridge
Univ. Press, Cambridge, 2018, pp. 275–342. MR 3793294.


  \bibitem[Dy10]{D10}
T. Dymarz, \emph{Large scale geometry of certain solvable groups}, Geom.
  Funct. Anal. \textbf{19} (2010), no.6, 1650--1687.


        \bibitem[DFX]{DFX}  T. Dymarz, D. Fisher, X. Xie,
  {\it{A fiber Tukia theorem for nilpotent Lie groups,}   preprint.}


         \bibitem[DyX16]{DyX16}    T. Dymarz,  X. Xie,
\emph{Day's fixed point theorem, group cohomology and quasiisometric rigidity},
     Groups, Geometry and Dynamics {\bf{10}} (2016), Issue 4,  1121--1148.


\bibitem[EF10]{EF10}
A. Eskin and D.Fisher,
Quasi-isometric rigidity of solvable groups. \emph{Proceedings of the International Congress of Mathematicians.} Volume III, 1185–1208, Hindustan Book Agency, New Delhi, 2010.


\bibitem[EFW07]{EFW07}
A. Eskin, D. Fisher, and K. Whyte,
 Quasi-isometries and rigidity of solvable groups.
 \emph{Pure Appl. Math. Q.} {\bf 3} (2007), no. 4, part 1, 927--947.

\bibitem[EFW12]{EFW12}
A. Eskin, D. Fisher, and K. Whyte.
\newblock {Coarse differentiation of Quasi-isometries I: Spaces not
  Quasi-isometric to Cayley graphs}.
\newblock {\em Annals of Mathematics}, 176(1):221--260, 2012.

\bibitem[EFW13]{EFW13}
A. Eskin, D. Fisher, and K. Whyte.
\newblock {Coarse differentiation of quasi-isometries II: Rigidity for Sol and
  lamplighter groups}.
\newblock {\em Annals of Mathematics}, 177(3):869--910, 2013.



\bibitem[F69]{F69} H. Federer,
{ \it{Geometric measure theory,}}
     Die Grundlehren der mathematischen Wissenschaften, Band {\bf{153}},
         Springer-Verlag New York Inc., New York 1969.



 \bibitem[FLD21]{FLD21}
  K. F$\ddot{\text{a}}$ssler  and E.  Le Donne,
{\it{ On the quasi-isometric and biLipschitz classification
of 3d Riemannian Lie groups}}, Geometriae Dedicata   {\bf{210}} (2021), 27-42.


         \bibitem[FM99]{FM99}  B. Farb,   L. Mosher,
\emph{Quasi-isometric rigidity for the solvable Baumslag-Solitar groups. II.},
   Invent. Math. {\bf{137}} (1999), no. 3, 613--649.


         \bibitem[Fe20]{Fe20}  Ferragut, Tom
\emph{Geodesics and Visual boundary of Horospherical Products},   https://arxiv.org/abs/2009.04698.


 \bibitem[Fe22]{Fe22}  Ferragut, Tom
\emph{Geometric rigidity of quasi-isometries in horospherical products}, https://arxiv.org/abs/2211.04093.





   \bibitem[H74]{H74}
 E. Heintze,
{\it{On homogeneous manifolds of negative curvature,}}
  { Math. Ann.} {\bf{211}} (1974), 23--34.




  \bibitem[Hel78]{Hel78} S. Helgason,
  {\it{Differential Geometry, Lie Groups,  and Symmetric
   Spaces,}} Academic Press, New York, 1978.




         \bibitem[HS]{HS} G. Hochschild,  J-P. Serre,
\emph{Cohomology of Lie algebras},
Ann. of Math. (2) {\bf{57}} (1953), 591--603.


   \bibitem[HSi90]{HSi90}  W. Hebisch,  A. Sikora,
\emph{A smooth subadditive homogeneous norm on a homogeneous group},
   Studia Math.  Volume {\bf{96}}, Issue 3  (1990), 231--236.










        \bibitem[KLD17]{KLD17}   V. Kivioja,      E. Le Donne,
        {\it{Isometries of nilpotent metric groups,}}
 J. Ec. polytech. Math. {\bf{4}} (2017), 473--482.









   \bibitem[LDNG21]{LDNG21}  E. Le Donne,  S. Nicolussi Golo,
{\it{Metric Lie groups admitting dilations}},
Ark. Mat. {\bf{59}} (2021), no. 1, 125--163.






    \bibitem[LDPX22]{LDPX22} E. Le Donne,  G. Pallier,   X. Xie,
     {\emph{Rough similarity of left-invariant Riemannian metrics on some Lie groups,}}
     https://arxiv.org/abs/2208.06510


















    \bibitem[M62]{M62}   G.D. Mostow,
\emph{Homogeneous spaces with finite invariant measure},
Ann. of Math. (2) {\bf{75}} (1962), 17--37.


\bibitem[M70]{M70}   G.D. Mostow,
\emph{Representative functions on discrete groups and solvable arithmetic subgroups,}
Amer. J. Math. {\bf{92}} (1970), 1--32.



  \bibitem[P89]{P89}  P. Pansu,
\emph{Metriques de Carnot-Caratheodory et quasiisometries des espaces
symetriques de rang un,}
  Ann. of Math. (2) {\bf{129}}
(1989), no. 1, 1--60.


         \bibitem[P11a]{P11} I. Peng,
\emph{Large scale geometry of nilpotent-by-cyclic groups},
Geom. Funct. Anal. {\bf{21}} (2011), no. 4, 951--1000.



        \bibitem[P11b]{P11a}
I. Peng.
\newblock Coarse differentiation and quasi-isometries of a class of solvable
  {L}ie groups {I}.
\newblock {\em Geom. Topol.}, 15(4):1883--1925, 2011.

\bibitem[P11c]{P11b}
I. Peng.
\newblock Coarse differentiation and quasi-isometries of a class of solvable
  {L}ie groups {II}.
\newblock {\em Geom. Topol.}, 15(4):1927--1981, 2011.




          \bibitem[R02]{R02} V. Runde,
\emph{Lectures on amenability},
Lecture Notes in Mathematics, {\bf{1774}},   Springer-Verlag, Berlin, 2002.

 \bibitem[R91]{R91} W. Rudin,
        \emph{Functional analysis,}
Second edition. International Series in Pure and Applied Mathematics. McGraw-Hill, Inc., New York, 1991.


         \bibitem[S66]{S66}  U. Shukla,
\emph{A cohomology for Lie algebras},
  J. Math. Soc. Japan {\bf{18}} (1966), 275--289.


    \bibitem[S71]{S71}  T. Sato,
\emph{The derivations of the Lie algebras},
Tohoku Math. J. (2) {\bf{23}} (1971), 21--36.


\bibitem[SX12]{SX12}
    N.  Shanmugalingam  and X.  Xie,
    \emph{A rigidity property of some negatively curved solvable Lie groups},
       Comment. Math. Helv. {\bf{87}}  (2012), no. 4, 805--823.





         \bibitem[SY05]{SY05}  H. Sawai,   T.  Yamada,
\emph{Lattices on Benson-Gordon type solvable Lie groups},
Topology and its applications  {\bf {149}}   (2005), 85--95.


   \bibitem[S78]{S78}  D. Sullivan,
{On the ergodic theory at infinity of an arbitrary discrete group of hyperbolic motions,} In \emph{Riemann Surfaces and Related Topics}: Proceedings of the 1978 Stony Brook Conference (State Univ. New York, Stony Brook, N.Y., 1978), Ann. of Math. Stud., Princeton Univ. Press, Princeton, NJ, {\bf{97}} (1981), 465--496.




      \bibitem[T86]{T86}    P. Tukia,
\emph{On quasiconformal groups.}
    \emph{Journal d'Analyse Mathematique} {\bf{46}} (1986), 318--346.






\bibitem[W00]{W00}  B.  Wilking,
{\it{Rigidity of group actions on solvable Lie groups}},
  Math. Ann.  {\bf{317}} (2000),  195--237.










%
%
















%
%
%
%






     \end{thebibliography}
          \end{document}